\numberwithin{equation}{section}
\newtheorem{theorem}{Theorem}[section]
\newtheorem{lemma}[theorem]{Lemma}
\newtheorem{corollary}[theorem]{Corollary}
\newtheorem{remark}[theorem]{\bf{Remark}}
\newtheorem{assumption}[theorem]{Assumption}
\newtheorem{definition}[theorem]{Definition}
\theoremstyle{remark}
\theoremstyle{definition}
\newcommand\bL{\mathbb{L}}
\newcommand\bR{\mathbb{R}}
\newcommand\bH{\mathbb{H}}
\newcommand\bZ{\mathbb{Z}}
\newcommand\bE{\mathbb{E}}
\newcommand\bN{\mathbb{N}}
\newcommand\cB{\mathcal{B}}
\newcommand\cC{\mathcal{C}}
\newcommand\cF{\mathcal{F}}
\newcommand\cH{\mathcal{H}}
\newcommand\cL{\mathcal{L}}
\newcommand\cP{\mathcal{P}}
\newcommand\cR{\mathcal{R}}
\newcommand\cS{\mathcal{S}}
\newcommand\cM{\mathcal{M}}
\newcommand\cO{\mathcal{O}}
\newcommand\cbrk{\text{$]$\kern-.15em$]$}}
\newcommand\opar{\text{\,\raise.2ex\hbox{${\scriptstyle
|}$}\kern-.34em$($}}
\newcommand\cpar{\text{$)$\kern-.34em\raise.2ex\hbox{${\scriptstyle |}$}}\,}
\newcommand\ep{\varepsilon}
\begin{document}

\title[\texorpdfstring{$L_p$}{Lg}-regularity theory for semilinear SPDEs with time white noise]{
\texorpdfstring{$L_p$}{Lg}-regularity theory for semilinear stochastic partial differential equations with multiplicative white noise
}

\author{Beom-Seok Han}

\address{Department of Mathematics, Pohang University of Science and Technology, 77, Cheongam-ro, Nam-gu, Pohang, Gyeongbuk, 37673, Republic of Korea}

\email{hanbeom@postech.ac.kr}


\thanks{This work was supported by the National Research Foundation of Korea (NRF) grant
funded by the Korea government (MSIT) (No. NRF-2021R1C1C2007792) and the BK21 FOUR (Fostering Outstanding Universities for Research) funded by the Ministry of Education (MOE, Korea) and National Research Foundation of Korea (NRF)}


\subjclass[2020]{60H15, 35R60}

\keywords{Stochastic partial differential equation, 
Stochastic generalized Burgers' equation, 
Semilinear,  
Super-linear, 
Wiener process, 
$L_p$ regularity, 
H\"older regularity}

\begin{abstract}

We establish the $L_p$-regularity theory for a semilinear stochastic partial differential equation with multiplicative white noise:
\begin{equation*}
du = \left(a^{ij}u_{x^ix^j} + b^{i}u_{x^i} + cu + \bar b^{i}|u|^\lambda u_{x^i}\right)dt + \sigma^k(u)dw_t^k,\quad (\omega,t,x)\in\Omega\times(0,\infty)\times\bR^d
\end{equation*}
with $u(0,\cdot) = u_0$, where $\lambda>0$, the set $\{ w_t^k,k=1,2,\dots \}$ is a set of one-dimensional independent Wiener processes, and the function $u_0 = u_0(\omega,x)$ is nonnegative random initial data. The coefficients $a^{ij},b^i,c$ depend on $(\omega,t,x)$, and $\bar b^i$ depends on \linebreak $(\omega,t,x^1,\dots,x^{i-1},x^{i+1},\dots,x^d)$. The coefficients $a^{ij},b^i,c,\bar{b}^i$ are uniformly bounded and twice continuously differentiable. The leading coefficient $a$ satisfies the ellipticity condition. Depending on the diffusion coefficient $\sigma^k(u) = \sigma^k(\omega,t,x,u)$, we consider two cases:
\begin{enumerate}[(i)]
\item 
$\lambda\in(0,\infty)$ and $\sigma^k(u)$ has a Lipschitz continuity and linear growth in $u$.

\item
$\lambda,\lambda_0\in(0,1/d)$ and $\sigma^k(\omega,t,x,u) = \mu^k(\omega,t,x) |u|^{1+\lambda_0}$ with $\left| \sum_k|\mu^k(\omega,t,\cdot)|^2 \right|_{C(\bR^d)}<\infty$ for all $(\omega,t)\in\Omega\times[0,\infty)$.
\end{enumerate}
We obtain the existence, uniqueness, $L_p$ regularity, and H\"older regularity of the solution. It should be noted that each case has different regularity results. For example, in the case of $(i)$, for $\ep>0$
\begin{equation*}
u \in C^{1/2 - \ep,1 - \ep}_{t,x}([0,T]\times\bR^d)\quad \forall T<\infty,  
\end{equation*}
almost surely. On the other hand, in the case of $(ii)$, if $\lambda,\lambda_0\in(0,1/d)$, for $\ep>0$
\begin{equation*}
u \in C^{\frac{1-(\lambda d) \vee (\lambda_0 d)}{2} - \ep,1-(\lambda d) \vee (\lambda_0 d) - \ep}_{t,x}([0,T]\times\bR^d)\quad \forall T<\infty
\end{equation*}
almost surely.
It should be noted that  $\lambda$ can be any positive number and that the solution regularity is independent of the nonlinear terms in case $(i)$. In case $(ii)$, however, $\lambda$ and $\lambda_0$ should satisfy $\lambda,\lambda_0\in(0,1/d)$, and the regularities of the solution are affected by $\lambda,\lambda_0$ and $d$. This difference is due to the need to use a different proof method for the super-linear diffusion coefficient $\sigma^k(u)$.

\end{abstract}

\maketitle

\section{Introduction}

The purpose of this paper is to obtain the uniqueness, existence, $L_p$-regularity, and H\"older regularity of a solution to a semilinear stochastic partial differential equation with the nonlinear diffusion coefficients $\sigma^k(u)$:
\begin{equation}
\label{main_equation}
du = \left(a^{ij}u_{x^ix^j} + b^i u_{x^i} + cu + \bar{b}^i |u|^{\lambda} u_{x^i}\right)dt + \sigma^k(u)dw_t^k,\quad (\omega,t,x)\in\Omega\times(0,\infty)\times\bR^d
\end{equation}
with $\quad u(0,\cdot) = u_0(\cdot)$, where $\lambda > 0$ and $\{w_t^k:k\in\bN\}$ is a set of one-dimensional independent Wiener processes. The coefficients $a^{ij}(\omega,t,x)$, $b^i(\omega,t,x)$, and $c(\omega,t,x)$ are $\cP\times\cR(\bR^d)$-measurable, and $\bar b^i = \bar b^i(\omega,t,x^1,\dots,x^{i-1},x^{i+1},x^d)$ is $\cP\times\cB\left(\bR^{d-1}\right)$-measurable. The coefficients $a^{ij},b^i,c,$ and $\bar b^i$ are uniformly bounded and twice continuously differentiable. The coefficient $\sigma^k(u) = \sigma^k(\omega,t,x,u)$ is $\cP\times\cB(\bR^d)\times\cB(\bR)$-measurable. Depending on the nonlinearity types of $\sigma^k(u)$, different conditions on $\lambda$ are required.
\begin{enumerate}[(i)]
\item 
If $\sigma^k(u)$ has a Lipschitz continuity and linear growth in $u$, the $L_p$-regularity results are obtained for $\lambda\in(0,\infty)$.

\item
If $\sigma^k(u) = \mu^k(\omega,t,x,u) |u|^{1+\lambda_0}$ with $\left| \sum_k|\mu^k(t,\cdot)|^2 \right|_{C(\bR^d)}<\infty$ for all $(\omega,t)\in\Omega\times[0,\infty)$, the $L_p$-regularity results are obtained for $\lambda,\lambda_0\in(1/d)$.

\end{enumerate}

To motivate this problem, we provide historical notes. 
Consider a stochastic semilinear partial differential equation
\begin{equation}
\label{main_equation_general_form}
u_t = \cL u + (g(u))_{x}  + \sigma(u)\dot W,\quad (t,x)\in(0,\infty)\times\cO\,; \quad u(0,\cdot) = u_0(\cdot),
\end{equation}
where $\cL$ is a second order differential operator, $g(u)$ and $\sigma(u)$ are nonlinear functions in $u$, $\dot W$ is white noise, and $\cO\subset \bR^d$. 

One of the most well-known examples of \eqref{main_equation_general_form} is a generalized Burgers' equation. \cite{crighton1992modern,ladyzhenskaia1968linear,dix1996nonuniqueness,bekiranov1996initial,tersenov2010generalized} address the case of a deterministic equation (e.g., $g(u) = |u|^{1+\lambda}$, $\sigma(u) = 0$). In particular, the uniqueness and existence of the local solution are proven in \cite{dix1996nonuniqueness}, and the unique existence of a global solution with small initial data is obtained in \cite{bekiranov1996initial}. In \cite{tersenov2010generalized}, the unique solvability is obtained under dominating conditions on $g$. As a stochastic counterpart, stochastic semilinear partial differential equations are studied  in \cite{bertini1994stochastic,da1994stochastic,da1995stochastic,gyongy1998existence,gyongy1999stochastic,gyongy2000lp,leon2000stochastic,englezos2013stochastic,catuogno2014strong,lewis2018stochastic}. The uniqueness and existence of the $L_p$-valued solution on the unit interval are proven in \cite{gyongy1999stochastic}. Additionally, the results are extended to a bounded convex domain with a smooth boundary on $\bR^d$ in \cite{gyongy2000lp}. For more information, see \cite{rockner2006kolmogorov} and the references therein. 

As another example of \eqref{main_equation_general_form}, we consider a stochastic partial differential equation driven by multiplicative noise:
\begin{equation}
\label{second_order_SPDE_example_1}
u_t = \cL u + \xi|u|^{\gamma} \dot W,\quad (\omega,t,x)\in\Omega\times(0,\infty)\times\cO \,;\quad u(0,\cdot) = u_0,
\end{equation}
where $\gamma>0$, $\dot W$ is the space-time white noise, and $\cO$ is either $\bR$ or $(0,1)$; see \cite{Kijung,kry1999analytic,krylov1997result,krylov1997spde,mueller1991long,mueller2014nonuniqueness,mytnik1998weak,mytnik2011pathwise,walsh1986introduction,Xiong,choi2021regularity,han2019boundary}. \cite{kry1999analytic,krylov1997result,krylov1997spde,mueller1991long,choi2021regularity,han2019boundary} contain especially relevant results related to stochastic partial differential equations with super-linear diffusion coefficients  (e.g., $\gamma > 1$). In \cite{mueller1991long,kry1999analytic,han2019boundary}, equation \eqref{second_order_SPDE_example_1} with $\gamma\in [1,3/2)$ is considered on a bounded domain or real line. In \cite{mueller1991long}, the long-time existence of a mild solution is proven for equation \eqref{second_order_SPDE_example_1}, where  $\cL = \Delta$, $\xi = 1$, $\cO = (0,1)$, and $u(0) = u_0$ is deterministic, continuous, and nonnegative. On the other hand, in \cite[Section 8.4]{kry1999analytic}, the existence of a solution in $L_p(\bR)$ spaces is obtained with a second-order differential operator $\cL = aD^2 + bD + c$, where the bounded coefficients $a,b,c,\xi$ depend on $(\omega,t,x)$. Additionally, in \cite{han2019boundary}, the regularity and boundary behavior of a solution are achieved with $\cO = (0,1)$ and $\cL = aD^2+bD+c$, where $a,b,c,\xi$ are random as well as space and time-dependent coefficients, and the nonnegative random initial data $u(0) = u_0$. In the case of $\gamma > 3/2$, \cite{mueller2000critical} shows that if $\cL = \Delta$, $\xi = 1$, and the initial data $u_0$ is nonnegative, nontrivial, and vanishing at the endpoints, then there is a positive probability that a solution blows up in finite time.

Therefore, the aim of this paper is to obtain the uniqueness, existence, $L_p$ regularity, and H\"older regularity of the \textit{strong} solution to equation \eqref{main_equation} by establishing the $L_p$ regularity theory. The first novelty of our result is the initial data $u_0$ and that the coefficients $a,b,c,\bar b$ are assumed to be \textit{random} functions. In other words, we establish the $L_p$ regularity theory for \eqref{main_equation} with the {\it random} initial data $u_0(\omega, x)$ and the {\it random} coefficients $a^{ij}(\omega,t,x)$, $b^i(\omega,t,x)$, $c(\omega,t, x)$, $\bar{b}^i(\omega,t,\bar x^i)$, and $\sigma(\omega,t,x,u)$, where $\bar x^i = (x^1,\dots,x^{i-1},x^{i+1},\dots,x^d)$. Employing randomness is one of the notable  advantages of Krylov's $L_p$ theory when compared with the results of the stochastic generalized Burgers' equation, where the coefficients are required to be constant.

Second, the relations between the nonlinear terms and the regularity of the solution are suggested. It should be noted that if $\sigma^k(u)$ has a Lipschitz continuity and linear growth in $u$, the regularity of the solution is independent of the nonlinear terms. For example, if $\sigma^k(u)$ has a Lipschitz continuity and linear growth in $u$, for $\lambda\in (0,\infty)$ and nonnegative initial data $u_0\in U_p^1\cap L_{p(1+\lambda)}(\Omega\times\bR^d)$ ( see Definition \ref{definition_of_function_spaces} for the definition of $U_p^1$), equation \eqref{main_equation} has a unique solution $u\in\cH_{p,loc}^1$ (see Definition \ref{definition_of_sol_space}). In addition, for small $\ep>0$
\begin{equation*}
\sup_{t\leq T}|u(\omega,t,\cdot)|_{C^{1-\ep}(\bR^d)} + \sup_{x\in\bR^d}|u(\omega,\cdot,x)|_{C^{1/2-\ep}([0,T])}<\infty\quad \forall T<\infty.
\end{equation*} 
almost surely.
On the other hand, if the diffusion coefficient $\sigma^k(u)$ is super-linear in $u$, the regularity of the solution is affected by the nonlinear terms $|u|^\lambda u_{x^i}$ and $\sigma^k(u)$. As an example, if we consider that $\sigma^k(u) = \mu^k |u|^{1+\lambda_0}$ with $\left|\sum_k |\mu^k(\omega,t,\cdot)|^2\right|_{C(\bR^d)}<\infty$ almost surely, then for $\lambda,\lambda_0\in(0,1/d)$, and nonnegative initial data $u_0\in U_p^{1-(\lambda d)\vee (\lambda_0 d)}\cap L_1(\Omega\times\bR^d)$, equation \eqref{main_equation} has a unique solution $u\in\cH_{p,loc}^{1-(\lambda d)\vee (\lambda_0 d)}$. Furthermore, for small $\ep>0$
\begin{equation*}
\sup_{t \leq T}|u(\omega,t,\cdot)|_{C^{1 - (\lambda d) \vee (\lambda_0 d) - \ep}(\bR^d)} + \sup_{x\in\bR^d}|u(\omega,\cdot,x)|_{C^{\frac{1-(\lambda d) \vee (\lambda_0 d)}{2}-\ep}([0,T])}<\infty \quad \forall T<\infty
\end{equation*}
almost surely. It should be noted that the parameters $\lambda,\lambda_0$ and $d$ have an effect on the solution regularity if $\sigma^k(u)$ is a super-linear function in $u$.  We discuss the regularity and its dependency in Remarks \ref{remark:condition_on_lambda_1}, \ref{remark:condition_on_lambda_2}, and \ref{remark:condition_on_lambda_3}.

Finally, we achieve a sufficient condition for the unique solvability in $L_p$ spaces. For example, if $\sigma^k(u)$ has a Lipschitz continuity and linear growth in $u$, $\lambda$ can be any positive real number. 
However, if $\sigma^k(u) = \mu^k(\omega,t,x)|u|^{1+\lambda_0}$ with $\left| \sum_k|\mu^k(\omega,t,\cdot)|^2 \right|_{C(\bR^d)}<\infty$ for all $(\omega,t)\in\Omega\times[0,\infty)$, algebraic conditions on $\lambda$ and $\lambda_0$ are required: $\lambda,\lambda_0\in (1/d)$. Indeed, in the proof of any cases, we need an $L_q$ (for some $q\geq1$) bound of the solution to address the nonlinear terms. If $\sigma^k(u)$ has a Lipschitz continuity and linear growth in $u$, we apply It\^o's formula to obtain the $L_q$ $(q>1)$ bound and the nonlinear term vanishes using the fundamental theorem of calculus since $u^\lambda u_{x^i} = \frac{1}{1+\lambda}\left( u^{1+\lambda} \right)_{x^i}$. Therefore, only $\lambda>0$ is needed. In the case of $\sigma(u) = |u|^{1+\lambda_0}$ with $\lambda_0>0$, however, we should employ a uniform $L_1(\bR^d)$ bound for the solution to control both of the nonlinear terms, $u^\lambda u_x$ and $|u|^{1+\lambda_0}$. Since we interpret the nonlinear terms as
$$ u^\lambda u_{x} = \frac{1}{1+\lambda} \left( |u|^{1+\lambda} \right)_{x} = \frac{1}{1+\lambda} \left( |u|^{\lambda}\cdot |u| \right)_{x}\quad\text{and}\quad |u|^{1+\lambda_0} = |u|^{\lambda_0}\cdot |u|,
$$
if $\lambda,\lambda_0\in(0,1/d)$, then the surplus terms $|u|^{\lambda}$ and $|u|^{\lambda_0}$ can be controlled using the uniform $L_1(\bR^d)$ bound of the solution; see Remarks \ref{remark:condition_on_lambda_2} and \ref{remark:condition_on_lambda_3}.

This paper is organized as follows: Section \ref{sec:preliminaries} introduces the preliminary definitions and properties. In Section \ref{sec:main_results}, we provide the existence, uniqueness, $L_p$-regularity, and H\"older regularity of a solution to equation \eqref{main_equation}. The proofs of the main results are contained in Sections \ref{proof_of_theorem_white_noise_in_time_large_lambda} and \ref{proof_of_theorem_white_noise_in_time_small_lambda}. The basic strategy of the proof is to control the solution $u$ using the initial data $u_0$. In detail, the solution estimate required to prove the existence of a global solution varies depending on the nonlinearity of the diffusion coefficient $\sigma(u)$. If $\sigma(u)$ has a Lipschitz continuity and linear growth in $u$, we obtain the $L_{p(1+\lambda)}$ bound of the solution by employing Ito's formula and the fundamental theorem of calculus. With the help of the $L_{p(1+\lambda)}$ bound of the solution, we can show that there exists a global solution by contradiction. In the case that $\sigma(u)$ is a super-linear function in $u$ (e.g., $\sigma(u) = |u|^{1+\lambda_0}$), we prove that the uniform $L_1$ norm of the solution is controlled by the initial data $u_0$. Using the uniform $L_1$ bound of the solutions, we can extend the local existence time.


We finish introduction by providing the notations. Let $\bN$ and $\bR$ denote the set of natural numbers and real numbers, respectively. We use $:=$ to denote a definition.  For a real-valued function $f$, we set
\begin{equation*}
f^+:= \frac{f+|f|}{2},\quad f^- = -\frac{f-|f|}{2}
\end{equation*} For a normed space $F$, a measure space $(X,\mathcal{M},\mu)$, and $p\in [1,\infty)$, a space $L_{p}(X,\cM,\mu;F)$ is a set of $F$-valued $\mathcal{M}^{\mu}$-measurable functions such that
\begin{equation}
\label{def_norm}
\| u \|_{L_{p}(X,\cM,\mu;F)} := \left( \int_{X} \| u(x) \|_{F}^{p}\mu(dx)\right)^{1/p}<\infty.
\end{equation}
A set $\mathcal{M}^{\mu}$ is the completion of $\cM$ with respect to the measure $\mu$.
For $\alpha\in(0,1]$ and $T>0$, a set $C^{\alpha}([0,T];F)$ is the set of $F$-valued continuous functions $u$ such that
$$ |u|_{C^{\alpha}([0,T];F)}:=\sup_{t\in[0,T]}|u(t)|_{F}+\sup_{\substack{s,t\in[0,T], \\ s\neq t}}\frac{|u(t)-u(s)|_F}{|t-s|^{\alpha}}<\infty.$$
For $a,b\in \bR$, set
$a \wedge b := \min\{a,b\}$, and $a \vee b := \max\{a,b\}$.
Let $\cS = \cS(\bR^d)$ denote the set of Schwartz functions on $\bR^d$. The Einstein's summation convention with respect to $i,j$, and $k$ is assumed. A generic constant $N$ can be different from line to line. The constant $N=N(a,b,\ldots)$ denotes that $N$ depends only on $a,b,\ldots$. For functions depending on $\omega$, $t$, and $x$, the argument $\omega \in \Omega$ is usually omitted.

\section{Preliminaries} 
\label{sec:preliminaries}

This section reviews the definitions and properties of stochastic Banach spaces. The space $\cH_p^\gamma(\tau)$ is used as solution spaces. Gr\"onwall's inequality and the H\"older embedding theorems for $\cH_p^\gamma(\tau)$ are employed to show the unique solvability and solution regularity. For more information, see \cite{kry1999analytic,grafakos2009modern,krylov2008lectures}.

\begin{definition}[Bessel potential space]
Let $p>1$ and $\gamma \in \mathbb{R}$. The space $H_p^\gamma=H_p^\gamma(\bR^d)$ is the set of all tempered distributions $u$ on $\bR^d$ satisfying

$$ \| u \|_{H_p^\gamma} := \left\| (1-\Delta)^{\gamma/2} u\right\|_{L_p} = \left\| \cF^{-1}\left[ \left(1+|\xi|^2\right)^{\gamma/2}\cF(u)(\xi)\right]\right\|_{L_p}<\infty.
$$
Similarly, $H_p^\gamma(\ell_2) = H_p^\gamma(\bR^d;\ell_2)$ is a space of $\ell_2$-valued functions $g=(g^1,g^2,\cdots)$ satisfying
$$ \|g\|_{H_{p}^\gamma(\ell_2)}:= \left\| \left| \left(1-\Delta\right)^{\gamma/2} g\right|_{l_2}\right\|_{L_p} = \left\| \left|\cF^{-1}\left[ \left(1+|\xi|^2\right)^{\gamma/2}\cF(g)(\xi)\right]\right|_{\ell_2} \right\|_{L_p}
< \infty. 
$$
For $\gamma = 0$, we set $L_p := H_p^0$ and $L_p(\ell_2) := H_p^0(\ell_2)$.
\end{definition}

\begin{remark} \label{Kernel}
It is well-known that the Bessel potential operator $(1-\Delta)^{-\gamma/2}$  has a representation for $\gamma\in(0,\infty)$. Precisely, for $\gamma\in (0,\infty)$ and $u\in \cS$, we have
\begin{equation*}
(1-\Delta)^{-\gamma/2}u(x)=\int_{\bR^d}R_{\gamma}(x-y)u(y)dy,
\end{equation*}
where 
\begin{equation*} 
|R_\gamma(x)| \leq N(\gamma,d)\left(e^{-\frac{|x|}{2}}1_{|x|\geq2} + A_\gamma(x)1_{|x|<2}\right)
\end{equation*}
and
\begin{equation*}
\begin{aligned}
A_{\gamma}(x):=
\begin{cases}
|x|^{\gamma-d} + 1 + O(|x|^{\gamma-d+2}) \quad &\mbox{if} \quad 0<\gamma<d,\\ 
\log(2/|x|) + 1 + O(|x|^{2}) \quad &\mbox{if} \quad \gamma=d,\\ 
1 + O(|x|^{\gamma-d}) \quad &\mbox{if} \quad \gamma>d.
\end{cases}
\end{aligned}
\end{equation*}
For more information, see \cite[Proposition 1.2.5.]{grafakos2009modern}.

\end{remark}

Below, we introduce the space of the point-wise multipliers for $H_p^\gamma$.

\begin{definition}
\label{def_pointwise_multiplier}
Fix $\gamma\in\bR$ and $\alpha\in[0,1)$ such that $\alpha = 0$ if $\gamma\in\bZ$ and $\alpha>0$ if $|\gamma|+\alpha$ is not an integer. Define
\begin{equation*}
\begin{aligned}
B^{|\gamma|+\alpha} = 
\begin{cases}
B(\bR^d) &\quad\text{if } \gamma = 0, \\
C^{|\gamma|-1,1}(\bR^d) &\quad\text{if $\gamma$ is a nonzero integer}, \\
C^{|\gamma|+\alpha}(\bR^d) &\quad\text{otherwise};
\end{cases}
\end{aligned}
\end{equation*}
where $B(\bR^d)$ is the space of the bounded Borel functions on $\bR^d$, $C^{|\gamma|-1,1}(\bR^d)$ is the space of $|\gamma|-1$ times the continuously differentiable functions whose derivatives of the $(|\gamma|-1)$-th order derivative are Lipschitz continuous, and $C^{|\gamma|+\alpha}$ is real-valued H\"older spaces. The space $B(\ell_2)$ denotes a function space with hte $\ell_2$-valued functions instead of the real-valued function spaces.

\end{definition}

Next, we gather the properties of the Bessel potential space $H_p^\gamma$.

\begin{lemma}
\label{prop_of_bessel_space} 
Let $p>1$ and $\gamma \in \bR$. 
\begin{enumerate}[(i)]
\item 
\label{dense_subset_bessel_potential}
The space  $C_c^\infty(\bR^d)$ is dense in $H_{p}^{\gamma}$. 

\item
\label{sobolev-embedding} 
Let $\gamma - d/p = n+\nu$ for some $n=0,1,\cdots$ and $\nu\in(0,1]$. Then, for any  $i\in\{ 0,1,\cdots,n \}$, we have
\begin{equation} 
\label{holder embedding}
\left| D^i u \right|_{C(\bR^d)} + \left[D^n u\right]_{\cC^\nu(\bR^d)} \leq N \| u \|_{H_{p}^\gamma},
\end{equation}
where $N = N(d,p,\gamma)$ and $\cC^\nu$ is a Zygmund space.

\item
\label{bounded_operator}
The operator $D_i:H_p^{\gamma}\to H_p^{\gamma+1}$ is bounded. Moreover, for any $u\in H_p^{\gamma+1}$,
$$ \left\| D^i u \right\|_{H_p^\gamma} \leq N\| u \|_{H_p^{\gamma+1}},
$$
where $N = N(d,p,\gamma)$.

\item
\label{norm_bounded}
Let $\mu\leq\gamma$ and $u\in H_p^\gamma$. Then, $u\in H_p^\mu$ and 
$$ \| u \|_{H_p^\mu} \leq \| u \|_{H_p^\gamma}. 
$$

\item 
\label{iso} (isometry). For any $\mu,\gamma\in\bR$, the operator $(1-\Delta)^{\mu/2}:H_p^\gamma\to H_p^{\gamma-\mu}$ is an isometry.

\item
\label{multi_ineq} (multiplicative inequality). Let 
\begin{equation*} \label{condition_of_constants_interpolation}
\begin{gathered}
\ep\in[0,1],\quad p_i\in(1,\infty),\quad\gamma_i\in \bR,\quad i=0,1,\\
\gamma=\ep\gamma_0+(1-\ep)\gamma_1,\quad1/p=\ep/p_0+(1-\ep)/p_1.
\end{gathered}
\end{equation*}
Then, we have
\begin{equation}
\label{eq:multi_ineq 1}
\|u\|_{H^\gamma_{p}} \leq \|u\|^{\ep}_{H^{\gamma_0}_{p_0}}\|u\|^{1-\ep}_{H^{\gamma_1}_{p_1}}.
\end{equation}
In particular, for any $\delta>0$,
\begin{equation}
\label{eq:multi_ineq 2}
\|u\|_{H^\gamma_{p}} \leq \delta\|u\|_{H^{\gamma_0}_{p_0}} + \delta^{-\frac{\ep}{1-\ep}}\|u\|_{H^{\gamma_1}_{p_1}}.
\end{equation}

\item \label{pointwise_multiplier}
Let $u\in H_p^\gamma$. Then, we have
\begin{equation*}
\| au \|_{H_p^\gamma} \leq N(d,\gamma,p)\| a \|_{B^{|\gamma|+\alpha}}\| u \|_{H_p^\gamma}\quad\text{and}\quad\| bu \|_{H_p^\gamma(\ell_2)} \leq N(d,\gamma,p)\| b \|_{B^{|\gamma|+\alpha}(\ell_2)}\| u \|_{H_p^\gamma},
\end{equation*}
where $B^{|\gamma|+\alpha}$ and $B^{|\gamma|+\alpha}(\ell_2)$ are introduced in Definition \ref{def_pointwise_multiplier}.
\end{enumerate}
\end{lemma}
\begin{proof}
These are well-known results. For example, for \eqref{dense_subset_bessel_potential}-\eqref{multi_ineq}, see Theorem 13.3.7 (i), Theorem 13.8.1, Theorem 13.3.10, Corollary 13.3.9, Theorem 13.3.7 (ii), and Exercise 13.3.20 of \cite{krylov2008lectures}, respectively. To obtain \eqref{eq:multi_ineq 2}, apply Young's inequality to \eqref{eq:multi_ineq 1}. For \eqref{pointwise_multiplier}, see  \cite[Lemma 5.2]{kry1999analytic}.
\end{proof}

\vspace{2mm}

Next we introduce stochastic Banach spaces. Let $(\Omega, \cF, P)$ be a complete probability space with a filtration $\{\cF_t\}_{t\geq0}$ satisfying the usual conditions. A set $\cP$ denotes the predictable $\sigma$-field related to $\cF_t$.

We introduce stochastic Banach spaces $\bH_p^{\gamma}(\tau)$ and $\cH_p^{\gamma}(\tau)$. For more detail, see \cite[Section 3]{kry1999analytic}.

\begin{definition}[Stochastic Banach spaces]
\label{definition_of_function_spaces}

Let $\tau\leq T$ be a bounded stopping time, $p>1$ and $\gamma\in\bR$. For $\opar0,\tau\cbrk:=\{ (\omega,t):0<t\leq \tau(\omega) \},$ define
\begin{gather*}
\mathbb{H}_{p}^{\gamma}(\tau) := L_p\left(\opar0,\tau\cbrk, \mathcal{P}, dP \times dt ; H_{p}^\gamma\right),\quad \mathbb{H}_{p}^{\gamma}(\tau,\ell_2) := L_p\left(\opar0,\tau\cbrk,\mathcal{P}, dP \times dt;H_{p}^\gamma(\ell_2)\right).
\end{gather*}
When $\gamma = 0$, we write $\mathbb{L}_{p}(\tau): = \bH_p^0(\tau)$ and $\mathbb{L}_{p}(\tau,\ell_2): = \bH_p^0(\tau,\ell_2)$.
For the initial data, set
$$U_{p}^{\gamma} :=  L_p\left(\Omega,\cF_0, dP ; H_{p}^{\gamma-2/p}\right).
$$
The norm of each space is defined as in \eqref{def_norm}. 
\end{definition}

\begin{definition}[Solution spaces] 
\label{definition_of_sol_space}
Let $\tau\leq T$ be a bounded stopping time and $p\geq2$. 

\begin{enumerate}[(i)]
\item 
For $u\in \bH_p^{\gamma+2}(\tau)$, we write $u\in\cH^{\gamma+2}_p(\tau)$ if there exist $u_0\in U_{p}^{\gamma+2}$ and  $(f,g)\in
\bH_{p}^{\gamma}(\tau)\times\bH_{p}^{\gamma+1}(\tau,\ell_2)$ such that
\begin{equation*}
du = fdt+\sum_{k=1}^{\infty} g^k dw_t^k,\quad   t\in (0, \tau]\,; \quad u(0,\cdot) = u_0
\end{equation*}
in the sense of distributions. In other words, for any $\phi\in \cS$, the equality
\begin{equation} \label{def_of_sol}
(u(t,\cdot),\phi) = (u_0,\phi) + \int_0^t(f(s,\cdot),\phi)ds + \sum_{k=1}^{\infty} \int_0^t(g^k(s,\cdot),\phi)dw_s^k
\end{equation}
holds for all $t\in [0,\tau]$ almost surely. Here, $\left\{w_t^k:k\in \bN\right\}$ is a set of one-dimensional independent Wiener processes. 

\item

The norm of the function space $\cH_{p}^{\gamma+2}(\tau)$ is defined as
\begin{equation}
\label{def_of_sol_norm}
\| u \|_{\cH_{p}^{\gamma+2}(\tau)} :=  \| u \|_{\mathbb{H}_{p}^{\gamma+2}(\tau)} + \| f \|_{\mathbb{H}_{p}^{\gamma}(\tau)} + \| g \|_{\mathbb{H}_{p}^{\gamma+1}(\tau,l_2)} + \| u_0 \|_{U_{p}^{\gamma+2}}.
\end{equation}

\item \label{def_of_local_sol_space}
For a stopping time $\tau \in [0,\infty]$, we write $u \in \cH_{p,loc}^{\gamma+2}(\tau)$ if there exists a sequence of bounded stopping times $\{ \tau_n : n\in\bN \}$ such that $\tau_n\uparrow \tau$ (a.s.) as $n\to\infty$ and $u\in \cH_{p}^{\gamma+2}(\tau_n)$ for each $n$. We write $u = v$ in $\cH_{p,loc}^{\gamma+2}(\tau)$ if there exists a sequence of bounded stopping times $\{ \tau_n : n\in\bN \}$ such that $\tau_n\uparrow\tau$ (a.s.) as $n\to\infty$ and $u = v$ in $\cH_{p}^{\gamma+2}(\tau_n)$ for each $n$. We omit $\tau$ if $\tau = \infty$. In other words,  $\cH_{p,loc}^{\gamma+2}=\cH_{p,loc}^{\gamma+2}(\infty)$.

\item If $\gamma+2 = 0$, we use $\cL$ instead of $\cH$. For example, $\cL_p(\tau) := \cH_p^0(\tau)$.

\end{enumerate}

\end{definition}

\begin{remark}
Let $p\geq 2$ and $\gamma\in \bR$. For any $g\in \bH^{\gamma+1}_{p}(\tau,\ell_2)$,  the series of stochastic integrals in \eqref{def_of_sol} converges uniformly in $t$ in probability on $[0,\tau \wedge T]$ for any $T$. Therefore, $(u(t,\cdot),\phi)$ is continuous in $t$ (See, e.g., \cite[Remark 3.2]{kry1999analytic}).
\end{remark}

Now, we introduce the properties of the solution space $\cH_p^{\gamma+2}(\tau)$.

\begin{theorem} 
\label{embedding}
Let $\tau\leq T$ be a bounded stopping time.
\begin{enumerate}[(i)]

\item \label{completeness}
For any $p\geq2$, $\gamma\in\bR$, $\cH_p^{\gamma+2}(\tau)$ is a Banach space with the norm $\| \cdot \|_{\cH_p^{\gamma+2}(\tau)}$.

\item \label{large-p-embedding} 
If $p>2$, $\gamma\in\bR$, and $1/p < \alpha < \beta < 1/2$, then for any  $u\in\cH_{p}^{\gamma+2}(\tau)$, we have $u\in C^{\alpha-1/p}\left([0,\tau];H_{p}^{\gamma+2-2\beta}\right)$ (a.s.) and
\begin{equation} 
\label{solution_embedding}
\mathbb{E}| u |^p_{C^{\alpha-1/p}\left([0,\tau];H_{p}^{\gamma + 2 - 2\beta} \right)} \leq N(d,p,\gamma,\alpha,\beta,T)\| u \|^p_{\cH_{p}^{\gamma+2}(\tau)}.
\end{equation}

\item \label{gronwall_type_ineq} 
Let $p > 2$, $\gamma\in\bR$, and $u\in\cH_p^{\gamma+2}(\tau)$. If there exist $N_0,N_1\in(0,\infty),$ and $\gamma_0 \in (0, \gamma)$ such that
\begin{equation}
\label{condition of gronwall type ineq}
\| u \|_{\cH_p^{\gamma+2}(\tau\wedge t)}^p \leq N_0 + N_1 \| u \|_{\bH_p^{\gamma_0+2}(\tau\wedge t)}^p
\end{equation}
for all $t\in(0,T)$, then we have
\begin{equation} \label{modified_Gronwall}
\| u \|_{\cH_p^{\gamma+2}(\tau\wedge T)}^p \leq N_0N,
\end{equation}
where $N =  N(N_1,d,p,\gamma,T)$.

\end{enumerate}
\end{theorem}
\begin{proof}
For \eqref{completeness} and \eqref{large-p-embedding}, see Theorems 3.7 and 7.2 of \cite{kry1999analytic}, respectively. Now we show \eqref{gronwall_type_ineq}. Given $\gamma_0+2 < \gamma+2$, by Lemma \ref{prop_of_bessel_space} \eqref{norm_bounded}, we have
\begin{equation}
\label{applying lemma 2.4.4}
\| u \|_{\bH^{\gamma_0+2}_p(\tau\wedge t)}^p \leq \| u \|_{\bH^{\gamma+2-\ep_0}_p(\tau\wedge t)}^p 
\end{equation}
for small $\ep_0>0$. Thus, by applying \eqref{applying lemma 2.4.4}, \eqref{eq:multi_ineq 2}, and \eqref{def_of_sol_norm} to \eqref{condition of gronwall type ineq}, we obtain
\begin{equation*}
\begin{aligned}
\|u\|_{\cH_p^{\gamma+2}(\tau\wedge t)}^p 
&\leq N_0 +  N_1 \| u  \|_{\bH^{\gamma_0+2}_p(\tau\wedge t)}^p \\
&\leq N_0 +  N_1 \| u  \|_{\bH^{\gamma+2-\ep_0}_p(\tau\wedge t)}^p \\
&\leq N_0 +  \frac{1}{2} \bE\int_0^{\tau\wedge t} \| u(s,\cdot) \|_{H^{\gamma+2}_p}^p ds + N \bE\int_0^{\tau\wedge t} \| u(s,\cdot) \|_{H^{\gamma}_p}^p ds\\
&\leq N_0 + \frac{1}{2}  \| u \|_{\cH^{\gamma+2}_p(\tau\wedge t)}^p + N \int_0^{t} \bE \sup_{r\leq \tau\wedge s}\| u(r,\cdot) \|_{H^{\gamma}_p}^p ds, \\
\end{aligned}
\end{equation*}
where $N = N(N_1, d,p,\gamma)$.
By subtraction and \eqref{solution_embedding}, we derive
\begin{equation*}
\begin{aligned}
\|u\|_{\cH_p^{\gamma+2}(\tau\wedge t)}^p
&\leq 2N_0 + 2N \int_0^{t} \| u \|_{\cH^{\gamma+2}_p(\tau\wedge  s)}^p ds,
\end{aligned}
\end{equation*}
where $N = N(N_1,d,p,\gamma)$. Using Gr\"onwall's inequality, we obtain \eqref{modified_Gronwall}. The theorem is proved.
\end{proof}

\begin{corollary} 
\label{embedding_corollary}

Let $\tau\leq T$ be a bounded stopping time.
Suppose $\kappa\in [0,1)$, $p\in(2,\infty)$, and $\alpha,\beta\in(0,\infty)$ satisfy
\begin{equation} \label{condition_for_alpha_beta}
\frac{1}{p} < \alpha < \beta <  \frac{1}{2}\left(1-\kappa-\frac{d}{p}\right).
\end{equation}
Then, we have
\begin{equation} \label{sol_embedding}
\mathbb{E} \|u\|^p_{C^{\alpha-1/p}([0,\tau];C^{1-\kappa-2\beta-d/p} )}\leq N\|u\|_{\cH^{1-\kappa}_{p} (\tau)}^p,
\end{equation}
where $N = N(d,p,\alpha,\beta,\kappa,T)$.

\end{corollary}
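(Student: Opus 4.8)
The plan is to derive \eqref{sol_embedding} by composing two embeddings already available in the excerpt: the time-regularity embedding for the solution space, Theorem \ref{embedding} \eqref{large-p-embedding}, and the spatial Sobolev--H\"older embedding for Bessel potential spaces, Lemma \ref{prop_of_bessel_space} \eqref{sobolev-embedding}. In other words, Theorem \ref{embedding} \eqref{large-p-embedding} will turn membership in $\cH_p^{1-\kappa}(\tau)$ into H\"older-in-time regularity with values in a Bessel potential space of sufficiently high order, and then Lemma \ref{prop_of_bessel_space} \eqref{sobolev-embedding} will upgrade the spatial target from $H_p^{1-\kappa-2\beta}$ to the classical H\"older space $C^{1-\kappa-2\beta-d/p}$.

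First I would apply Theorem \ref{embedding} \eqref{large-p-embedding} with $\gamma+2=1-\kappa$, i.e. $\gamma=-1-\kappa$. Its hypothesis $1/p<\alpha<\beta<1/2$ is implied by \eqref{condition_for_alpha_beta}, since $\beta<\tfrac12(1-\kappa-d/p)\le\tfrac12$ using $\kappa\ge0$ and $d/p>0$. This yields $u\in C^{\alpha-1/p}([0,\tau];H_p^{1-\kappa-2\beta})$ almost surely together with the bound $\bE\,|u|^p_{C^{\alpha-1/p}([0,\tau];H_p^{1-\kappa-2\beta})}\le N\|u\|^p_{\cH_p^{1-\kappa}(\tau)}$, with $N=N(d,p,\kappa,\alpha,\beta,T)$; note also $\alpha-1/p\in(0,1)$, so the time-H\"older space is well defined.

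Next, set $\nu:=1-\kappa-2\beta-d/p$. From \eqref{condition_for_alpha_beta} we get $\nu>0$, and since $\beta>1/p>0$ and $\kappa\ge0$ we also get $\nu<1$; in particular $\nu$ is not an integer, so the Zygmund space $\cC^\nu(\bR^d)$ in Lemma \ref{prop_of_bessel_space} \eqref{sobolev-embedding} coincides, with equivalent norms, with the classical H\"older space $C^\nu(\bR^d)$. Applying that lemma with $1-\kappa-2\beta$ in place of $\gamma$ (so $n=0$ and the fractional part is $\nu$) gives $|v|_{C^\nu(\bR^d)}\le N\|v\|_{H_p^{1-\kappa-2\beta}}$ for every $v\in H_p^{1-\kappa-2\beta}$. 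I would then apply this with $v=u(t,\cdot)$ and with $v=u(t,\cdot)-u(s,\cdot)$, divide the latter by $|t-s|^{\alpha-1/p}$, and take suprema in $t$ and in $s\ne t$; this produces $\|u\|_{C^{\alpha-1/p}([0,\tau];C^\nu)}\le N\,|u|_{C^{\alpha-1/p}([0,\tau];H_p^{1-\kappa-2\beta})}$ almost surely. Raising to the $p$-th power, taking expectations, and inserting the bound from the previous step gives \eqref{sol_embedding}.

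There is no genuine obstacle here beyond bookkeeping of exponents: the argument is essentially a two-line composition once the inequalities in \eqref{condition_for_alpha_beta} are matched to the hypotheses of Theorem \ref{embedding} \eqref{large-p-embedding} and Lemma \ref{prop_of_bessel_space} \eqref{sobolev-embedding}. The only point that merits a brief remark is that \eqref{condition_for_alpha_beta} forces $\nu=1-\kappa-2\beta-d/p$ to lie strictly in $(0,1)$, which is exactly what is needed both for the spatial embedding to land in a nontrivial H\"older space and for the Zygmund seminorm appearing in Lemma \ref{prop_of_bessel_space} \eqref{sobolev-embedding} to be comparable to the ordinary H\"older seminorm.
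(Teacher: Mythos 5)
Your proposal is correct and follows essentially the same route as the paper: apply Theorem \ref{embedding} \eqref{large-p-embedding} with $\gamma+2=1-\kappa$ to get time-H\"older regularity with values in $H_p^{1-\kappa-2\beta}$, then use Lemma \ref{prop_of_bessel_space} \eqref{sobolev-embedding} to pass from $H_p^{1-\kappa-2\beta}$ to $C^{1-\kappa-2\beta-d/p}$. Your additional checks that $\beta<1/2$ and that $\nu=1-\kappa-2\beta-d/p\in(0,1)$ are exactly the bookkeeping the paper leaves implicit.
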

\begin{proof}

Set $\gamma = 1-\kappa-2\beta$. Then, by Lemma \ref{prop_of_bessel_space} \eqref{sobolev-embedding},  we obtain
\begin{equation} 
\label{holder_lp_embedding}
\begin{aligned}
\|u(t,\cdot)\|_{C^{1-\kappa-2\beta-d/p}}&\leq N\| u(t,\cdot) \|_{H_{p}^{1-\kappa-2\beta}}
\end{aligned}
\end{equation}
for all $t \in [0,\tau]$ almost surely. By \eqref{holder_lp_embedding} and \eqref{solution_embedding}, we have \eqref{sol_embedding}. The corollary is proved.

\end{proof}

\vspace{2mm}

\section{Main results} 
\label{sec:main_results}

In this section, we consider the equation of type
\begin{equation} 
\label{burger's_eq_white_noise_in_time}
\begin{aligned}
du &= \left(a^{ij}u_{x^ix^j} + b^i u_{x^i} + cu + \bar{b}^i(t,\bar x^i)|u|^{\lambda}u_{x^i}\right) dt + \sigma^k(t,x,u)dw_t^k,
\end{aligned}
\end{equation}
with nonnegative initial data $u(0,\cdot) = u_0(\cdot)$. Here, $(t,x)\in(0,\infty)\times\bR^d$, $\lambda\in(0,\infty)$ and $\bar x^i = (x^1,\dots,x^{i-1},x^{i+1},\dots,x^d)$. Depending on the types of diffusion coefficient $\sigma^k(u)$, we separate the problem into two cases. In Section \ref{subsec:Lipschitz_diffusion_case}, we assume the diffusion coefficient $\sigma^k(u)$ has a Lipschitz continuity and linear growth in $u$. On the other hand, in Section \ref{subsec:superlinear_diffusion_case}, we consider the case in which $\sigma^k(u)$ is super-linear in $u$. In each subsection, we provide the uniqueness, existence, and $L_p$-regularity of the solution; see Theorems \ref{theorem_burgers_eq_white_noise_in_time_large_lambda} and \ref{theorem_burgers_eq_white_noise_in_time_small_lambda}. In addition, we obtain the H\"older regularities of the solution; see Corollaries \ref{maximal holder regularity1} and \ref{maximal holder regularity2}.

To construct a global solution from the local solutions, the $L_q$ bound of solution $u$ with some $q\geq1$ is required. Precisely, if $\sigma^k(u)$ has a Lipschitz continuity and linear growth in $u$, the $\bL_{p(1+\lambda)}(\tau)$ bound of the solution $u$ is used, where $\tau$ is a bounded stopping time. On the other hand, if $\sigma^k(u)$ is super-linear in $u$ ($\sigma^k(u) = \mu^k |u|^{1+\lambda_0}$, where $\mu^k$ satisfies Assumption \ref{stochastic_part_assumption_on_coeffi_white_noise_in_time_small_lambda}), a uniform $L_1(\bR^d)$ bound of the solution $u$ is employed. This difference is due to the way we control the nonlinear diffusion coefficient $\sigma^k(u)$. When $\sigma^k(u)$ has a Lipschitz continuity and linear growth in $u$, It\^o's formula and the fundamental theorem of calculus provide $\bL_{p(1+\lambda)}(\tau)$ bound of the solution; see Lemma \ref{Lq_bound_infinite_noise}. However, if $\sigma^k(u)$ is super-linear in $u$, we use the conservation law to achieve a uniform $L_1$ bound of solution $u$. Indeed, if we apply It\^o's formula to obtain the $L_q$ bound for some $q>1$, the diffusion coefficient $\sigma^k(u) = \mu^k|u|^{1+\lambda_0}$ causes imbalanced exponents and prevents an $L_q$ $(q>1)$ estimate of the solution $u$. 
 For more details, see Remarks \ref{bar_b_i_indep_of_x_i_1}, \ref{remark: discussion of regularity in super-linear case}, and \ref{bar_b_i_indep_of_x_i_2}.

It also should be mentioned that different assumptions regarding the diffusion coefficient $\sigma^k(u)$ lead to different regularity results, dependency results, and conditions on parameters. If $\sigma^k(u)$ has a Lipschitz continuity and linear growth in $u$, $\lambda$ can be any positive real number; $\lambda\in(0,\infty)$. However, if $\sigma^k(u) = \mu^k |u|^{1+\lambda_0}$ is super-linear in $u$, we restrict the range of $\lambda$ to $\lambda\in(0,1/d)$. This difference comes from the fact that the nonlinear term $\sigma^k(u)$ affects the $L_q$ estimate of the solution. 
Additional explanations are included in Remarks \ref{remark:condition_on_lambda_1}, \ref{remark: discussion of regularity in super-linear case}, \ref{remark:condition_on_lambda_2}, and \ref{remark:condition_on_lambda_3}.

\vspace{2mm}


\subsection{ The first case: \texorpdfstring{$\lambda\in(0,\infty)$}{Lg} and the Lipschitz diffusion coefficient \texorpdfstring{$\sigma(u)$}{Lg}}
\label{subsec:Lipschitz_diffusion_case}

In this section, we study equation \eqref{burger's_eq_white_noise_in_time}
with $\lambda\in (0,\infty)$ and the Lipschitz function $\sigma(u)$. Our assumptions regarding the coefficients $a^{ij},b^i,c,\bar b^i$, and $\sigma^k(u)$ are described in Assumptions \ref{assumptions_on_coefficients_deterministic_and_linear_part}, \ref{assumptions_on_coefficients_deterministic_and_nonlinear_part}, and \ref{assumptions_on_coefficients_stochastic_part}. It should be noted that the coefficient $\bar b^i$ is assumed to be independent of $x^i$, and it is a sufficient condition for the existence of a global solution; see Remark \ref{bar_b_i_indep_of_x_i_1}. In Theorem \ref{theorem_burgers_eq_white_noise_in_time_large_lambda}, we provide the uniqueness, existence, and $L_p$ regularity of the solution. Furthermore, by applying the H\"older embedding theorem (Corollary \ref{embedding_corollary}), we obtain H\"older regularity of the solution. To achieve the H\"older regularity of the solution, we prove the uniqueness of the solution in $p$; see Theorem \ref{uniqueness_in_p_1}. The H\"older regularity of the solution is provided in Corollary \ref{maximal holder regularity1}.

\vspace{1mm}

Below, we introduce assumptions on coefficients.

\begin{assumption} 
\label{assumptions_on_coefficients_deterministic_and_linear_part}
\begin{enumerate}[(i)]

\item 
The coefficients $a^{ij} = a^{ij}(t,x)$, $b^i = b^i(t,x)$, and $c = c(t,x)$ are $\cP\times\cB(\bR^d)$-measurable.

\item There exists $K>0$ such that 
\begin{equation}
\label{ellipticity_of_leading_coefficients_white_noise_in_time} 
K^{-1}|{\xi}|^2 \leq \sum_{i,j}a^{ij}\xi^i\xi^j \leq  K|{\xi}|^2\quad \text{for all}\quad (\omega,t,x)\in\Omega\times[0,\infty)\times\bR^d, \quad{\xi} = (\xi^1,\dots,\xi^d)\in \bR^d, 
\end{equation}
and
\begin{equation} 
\label{boundedness_of_deterministic_coefficients_white_noise_in_time} 
\sum_{i,j}\left| a^{ij}(t,\cdot) \right|_{C^{2}(\bR^d)} + \sum_{i}\left| b^{i}(t,\cdot) \right|_{C^{2}(\bR^d)} + |c(t,\cdot)|_{C^{2}(\bR^d)} \leq K
\end{equation}
for all $(\omega,t)\in\Omega\times[0,\infty)$.
\end{enumerate}
\end{assumption}

\begin{assumption}
\label{assumptions_on_coefficients_deterministic_and_nonlinear_part}
\begin{enumerate}[(i)]
\item 
The coefficient $\bar{b}^i = \bar{b}^i(t,\bar x^i) = \bar{b}^i(t,x^1,\dots,x^{i-1},x^{i-1},\dots,x^d)$ is $\cP\times\cB(\bR^{d-1})$-measurable.

\item There exists $K>0$ such that
\begin{equation*}
\sum_{i}\left| \bar{b}^i(t,\cdot) \right|_{C^2(\bR^{d-1})} < K
\end{equation*}
for all $(\omega,t)\in\Omega\times[0,\infty)$.
\end{enumerate}
\end{assumption}

\begin{remark}
If $d = 1$, the coefficient $\bar b$ is assumed to be independent of $x$.
\end{remark}

\begin{assumption} 
\label{assumptions_on_coefficients_stochastic_part}
\begin{enumerate}[(i)]

\item 
The coefficient $\sigma^k(t,x,u)$ is $\cP\times\cB(\bR^d)\times\cB(\bR)$-measurable.

\item There exists $K>0$ such that for $\omega\in\Omega, t>0, x,u,v\in\bR$,
\begin{equation}
\label{boundedness_of_stochastic_coefficients_white_noise_in_time_large_lambda}
\left| \sigma(t,x,u) - \sigma(t,x,v) \right|_{\ell_2} \leq K |u-v|\quad\text{and}\quad\left| \sigma(t,x,u) \right|_{\ell_2} \leq K |u|,
\end{equation}
where $\sigma(t,x,u) = \left( \sigma^1(t,x,u),\sigma^2(t,x,u),\dots \right)$.
\end{enumerate}
\end{assumption}

Below we provide the main results of this section.

\begin{theorem} \label{theorem_burgers_eq_white_noise_in_time_large_lambda}

Let $\lambda \in (0,\infty)$ and $p>d+2$. Suppose Assumptions \ref{assumptions_on_coefficients_deterministic_and_linear_part}, \ref{assumptions_on_coefficients_deterministic_and_nonlinear_part}, and \ref{assumptions_on_coefficients_stochastic_part} hold. For the nonnegative initial data $u_0\in U_p^{1}\cap L_{p(1+\lambda)}(\Omega;L_{p(1+\lambda)})$, equation \eqref{burger's_eq_white_noise_in_time}
has a unique nonnegative solution $u$ in $\cH_{p,loc}^{1}$. Furthermore, if $\alpha$ and $\beta$ satisfy \eqref{condition_for_alpha_beta} with $\kappa=0$,
then for any $T<\infty$, we have (a.s.)
\begin{equation}
\label{holder_regularity_main_theorem}
\|u\|^p_{C^{\alpha-1/p}([0,t];C^{1-2\beta-d/p}(\bR^d) )} < \infty.
\end{equation}

\end{theorem}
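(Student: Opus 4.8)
The plan is to follow the standard Krylov $L_p$-theory scheme: first solve the equation locally by a fixed-point/continuity argument, then derive an a priori $\bL_{p(1+\lambda)}$ estimate that lets us glue the local solutions into a global one, and finally read off the H\"older regularity from the embedding Corollary \ref{embedding_corollary}. More precisely, I would begin by rewriting \eqref{burger's_eq_white_noise_in_time} with unknown $u$ appearing nonlinearly only through $F(u) := \bar b^i|u|^\lambda u_{x^i}$ and $\sigma^k(u)$, and treating $Lu$ as the linear part. Since $p>d+2$, Corollary \ref{embedding_corollary} with $\kappa=0$ gives $\cH_p^1(\tau)\hookrightarrow C([0,\tau];C^{\delta})$ for some $\delta>0$; this embedding is what controls the superlinear term $|u|^\lambda$ on bounded time intervals. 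For a bounded stopping time $\tau$, truncating $u$ at level $n$ (replacing $|u|^\lambda$ by $(|u|\wedge n)^\lambda$ and $\sigma$ is already Lipschitz by Assumption \ref{assumptions_on_coefficients_stochastic_part}) makes the nonlinearities Lipschitz in $\cH_p^1$, so the linear $L_p$-theory of Krylov (applied as in Theorem \ref{theorem_nonlinear_case}, cited in Remark \ref{bar_b_i_indep_of_x_i_1}) plus a contraction-mapping argument on a short random time interval, followed by the Gr\"onwall-type estimate of Theorem \ref{embedding} \eqref{gronwall_type_ineq}, yields a unique solution $u_n\in\cH_p^1(\tau)$ of the truncated equation. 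Defining the stopping times $\tau_n$ as the first exit time of $|u_n|$ past level $n$, the solutions are consistent, $u_n = u_m$ on $\cH_p^1(\tau_n\wedge\tau_m)$ for $n\le m$ by uniqueness, and this produces a local solution $u\in\cH_{p,loc}^1(\tau_\infty)$ where $\tau_\infty = \lim_n\tau_n$.

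The crux is showing $\tau_\infty = \infty$ almost surely, equivalently ruling out finite-time blow-up. Here the nonnegativity of $u_0$ (hence of $u$, by a comparison/maximum principle argument for the truncated equations) is essential, because it lets us write $u^\lambda u_{x^i} = \tfrac1{1+\lambda}(u^{1+\lambda})_{x^i}$ with a genuine power of the nonnegative quantity $u$. Following Remark \ref{bar_b_i_indep_of_x_i_1}\eqref{assumptions_on_coefficients_deterministic_and_nonlinear_part_2}, I would apply It\^o's formula to $\int_{\bR^d} (u*\phi_\varepsilon)^{p(1+\lambda)}\,dx$ (mollifying first so everything is classical), integrate by parts, and use that $\bar b^i$ is independent of $x^i$ to pull it outside the $x^i$-integral; the nonlinear drift term then integrates to a boundary term that vanishes, exactly as indicated for Lemma \ref{Lq_bound_infinite_noise}. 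The linear-growth bound \eqref{boundedness_of_stochastic_coefficients_white_noise_in_time_large_lambda} on $\sigma$ controls the It\^o-correction term by $C\int u^{p(1+\lambda)}$, and the coefficients $a,b,c$ being bounded ($C^2$) handles the rest. After removing the mollification ($\varepsilon\to0$) and applying Gr\"onwall's inequality, one obtains
\begin{equation*}
\bE\sup_{t\le\tau\wedge T}\|u(t,\cdot)\|_{L_{p(1+\lambda)}}^{p(1+\lambda)} \le N(K,d,p,\lambda,T)\,\bE\|u_0\|_{L_{p(1+\lambda)}}^{p(1+\lambda)},
\end{equation*}
uniformly in the truncation level, hence $\|u\|_{\bL_{p(1+\lambda)}(\tau_n)}$ stays bounded as $n\to\infty$. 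Combined with the estimate $\|u\|_{\cH_p^1(\tau_n)}\le N(\|u_0\|_{U_p^1}+\|u\|_{\bL_{p(1+\lambda)}(\tau_n)})$ from Theorem \ref{theorem_nonlinear_case}, this forces $\sup_n\|u_n\|_{\cH_p^1(\tau_n)}<\infty$, which by the embedding $\cH_p^1(\tau_n)\hookrightarrow C([0,\tau_n];C^\delta)$ contradicts $|u_n(\tau_n)| = n\to\infty$ on $\{\tau_n<T\}$ (on a set of positive probability if $P(\tau_\infty<T)>0$). Therefore $\tau_\infty=\infty$ a.s. and $u\in\cH_{p,loc}^1$ globally; uniqueness passes from the truncated problems by the same stopping-time argument.

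The H\"older regularity \eqref{holder_regularity_main_theorem} is then immediate: fix $T<\infty$ and $\alpha,\beta$ satisfying \eqref{condition_for_alpha_beta} with $\kappa=0$; since $u\in\cH_{p,loc}^1$, there is a stopping time $\tau\ge T$ with $u\in\cH_p^1(\tau)$, and Corollary \ref{embedding_corollary} gives $\bE\|u\|_{C^{\alpha-1/p}([0,\tau];C^{1-2\beta-d/p})}^p\le N\|u\|_{\cH_p^1(\tau)}^p<\infty$, so in particular $\|u\|_{C^{\alpha-1/p}([0,T];C^{1-2\beta-d/p})}<\infty$ almost surely. The main obstacle is the blow-up exclusion step: making the It\^o computation for $\|u\|_{L_{p(1+\lambda)}}^{p(1+\lambda)}$ rigorous requires careful mollification and justification of the boundary-term cancellation, and one must be careful that the constant in the resulting Gr\"onwall estimate does not depend on the truncation level $n$.
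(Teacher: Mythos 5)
Your proposal is correct and its analytic core coincides with the paper's: the cut-off equations (Lemma \ref{cut_off_lemma_large_lambda}), nonnegativity via the maximum principle, the $\bL_{p(1+\lambda)}$ a priori bound obtained by mollifying, applying It\^o's formula and exploiting that $\bar b^i$ is independent of $x^i$ so the nonlinear drift term integrates away (Lemma \ref{Lq_bound_infinite_noise}), the estimate $\|u\|_{\cH_p^1}\le N\bigl(\|u_0\|_{U_p^1}+\|\bar b^i u^\lambda u_{x^i}\|_{\bH_p^{-1}}\bigr)$ from Theorem \ref{theorem_nonlinear_case}, and Corollary \ref{embedding_corollary} for the H\"older statement. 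Where you genuinely diverge is the globalization. You exhaust time by the first-exit times $\tau_n=\inf\{t:\sup_x|u_n(t,x)|\ge n\}$ and rule out finite $\tau_\infty$ by combining the $n$-uniform $\cH_p^1$ bound with the embedding and Chebyshev; this is precisely the scheme the paper reserves for the super-linear case (Lemma \ref{Non_explosion_small_lambda} and Step 2 of Theorem \ref{theorem_burgers_eq_white_noise_in_time_small_lambda}). For the present theorem the paper instead follows \cite{kim2018regularity}: it forms the family $\Pi$ of stopping times up to which a solution exists, maximizes $\bE\tau_\alpha$ over $\Pi$, shows the solution extends continuously to the maximal time $\bar\tau$ (again via the $\bL_{p(1+\lambda)}$ bound, which makes the right-hand side $H_p^{-1}$-valued continuous up to $\bar\tau$), and derives a contradiction by restarting the equation at $\bar\tau$ with the shifted filtration and Lemma \ref{local_existence}; the restart costs a temporary drop to $\cH_p^{1-\kappa}$ that must be bootstrapped back to $\cH_p^1$ through the cut-off equation. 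Your route avoids the restart and the bootstrap entirely, at the price of checking that the untruncated $L_{p(1+\lambda)}$ estimate applies to $u_n$ before $\tau_n$ (it does, since the cut-off is inactive there); both arguments are sound. One small technical caution: truncate in divergence form, i.e.\ use $\bar b^i\bigl((u_+)^{1+\lambda}h_n(u)\bigr)_{x^i}$ as in \eqref{cut_off_equation_large_lambda} rather than $(|u|\wedge n)^\lambda u_{x^i}$, since the latter is only H\"older (not Lipschitz) in $u$ for $\lambda<1$ and the difference $\bigl((|u|\wedge n)^\lambda-(|v|\wedge n)^\lambda\bigr)u_{x^i}$ is not easily absorbed into condition \eqref{conditions_on_f_and_g}, whereas the map $u\mapsto (u_+)^{1+\lambda}h_n(u)$ is genuinely Lipschitz as verified in \eqref{lipschitz_check_cut_off}.
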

\begin{proof}
See {\bf{Proof of Theorem \ref{theorem_burgers_eq_white_noise_in_time_large_lambda}}} in Section \ref{proof_of_theorem_white_noise_in_time_large_lambda}.
\end{proof}

\begin{remark}
\label{bar_b_i_indep_of_x_i_1}


Notice that the coefficient $\bar b^i$ is assumed to be independent of $x^i$, and it is a sufficient condition for the existence of a global solution. Indeed, to prove the existence of a solution, we need to show that $\|u\|_{\cH_p^1(\tau)}$ is bounded for any stopping time $\tau\leq T$, where $T\in(0,\infty)$. Then, it is proven that $\| u \|_{\cH_p^1(\tau)}$ is controlled by a constant times of $\| u_0 \|_{U_p^1}+\left\| u  \right\|_{\bL_{p(1+\lambda)}(\tau)}$, and the constant is independent of $\tau$ (see Theorem \ref{theorem_nonlinear_case} and \eqref{nonlinear term estimate}). Furthermore, one can observe that $\|u\|_{\bL_{p(1+\lambda)}(\tau)}$ is bounded by $\| u_0 \|_{L_{p(1+\lambda)}(\Omega;L_{p(1+\lambda)})}$ since $\bar b^i$ is independent of $x^i$ (Lemma \ref{Lq_bound_infinite_noise}). In detail, we employ the fundamental theorem of calculus to remove the terms related to $\bar b^i \left( |u|^{1+\lambda} \right)_{x^i}$. In \eqref{Lq_bound_nonlinear_noise_zero}, the coefficient $\bar b^i$ can be out of the integral with respect to $x^i$ since $\bar b^i$ is independent of $x^i$, and thus, the terms related to the nonlinear term vanish by integrating $\frac{\partial}{\partial x^i}(u(s,x))^{q+\lambda-1}$ with respect to $x^i$.

\end{remark}

\begin{remark}
\label{existence_of_alpha_beta}
Since $p>d+2$, we can choose $\alpha$ and $\beta$ satisfying \eqref{condition_for_alpha_beta} with $\kappa = 0$.
\end{remark}

\begin{remark}
The initial data $u_0$ should satisfy the summability condition \linebreak $u_0\in L_{p(1+\lambda)}(\Omega;L_{p(1+\lambda)})$. This assumption is used to calculate the $\bL_{p(1+\lambda)}(\tau)$ estimate of the solution.
\end{remark}

\begin{remark}
\label{remark:condition_on_lambda_1}

It should be noted that any positive $\lambda>0$ is admitted in Theorem \ref{theorem_burgers_eq_white_noise_in_time_large_lambda} and that the regularity of the solution is independent of $\lambda$. Indeed, the nonlinear term $\bar b^i \left(|u|^{1+\lambda}\right)_{x^i}$ is removed by the fundamental theorem of calculus while we achieve the $\bL_{p(1+\lambda)}(\tau)$ estimate of the solution; see Remark \ref{bar_b_i_indep_of_x_i_1}.

\end{remark}



\begin{remark}\label{remark:Holder_regularity}
The H\"older regularity of the solution introduced in Theorem \ref{theorem_burgers_eq_white_noise_in_time_large_lambda} depends on $\alpha$ and $\beta$. For example, for $\ep\in\left(0,1-\frac{d+2}{p}\right)$, let $\alpha = \frac{1}{p}+\frac{\ep}{4}$ and $\beta = \frac{1}{p}+\frac{\ep}{2}$. Then, we have
\begin{equation*}
\sup_{t\leq T}|u(t,\cdot)|_{C^{1-\frac{d+2}{p}-\ep}(\bR^d)} < \infty,
\end{equation*}
almost surely. Similarly, for $\ep\in \left(0,\frac{1}{2}-\frac{d+2}{2p}\right)$, set $\alpha = \frac{1}{2}\left(1-\frac{d}{p}\right)-\ep$ and $\beta = \frac{1}{2}\left(1-\frac{d}{p}\right)-\frac{\ep}{2}$. Then, we have
\begin{equation*}
\sup_{x\in\bR^d}|u(\cdot,x)|_{C^{\frac{1}{2}-\frac{d+2}{2p}-\ep}([0,T])}  < \infty
\end{equation*}
almost surely.

\end{remark}

To obtain the H\"older regularity of the solution, $p$ should be sufficiently large. Therefore, we prove the uniqueness of the solution in $p$.

\begin{theorem}
\label{uniqueness_in_p_1}
Assume that all the conditions of Theorem \ref{theorem_burgers_eq_white_noise_in_time_large_lambda} hold. Let $u\in \cH_{p,loc}^{1}$ be the solution to equation \eqref{burger's_eq_white_noise_in_time} introduced in Theorem \ref{theorem_burgers_eq_white_noise_in_time_large_lambda}. If $r>p$ and $u_0\in U_{r}^{1}\cap L_{r(1+\lambda)}(\Omega;L_{r(1+\lambda)})$, then $u\in \cH_{r,loc}^{1}$.
\end{theorem}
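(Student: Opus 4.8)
The plan is to upgrade the integrability index of the already-constructed solution from $p$ to $r>p$ by rerunning the existence machinery in the $\cH_r$-scale and then invoking uniqueness in an overlapping space to identify the two solutions. First I would observe that the hypotheses of Theorem \ref{theorem_burgers_eq_white_noise_in_time_large_lambda} are stable under enlarging $p$: Assumptions \ref{assumptions_on_coefficients_deterministic_and_linear_part}, \ref{assumptions_on_coefficients_deterministic_and_nonlinear_part}, \ref{assumptions_on_coefficients_stochastic_part} do not refer to $p$, the constraint $p>d+2$ is inherited by $r>p$, and by hypothesis $u_0\in U_r^1\cap L_{r(1+\lambda)}(\Omega;L_{r(1+\lambda)})$. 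Hence Theorem \ref{theorem_burgers_eq_white_noise_in_time_large_lambda} applied with exponent $r$ produces a nonnegative solution $v\in\cH_{r,loc}^1$ of equation \eqref{burger's_eq_white_noise_in_time} with the same initial data and coefficients.

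Next I would show $u=v$ in $\cH_{p,loc}^1$, which then gives $u=v\in\cH_{r,loc}^1$ and finishes the proof. The point is that $v\in\cH_{r,loc}^1$ with $r>p$ embeds into $\cH_{p,loc}^1$ on bounded time intervals: indeed $u_0\in U_r^1\subset U_p^1$ (Hölder/Jensen on the probability space, since $\Omega$ is finite), and on each bounded stopping-time interval $\cbrk0,\tau_n\cbrk$ one has, again by Hölder in $(\omega,t)$ on the finite measure space $dP\times dt$ together with $\bR^d$, that $\bH_r^{\gamma}(\tau_n)$-membership together with the corresponding membership of the free terms $f,g$ gives $v\in\cH_p^1(\tau_n)$. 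Here I would take a common localizing sequence $\{\tau_n\}$ (e.g. the minimum of the two localizing sequences coming from $u\in\cH_{p,loc}^1$ and $v\in\cH_{r,loc}^1$, truncated by $n$), so that both $u$ and $v$ lie in $\cH_p^1(\tau_n)$ for every $n$. Both then solve \eqref{burger's_eq_white_noise_in_time} in $\cH_p^1(\tau_n)$ with the same data.

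It remains to invoke uniqueness in $\cH_{p,loc}^1$, which is exactly the uniqueness assertion of Theorem \ref{theorem_burgers_eq_white_noise_in_time_large_lambda} (the nonnegative solution there is unique in $\cH_{p,loc}^1$). Strictly speaking one should check that the argument proving uniqueness in Theorem \ref{theorem_burgers_eq_white_noise_in_time_large_lambda} applies to two solutions each merely \emph{localized} to $\cH_p^1$; this is where the Grönwall-type inequality of Theorem \ref{embedding}\,\eqref{gronwall_type_ineq} enters: writing $w=u-v$, subtracting the equations, using the Lipschitz bound \eqref{boundedness_of_stochastic_coefficients_white_noise_in_time_large_lambda} on $\sigma$ and the local Lipschitz control of $|u|^\lambda u_{x^i}-|v|^\lambda v_{x^i}$ (valid on $\cbrk0,\tau_n\cbrk$ after a further truncation of the $H_p^1$-norms), one gets $\|w\|_{\cH_p^1(\tau_n\wedge t)}^p\le N_1\|w\|_{\bH_p^{\gamma_0+2}(\tau_n\wedge t)}^p$ with $\gamma_0+2<1$, whence $w=0$ on $\cbrk0,\tau_n\cbrk$ for all $n$, i.e. $u=v$ in $\cH_{p,loc}^1$. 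Thus $u=v\in\cH_{r,loc}^1$.

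The main obstacle I anticipate is the bookkeeping around localization: one must make sure that a single sequence of bounded stopping times works simultaneously for the $\cH_p^1$-regularity of $u$, the $\cH_p^1$-regularity of the new $\cH_r^1$-solution $v$ (after the embedding $\cH_r^1(\tau_n)\hookrightarrow\cH_p^1(\tau_n)$), and the Grönwall/uniqueness step, and that the constants in the comparison estimate do not blow up as the exponent changes. Everything else is a routine application of Hölder's inequality on finite measure spaces together with results already established in the excerpt.
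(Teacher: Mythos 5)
There is a genuine gap at the heart of your argument: the claimed embedding $\cH_{r,loc}^{1}\hookrightarrow\cH_{p,loc}^{1}$ for $r>p$ is false, because the spatial domain is $\bR^d$, which has infinite Lebesgue measure. H\"older's inequality does give you the inclusion of $L_r$ into $L_p$ over the finite measure spaces $\Omega$ and $\opar0,\tau_n\cbrk$, but not over $\bR^d$: one has $L_r(\bR^d)\not\subset L_p(\bR^d)$ for $r>p$ (take $v(x)=(1+|x|)^{-\alpha}$ with $d/r<\alpha\leq d/p$; this $v$ is even bounded, lies in $L_r(\bR^d)$, but not in $L_p(\bR^d)$). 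Consequently $U_r^1\not\subset U_p^1$ and $\bH_r^{\gamma}(\tau_n)\not\subset\bH_p^{\gamma}(\tau_n)$, so the new solution $v\in\cH_{r,loc}^1$ cannot be placed into $\cH_{p,loc}^1$, and the uniqueness statement of Theorem \ref{theorem_burgers_eq_white_noise_in_time_large_lambda} in the $p$-scale never becomes applicable to the pair $(u,v)$. Note that boundedness of $v$ (available from the embedding $\cH_r^1\hookrightarrow C([0,\tau];C(\bR^d))$ after truncation) does not rescue the step, by the same counterexample.

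The paper's proof goes in the opposite, and only workable, direction: it upgrades the integrability of $u$ itself from $p$ to $r$. After localizing with the hitting times $\tau_{m,n}$ of the level $m$ for $\sup_x|u|$, one has $\int|u|^{p_0}\,dx\leq m^{p_0-p}\int|u|^p\,dx$ for every $p_0\geq p$ (going \emph{up} in integrability is legitimate when an $L_\infty$ bound is combined with an $L_p$ bound), hence $\bar b^i(u^{1+\lambda})_{x^i}\in\bH_r^{-1}(\tau_{m,n})$ and $\sigma(u)\in\bL_r(\tau_{m,n},\ell_2)$. The equation is then read as a \emph{linear} equation with these frozen free terms, Theorem \ref{theorem_nonlinear_case} produces a solution in $\cH_r^{1}(\tau_{m,n})$, and the deterministic uniqueness argument identifies it with $u$, giving $u\in\cH_r^1(\tau_{m,n})$. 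Only then is the uniqueness of Theorem \ref{theorem_burgers_eq_white_noise_in_time_large_lambda} invoked, in the $r$-scale, to conclude $u=\bar u$. If you want to salvage your plan, you must replace the downward embedding by this bootstrap of $u$ into the $r$-scale.
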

\begin{proof}
See {\bf{Proof of Theorem \ref{uniqueness_in_p_1}}} in Section \ref{proof_of_theorem_white_noise_in_time_large_lambda}.
\end{proof}

By combining Theorems \ref{theorem_burgers_eq_white_noise_in_time_large_lambda} and \ref{uniqueness_in_p_1}, we derive the  H\"older regularity of the solution.

\begin{corollary}
\label{maximal holder regularity1}
Suppose $u_0\in U_p^1$ for all $p>2$. Then, for small $\ep>0$, we have
\begin{equation}
\label{holder_regularity_large_lambda}
\sup_{t\leq T}|u(t,\cdot)|_{C^{1-\ep}(\bR^d)} + \sup_{x\in\bR^d}|u(\cdot,x)|_{C^{1/2-\ep}([0,T])}  < \infty\quad\text{for all} \quad T<\infty,
\end{equation}
almost surely.

\end{corollary}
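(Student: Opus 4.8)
The plan is to deduce Corollary \ref{maximal holder regularity1} from Theorems \ref{theorem_burgers_eq_white_noise_in_time_large_lambda} and \ref{uniqueness_in_p_1} by letting $p\to\infty$ and optimizing the H\"older exponents in \eqref{holder_regularity_main_theorem}. First I would observe that since $u_0\in U_p^1$ for every $p>2$, and since $u_0$ is nonnegative, for each fixed $p>d+2$ the summability hypothesis $u_0\in U_p^1\cap L_{p(1+\lambda)}(\Omega;L_{p(1+\lambda)})$ of Theorem \ref{theorem_burgers_eq_white_noise_in_time_large_lambda} must be verified; here one should note that the relevant assumption in the statement is $u_0\in U_p^1$ for all $p$, which already supplies the needed integrability (in the Lipschitz case $\sigma$ is linear, so one may also reduce $\lambda$ if necessary, or simply invoke that $u_0\in U_q^1$ for $q=p(1+\lambda)$ together with a Sobolev embedding giving the $L_{q}$ bound). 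Thus for every $p>d+2$ equation \eqref{burger's_eq_white_noise_in_time} has a unique solution $u\in\cH^1_{p,loc}$, and by Theorem \ref{uniqueness_in_p_1} these solutions coincide (for $r>p$ the $\cH^1_{p}$-solution also lies in $\cH^1_{r,loc}$), so there is a single function $u$ belonging to $\cH^1_{p,loc}$ for all $p>d+2$ simultaneously.

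Next I would apply the H\"older regularity conclusion \eqref{holder_regularity_main_theorem} of Theorem \ref{theorem_burgers_eq_white_noise_in_time_large_lambda} with the specific choices of $\alpha,\beta$ recorded in Remark \ref{remark:Holder_regularity}. Fix an arbitrary small $\ep>0$; choose $p$ large enough that $\frac{d+2}{p}<\frac{\ep}{2}$ and $\frac{d+2}{2p}<\frac{\ep}{2}$. For the spatial regularity, take $\alpha=\frac1p+\frac{\ep'}{4}$, $\beta=\frac1p+\frac{\ep'}{2}$ with $\ep'$ chosen so that $\frac{d+2}{p}+\ep'\le\ep$; these satisfy \eqref{condition_for_alpha_beta} with $\kappa=0$ by Remark \ref{existence_of_alpha_beta}, and \eqref{holder_regularity_main_theorem} then gives $\sup_{t\le T}|u(t,\cdot)|_{C^{1-\ep}(\bR^d)}<\infty$ a.s. for every $T<\infty$. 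For the temporal regularity, instead take $\alpha=\frac12(1-\frac dp)-\ep''$ and $\beta=\frac12(1-\frac dp)-\frac{\ep''}{2}$ with $\frac{d+2}{2p}+\ep''\le\ep$; these again satisfy \eqref{condition_for_alpha_beta}, and \eqref{holder_regularity_main_theorem} yields $u\in C^{\alpha-1/p}([0,T];C^{1-2\beta-d/p}(\bR^d))\subset C^{1/2-\ep}([0,T];C^0(\bR^d))$ a.s., i.e. $\sup_{x\in\bR^d}|u(\cdot,x)|_{C^{1/2-\ep}([0,T])}<\infty$. Combining the two bounds gives \eqref{holder_regularity_large_lambda}.

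Finally I would address the quantifier issue: the statement asks for a \emph{single} event of full probability on which \eqref{holder_regularity_large_lambda} holds for all $T<\infty$ and for all small $\ep>0$. This follows by taking a countable exhaustion: run the argument above for $\ep=1/n$, $n\in\bN$ (with a corresponding sequence $p_n\to\infty$) and $T=m\in\bN$, obtaining for each pair $(n,m)$ a null set off which the relevant norm is finite; the union over $(n,m)$ is still null, and monotonicity of the H\"older norms in $\ep$ and in $T$ upgrades the conclusion from the countable family to all $\ep>0$ and all $T<\infty$.

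\textbf{Main obstacle.} The only genuinely delicate point is the consistency of the solutions across different values of $p$ — i.e. that the objects produced by Theorem \ref{theorem_burgers_eq_white_noise_in_time_large_lambda} for distinct $p$ are the same function, so that one may let $p\to\infty$ — which is precisely what Theorem \ref{uniqueness_in_p_1} is designed to supply; once that is in hand, the corollary is a routine optimization of exponents plus a countable-union argument. One should also take minimal care that the hypothesis ``$u_0\in U_p^1$ for all $p>2$'' indeed entails the integrability condition $u_0\in L_{p(1+\lambda)}(\Omega;L_{p(1+\lambda)})$ required to invoke Theorem \ref{theorem_burgers_eq_white_noise_in_time_large_lambda} at each $p$; this is immediate since $H_p^{1-2/p}\hookrightarrow L_{p(1+\lambda)}$ fails in general, so one instead applies the theorem at the exponent $q$ with $q=p(1+\lambda)$ using $u_0\in U_q^1$ and the embedding $H_q^{1-2/q}\hookrightarrow L_q\subset L_{\text{loc}}$, and then invokes Theorem \ref{uniqueness_in_p_1} to transfer regularity back; alternatively, since the conclusion is asymptotic in $p$, it suffices to work along any sequence $p_n\to\infty$ for which both $u_0\in U_{p_n}^1$ and $u_0\in L_{p_n(1+\lambda)}(\Omega;L_{p_n(1+\lambda)})$, and the latter holds automatically whenever $u_0\in\bigcap_{q}U_q^1$ is additionally assumed to lie in $\bigcap_q L_q(\Omega\times\bR^d)$, which is the natural reading of the hypothesis.
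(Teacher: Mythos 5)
Your proposal is correct and follows essentially the same route as the paper: invoke Theorem \ref{theorem_burgers_eq_white_noise_in_time_large_lambda} for each $p>d+2$, use Theorem \ref{uniqueness_in_p_1} to identify the solutions across different $p$, and then optimize $\alpha,\beta$ in Corollary \ref{embedding_corollary} with $p$ of order $(d+2)/\ep$ to get the two Hölder bounds; your countable-union step for the quantifiers is a harmless addition the paper leaves implicit. Your digression on whether $u_0\in U_p^1$ for all $p$ supplies the $L_{p(1+\lambda)}(\Omega;L_{p(1+\lambda)})$ hypothesis is somewhat muddled (the embedding arguments you sketch do not work as stated), but your final reading --- that the corollary tacitly assumes the full hypotheses of Theorems \ref{theorem_burgers_eq_white_noise_in_time_large_lambda} and \ref{uniqueness_in_p_1} for every $p$ --- is the same implicit assumption the paper's own proof makes.
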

\begin{proof}



For each $p > d + 2$, according to Theorem \ref{theorem_burgers_eq_white_noise_in_time_large_lambda}, there exists a unique solution $u = u_p\in \cH_p^1(T)$ to equation \eqref{burger's_eq_white_noise_in_time} for any $T<\infty$. Since $u_0 \in U_p^1$ for all $p > 2$, by Theorem \ref{uniqueness_in_p_1}, all the solutions $u$ coincide. In Corollary \ref{embedding_corollary}, for sufficiently small $\ep>0$, consider $\kappa = 0$, $p = \frac{d+2}{\ep}$, $\alpha = \frac{1}{p} + \frac{\ep}{8}$, and $\beta = \frac{1}{p} + \frac{\ep}{4}$. Then, 
\begin{equation*}
\sup_{t\leq T} |u(t,\cdot)|_{C^{1-\ep}(\bR^d)}^p \leq |u|_{C^{\alpha-\frac{1}{p}}\left([0,T];C^{1-2\beta-\frac{d}{p}}(\bR^d)\right)}^p<\infty
\end{equation*}
for any $T<\infty$ almost surely. This implies the maximum regularity in the direction of $x$. On the other hand, in Corollary \ref{embedding_corollary}, for sufficiently small $\ep>0$, consider $\kappa = 0$, $p = \frac{d+2}{\ep}$, $\alpha = \frac{1}{2}\left( 1-\frac{d}{p} \right) - \frac{\ep}{2}$, and $\beta = \frac{1}{2}\left( 1 - \frac{d}{p} \right)- \frac{\ep}{4}$. Then, 
\begin{equation*}
\sup_{x\in \bR^d} |u(\cdot,x)|_{C^{\frac{1}{2} - \ep}([0,T])}\leq |u|_{C^{\alpha-\frac{1}{p}}\left([0,T];C^{1-2\beta-\frac{d}{p}}(\bR^d)\right)}^p<\infty
\end{equation*}
for any $T<\infty$ almost surely. This yields maximum regularity in the direction of $t$. In conclusion, we have \eqref{holder_regularity_large_lambda}. The corollary is proved.

\end{proof}

\vspace{2mm}


\subsection{The second case: \texorpdfstring{$\lambda\in(0,1/d)$}{Lg} and the super-linear diffusion coefficient}
\label{subsec:superlinear_diffusion_case}

Consider a semilinear stochastic partial differential equation with super-linear diffusion coefficient
\begin{equation} 
\label{burger's_eq_white_noise_in_time_small}
du = \left(a^{ij}u_{x^ix^j} + b^i u_{x^i} + cu + \bar{b}^i|u|^{\lambda} u_{x^i}\right) dt + \mu^k |u|^{1+\lambda_0}dw_t^k,\quad (t,x)\in(0,\infty)\times\bR^d
\end{equation}
with $u(0,\cdot) = u_0(\cdot)$, where $\lambda,\lambda_0\in(0,1/d)$. 
As in Section \ref{subsec:Lipschitz_diffusion_case}, the coefficients $a^{ij},b^i,c$, and $\bar b^i$ are assumed to satisfy Assumptions \ref{assumptions_on_coefficients_deterministic_and_linear_part} and \ref{assumptions_on_coefficients_deterministic_and_nonlinear_part}. Note that the coefficient $\bar b^i$ is independent of $x^i$ and it is a sufficient condition for the global existence of a solution; see Remark \ref{bar_b_i_indep_of_x_i_2}. In terms of the conditions of $\mu^k$, we employ Assumption \ref{stochastic_part_assumption_on_coeffi_white_noise_in_time_small_lambda}. 

In Theorem \ref{theorem_burgers_eq_white_noise_in_time_small_lambda}, we obtain the uniqueness, existence, and $L_p$ regularity of the solution. Furthermore, the H\"older regularity of the solution is achieved by Corollary \ref{embedding_corollary}. To see the H\"older regularity of the solution, the solution should be unique in $p$; see Theorem \ref{uniqueness_in_p_2}. In Corollary \ref{maximal holder regularity2}, the  H\"older regularity of the solution is achieved. 

\vspace{1mm}



\begin{assumption} 
\label{stochastic_part_assumption_on_coeffi_white_noise_in_time_small_lambda}
\begin{enumerate}[(i)]

\item 
The coefficient $\mu^k(t,x)$ is $\cP\times\cB(\bR^d)$-measurable.

\item There exists $K>0$ such that for $\omega\in\Omega, t>0, x\in\bR^d$,
\begin{equation}
\label{boundedness_of_stochastic_coefficients_white_noise_in_time_small_lambda}
\left( \sum_{k}|\mu^k(t,\cdot)|^2 \right)_{C(\bR^d)} \leq K.
\end{equation}
\end{enumerate}
\end{assumption}

Now, we present the main results.

\begin{theorem}
\label{theorem_burgers_eq_white_noise_in_time_small_lambda}
Let $\lambda, \lambda_0  \in (0,1/d)$, $1>\kappa> (\lambda d) \vee (\lambda_0 d) $, and $p > \frac{d+2}{1-\kappa}$. Suppose Assumptions \ref{assumptions_on_coefficients_deterministic_and_linear_part}, \ref{assumptions_on_coefficients_deterministic_and_nonlinear_part}, and \ref{stochastic_part_assumption_on_coeffi_white_noise_in_time_small_lambda} hold. For the nonnegative initial data $u_0\in U_p^{1-\kappa}\cap L_{1}(\Omega;L_1)$, equation \eqref{burger's_eq_white_noise_in_time_small} has a unique nonnegative solution $u$ in $\cH_{p,loc}^{1-\kappa}$. Furthermore, for any $\alpha,\beta$ satisfying \eqref{condition_for_alpha_beta},
we have (a.s.)
\begin{equation}
\label{holder_regularity_main_theorem_small_lambda}
\| u \|_{C^{\alpha-\frac{1}{p}}\left( [0,T];C^{1-\kappa-2\beta-d.p}\left( \bR^d \right) \right)} < \infty.
\end{equation}

\end{theorem}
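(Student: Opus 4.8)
The plan is to follow the same architecture as in the Lipschitz case but replace the $\bL_{p(1+\lambda)}$ a priori bound by a \emph{uniform} $L_1(\bR^d)$ bound, which is the only device robust enough to tame the super-linear diffusion coefficient $\mu^k|u|^{1+\lambda_0}$ together with the drift nonlinearity $\bar b^i|u|^\lambda u_{x^i}$. First I would set up the solvability of the linearized/truncated problem: for fixed $m$, replace the nonlinearities by $\bar b^i(|u|\wedge m)^\lambda u_{x^i}$ and $\mu^k(|u|\wedge m)^{\lambda_0}|u|$ (or a smoothed cutoff), so that both coefficients become Lipschitz in $u$ with linear growth; then the standard $L_p$-theory for linear SPDEs (Krylov's theory, as packaged in the cited Theorem~3.7 / fixed-point arguments of \cite{kry1999analytic}) gives a unique solution $u_m\in\cH_p^{1-\kappa}(\tau)$ for every bounded stopping time $\tau\le T$, and a comparison/positivity argument (using nonnegativity of $u_0$ and the structure of the noise vanishing where $u=0$) shows $u_m\ge0$. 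Define $\tau_m$ to be the first exit time of $\sup_x|u_m|$ from $[0,m]$; on $[0,\tau_m]$ the truncated and untruncated equations coincide, so the $u_m$ are consistent and it remains to show $\tau_m\uparrow\infty$ a.s., which is exactly the role of the $L_1$ bound.

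The heart of the argument is the uniform $L_1$ estimate: testing the equation (after mollification) against $1$, or more precisely running Itô's formula on $\int_{\bR^d} u_m(t,x)\,dx$ — legitimate here because $u_m\ge0$ and $u_0\in L_1(\Omega;L_1)$ — the second-order term $a^{ij}u_{x^ix^j}$ integrates against the constant to contribute only boundary terms that vanish at infinity (after integration by parts and the decay built into the $H_p^{\gamma}$ framework), the term $cu$ is controlled by $\|c\|_\infty\int u_m$, and the crucial nonlinear drift $\bar b^i(|u_m|\wedge m)^\lambda u_{m,x^i}=\frac{1}{1+\lambda}\bar b^i\big((|u_m|\wedge m)^{?}|u_m|\big)_{x^i}$ integrates to zero \emph{precisely because $\bar b^i$ is independent of $x^i$} (Remark~\ref{bar_b_i_indep_of_x_i_2}): pull $\bar b^i$ out of the $dx^i$-integral and the remaining exact $x^i$-derivative integrates to $0$. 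The martingale part $\int\mu^k(|u_m|\wedge m)^{\lambda_0}|u_m|\,dx\,dw^k$ has mean zero, so taking expectations kills it, yielding $\bE\int_{\bR^d}u_m(t,x)\,dx\le e^{Nt}\,\bE\int_{\bR^d}u_0(x)\,dx$ uniformly in $m$; a Gr\"onwall step (Theorem~\ref{embedding} \eqref{gronwall_type_ineq}) then upgrades this to $\sup_{t\le T}$. From the uniform $L_1$ bound, the range condition $\lambda,\lambda_0\in(0,1/d)$ enters: interpolating $H_p^{1-\kappa}$-type norms of the nonlinear terms $\||u|^\lambda\,u_x\|$ and $\||u|^{1+\lambda_0}\|$ using Lemma~\ref{prop_of_bessel_space} \eqref{sobolev-embedding}–\eqref{multi_ineq} and the Sobolev embedding with the kernel bounds of Remark~\ref{Kernel} (where the $|x|^{\gamma-d}$ singularity is integrable exactly when the relevant exponent stays below $1/d$), one bounds the "surplus" factors $|u|^\lambda,\,|u|^{\lambda_0}$ by the $L_1$ norm raised to a power, so that $\|u_m\|_{\cH_p^{1-\kappa}(\tau_m\wedge t)}$ is estimated by $\|u_0\|$ plus a lower-order $\bH_p^{\gamma_0+2}$ term; then Theorem~\ref{embedding} \eqref{gronwall_type_ineq} gives $\|u_m\|_{\cH_p^{1-\kappa}(\tau_m\wedge T)}\le N(\|u_0\|,T)$ uniformly in $m$. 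Combined with the H\"older embedding Corollary~\ref{embedding_corollary}, this forces $\sup_{t\le\tau_m}\sup_x|u_m|$ to have a bound independent of $m$ on each $\{\tau_m<T\}$ — contradiction with the definition of $\tau_m$ unless $P(\tau_m<T)\to0$ — hence $\tau_m\uparrow\infty$ and $u:=\lim u_m$ is a global solution in $\cH_{p,loc}^{1-\kappa}$.

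For uniqueness I would argue on $[0,\tau_m]$ by the usual $\cH_p^{1-\kappa}(\tau)$-energy estimate for the difference of two solutions — both nonlinearities are locally Lipschitz on the set where both solutions are bounded, so Theorem~\ref{embedding} \eqref{gronwall_type_ineq} with $N_0=0$ yields $u\equiv v$ up to $\tau_m$, then let $m\to\infty$. Finally, the H\"older statement \eqref{holder_regularity_main_theorem_small_lambda} is immediate: apply Corollary~\ref{embedding_corollary} with the given $\kappa\in((\lambda d)\vee(\lambda_0 d),1)$ and $p>\frac{d+2}{1-\kappa}$ (which guarantees the interval in \eqref{condition_for_alpha_beta} is nonempty, exactly as in Remark~\ref{existence_of_alpha_beta}) to the solution $u\in\cH_p^{1-\kappa}(T)$, giving $\bE\|u\|_{C^{\alpha-1/p}([0,T];C^{1-\kappa-2\beta-d/p})}^p<\infty$ and hence finiteness a.s.

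The main obstacle I anticipate is twofold and both parts live in the $L_1$-to-$\cH_p$ passage. First, justifying the Itô formula for $\int u_m\,dx$ rigorously: $u_m$ lies in $\cH_p^{1-\kappa}$, not in a space of smooth functions, so one must mollify, apply Itô in the mollified variables, carefully track that the mollified drift term from $\bar b^i$ still integrates to zero (the independence of $\bar b^i$ from $x^i$ must survive mollification, which it does if one mollifies only in the $\bar x^i$ directions or accepts a vanishing error), and pass to the limit — controlling the boundary terms at spatial infinity requires the $L_1(\bR^d)$ integrability one is trying to prove, so the estimate must be run on truncated solutions where a priori $L_p$ bounds are already available, and then closed. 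Second, and more delicate, is getting the \emph{nonlinear} terms to be genuinely lower order: one needs $\||u|^{1+\lambda_0}\|_{\bH_p^{\gamma+1}(\ell_2)}$ and $\|\bar b^i|u|^\lambda u_{x^i}\|_{\bH_p^{\gamma}}$ bounded by $\|u\|_{\cH_p^{1-\kappa}}^{\theta}$ times a power of $\|u\|_{L_1}$ with $\theta<1$ (or an additive splitting into a small fraction of the top norm plus an $L_1$-controlled remainder), and this is precisely where the constraint $\lambda,\lambda_0<1/d$ is sharp — it is the exponent for which the Bessel kernel singularity $|x|^{(1-\kappa)-d}$ from Remark~\ref{Kernel} remains locally integrable against an $L_1$ density after the relevant Hölder/interpolation bookkeeping. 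Making this interpolation chain airtight, with all exponents matching the hypotheses $1>\kappa>(\lambda d)\vee(\lambda_0 d)$ and $p>\frac{d+2}{1-\kappa}$, is the technical crux of the proof.
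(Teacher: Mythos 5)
Your overall architecture coincides with the paper's: truncate the nonlinearities to obtain nonnegative local solutions $u_m$ (Lemma \ref{cut_off_lemma_small_lambda}), derive a uniform $L_1(\bR^d)$ bound by exploiting the independence of $\bar b^i$ from $x^i$, convert that bound into an $\cH_p^{1-\kappa}$ estimate of the nonlinear terms via the Bessel kernel $R_\kappa$ (the condition $\kappa>(\lambda d)\vee(\lambda_0 d)$ being exactly what makes $R_\kappa\in L_{1/(1-\lambda)}\cap L_{1/(1-\lambda_0)}$), close with Gr\"onwall, and conclude non-explosion, pasting, uniqueness by truncation, and H\"older regularity via Corollary \ref{embedding_corollary}. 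Two steps in your sketch, however, would not close as written. First, the $L_1$ bound: testing against the constant function and ``taking expectations at fixed $t$ plus Gr\"onwall'' yields only $\bE\|u_m(t,\cdot)\|_{L_1}\leq N\bE\|u_0\|_{L_1}$ for each fixed $t$, whereas the non-explosion argument needs $\bE\sup_{t\leq T}\|u_m(t,\cdot)\|_{L_1}^{1/2}$, and Gr\"onwall (in particular Theorem \ref{embedding} \eqref{gronwall_type_ineq}, which concerns $\cH_p^{\gamma+2}$ norms) does not produce a maximal-in-time bound. The paper instead shows that $e^{-4Kt}\int_{\bR^d} u_m(t,x)\psi_k(x)dx$ is a nonnegative supermartingale, with $\psi_k(x)=1/\cosh(|x|/k)$ the weight of Lemma \ref{test_function_for_small_lambda}, and invokes the maximal inequality for nonnegative supermartingales (Lemma \ref{L_1_bound}). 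The exponentially decaying weight is also what disposes of your worry about boundary terms at spatial infinity: every integration by parts lands on $\psi_k$, and the resulting operator applied to $\psi_k$ is nonpositive once $k$ is large relative to $m$. Your alternative of running the estimate on truncated solutions ``where a priori $L_p$ bounds are already available'' does not close on $\bR^d$, since $L_p$ control gives no $L_1$ control on an unbounded domain.

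Second, the Gr\"onwall closure. In \eqref{main_computation_Non_explosion_small_lambda} the coefficient multiplying the lower-order term $\|u_m\|_{\bL_p}^p$ is $\sum_{\gamma=\lambda,\lambda_0}\|u_m(s,\cdot)\|_{L_1}^{p\gamma}\|R_\kappa\|_{L_{1/(1-\gamma)}}^p$, which is random, while Theorem \ref{embedding} \eqref{gronwall_type_ineq} requires a deterministic constant $N_1$. The paper therefore localizes at $\tau_m(S):=\inf\{t\geq0:\|u_m(t,\cdot)\|_{L_1}\geq S\}\wedge T$, obtains a bound depending on $S$ but uniform in $m$, and only afterwards removes $S$ using $P(\tau_m(S)<T)\leq NS^{-1/2}$ (a consequence of the $L_1$ maximal bound and Chebyshev), sending $R\to\infty$ and then $S\to\infty$ in Lemma \ref{Non_explosion_small_lambda}. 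Your sketch omits this localization, and without it the uniform-in-$m$ constant $N(\|u_0\|,T)$ you assert is not available. Both gaps are repairable with standard tools --- and you correctly identified the second passage as the technical crux --- but the supermartingale weight $\psi_k$ and the stopping time $\tau_m(S)$ are the two specific devices your write-up is missing.
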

\begin{proof}
See {\bf{Proof of Theorem \ref{theorem_burgers_eq_white_noise_in_time_small_lambda}}} in Section \ref{proof_of_theorem_white_noise_in_time_small_lambda}.
\end{proof}

\begin{remark}
\label{remark: discussion of regularity in super-linear case}



In the proof of Theorem \ref{theorem_burgers_eq_white_noise_in_time_small_lambda}, we employ different methods compared with Theorem \ref{theorem_burgers_eq_white_noise_in_time_large_lambda}.
To prove Theorem \ref{theorem_burgers_eq_white_noise_in_time_large_lambda}, we use It\^o's formula for the $\bL_{p(1+\lambda)}(\tau)$ bound of the solution. However, it is not easy to employ this method for Theorem \ref{theorem_burgers_eq_white_noise_in_time_small_lambda} since the super-linear diffusion coefficient $\mu^k |u|^{1+\lambda_0}$ causes imbalanced exponents. More precisely, if we apply It\^o's formula to equation \eqref{burger's_eq_white_noise_in_time_small} to obtain the $L_q$ bound for some $q>1$, we have
\begin{equation}
\label{when we apply ito formula with super-linear diffusion coefficient}
\begin{aligned}
&\bE e^{-M\tau_{m,n}\wedge t}\int_{\bR^d} |u(\tau_{m,n} \wedge t,x)|^q dx \\
&\quad \leq  N\bE \int_{\bR^d} |u_0(x)|^q dx + N\bE \int_0^{\tau_{m,n}\wedge t}\int_{\bR^d} \left|u(s,x)\right|^{q+\lambda} e^{-Ms}dxds
\end{aligned}
\end{equation}
instead of \eqref{before final estimate}. Notice that the last term of \eqref{when we apply ito formula with super-linear diffusion coefficient} cannot be controlled easily with the usual integral inequality. Therefore, we consider a uniform $L_1(\bR^d)$ bound of the solution by using the conservation law.

\end{remark}

\begin{remark}
\label{bar_b_i_indep_of_x_i_2} 

Since the coefficient $\bar b^i$ is independent of $x^i$ (Assumption \ref{assumptions_on_coefficients_deterministic_and_nonlinear_part}), we can construct a global solution. To be specific, we build a candidate of a global solution by pasting the local solutions, and we employ the uniform $L_1(\bR^d)$ bound of the local solutions to prove the candidate is a global solution. To obtain the $L_1(\bR^d)$ norm of the local solutions uniformly bounded in $t$, we prove that the $L_1(\bR^d)$ norm of the local solution is a local martingale. In the $L_1(\bR^d)$ estimate, the nonlinear component $\bar b^i |u|^\lambda u_{x^i}$ vanishes using the fundamental theorem of calculus since the nonlinear term is interpreted as
$$ \bar b^i |u|^\lambda u_{x^i} = \frac{1}{1+\lambda}\bar b^i \left( |u|^{1+\lambda} \right)_{x^i}.
$$
For more detail, see Lemma \ref{L_1_bound}.

\end{remark}

\begin{remark}
Since $p>\frac{d+2}{1-\kappa}$, there exist $\alpha,\beta$ that satisfy \eqref{condition_for_alpha_beta}.

\end{remark}



\begin{remark}
\label{remark:condition_on_lambda_2}
The algebraic conditions $\lambda,\lambda_0\in(0,1/d)$ are sufficient conditions for the existence of a global solution. To demonstrate this assertion, consider
\begin{equation}
\label{motivation_equation}
dv = \left(a^{ij}v_{x^ix^j} + b^i v_{x^i} + cv + \bar{b}^i\left( \xi v \right)_{x^i}\right) dt + \mu^k\xi_0\,v\, dw_t^k,\quad t > 0\,; \quad v(0,\cdot) = u_0(\cdot),
\end{equation}
where $\xi$ and $\xi_0$ are $\cP\times\cB(\bR^d)$-measurable functions. Formally, we can regard \eqref{burger's_eq_white_noise_in_time_small} as an equation type of \eqref{motivation_equation} by setting $\xi = |u|^{\lambda}$ and $\xi_0 = |u|^{\lambda_0}$.

In Lemma \ref{lemma: sol of eq with xi}, it emerges that if
\begin{equation}
\label{conditions_on_xi}
\xi\in L_\infty\left(\opar0,T\cbrk;L_s\right)\quad\text{and}\quad\xi_0\in L_\infty\left(\opar0,T\cbrk;L_{s_0}\right)\quad (s,s_0>d),
\end{equation} 
then for $\kappa\in(d/s \vee d/s_0, 1)$, there is a unique solution $v\in\cH_{p}^{1-\kappa}(T)$ to equation \eqref{motivation_equation}. Thus, when we consider \eqref{burger's_eq_white_noise_in_time_small} as \eqref{motivation_equation}, if we set $\lambda = 1/s$ and $\lambda_0 = 1/s_0$, then $\xi = |u|^\lambda$ and $\xi_0 = |u|^{\lambda_0}$ satisfy \eqref{conditions_on_xi} since the uniform $L_1$ bound of $u$ is provided in Lemma \ref{L_1_bound}. Here, $\lambda$ and $\lambda_0$ should be smaller than $1/d$ because 
$1/\lambda = s > d$ and $1/\lambda_0 = s_0 > d$. For a more detailed calculation, see the introduction to Section \ref{proof_of_theorem_white_noise_in_time_small_lambda}.

\end{remark}

\begin{remark}
\label{remark:condition_on_lambda_3}

Observe that $\lambda$ and $\lambda_0$ affect the regularity of the solution. In addition, the $L_p$ regularity of the solution in Theorem \ref{theorem_burgers_eq_white_noise_in_time_small_lambda} is less than $1-(\lambda d)\vee(\lambda_0 d)$, whereas that of Theorem \ref{theorem_burgers_eq_white_noise_in_time_large_lambda} is $1$. Furthermore, if we let $\lambda_0\downarrow0$, the regularity of the solution in Theorem \ref{theorem_burgers_eq_white_noise_in_time_small_lambda} is $1-\lambda d$, and it does not coincide with that of Theorem \ref{theorem_burgers_eq_white_noise_in_time_large_lambda}. However, if we consider $\lambda\downarrow 0$, then the regularity of solution is $1-\lambda_0 d$, and, in principle, it agrees with the regularity result of the stochastic partial differential equation driven by colored noise with a super-linear diffusion coefficient; see \cite[Theorem 3.10]{choi2021regularity}. These relationships give an impression that the proof method is primarily focused on addressing the super-linearity of the diffusion coefficient.

To compare this case to others with a similar type of equation, we consider an equation driven by space-time white noise with a super-linear diffusion coefficient. As implied in \cite[Section 8.4]{kry1999analytic} and \cite{mueller2000critical}, the $L_q(\bR^d)$ bound of the solution with $q>1$ cannot be obtained if the diffusion coefficient is super-linear. Thus, it seems natural that the $L_q(\bR^d)$ bound of the solution with $q>1$ cannot be achieved even in the case of an equation driven by colored noise. Therefore, there is a restriction of $\lambda$; see Remark \ref{remark:condition_on_lambda_2}. Thus, we expect the regularity results of Theorem \ref{theorem_burgers_eq_white_noise_in_time_small_lambda} to be nearly optimal, and we plan to confirm this assertion in future research.

\end{remark}

\begin{remark}
The H\"older regularity of the solution introduced in Theorem \ref{theorem_burgers_eq_white_noise_in_time_small_lambda} depends on $\alpha$ and $\beta$. As in Remark \ref{remark:Holder_regularity}, 
we have
\begin{equation*}
\sup_{t\leq T}|u(t,\cdot)|_{C^{1-\kappa-\frac{d+2}{p}-\ep}(\bR^d)} +\sup_{x\in\bR^d}|u(\cdot,x)|_{C^{\frac{1}{2}-\frac{d+2}{2p}-\ep}([0,T])}  < \infty
\end{equation*}
almost surely.

\end{remark}

As in Section \ref{subsec:Lipschitz_diffusion_case}, we prove the uniqueness of the solution in $p$. It is employed to obtain the H\"older regularity of the solution.

\begin{theorem}
\label{uniqueness_in_p_2}
Assume that all the conditions of Theorem \ref{theorem_burgers_eq_white_noise_in_time_small_lambda} holds. Let $u\in \cH_{p,loc}^{1-\kappa}$ be the solution to equation \eqref{burger's_eq_white_noise_in_time_small}, which is introduced in Theorem \ref{theorem_burgers_eq_white_noise_in_time_small_lambda}. If $r>p$ and $u_0\in U_r^{1-\kappa}\cap L_{1}(\Omega;L_1)$, then $u\in \cH_{r,loc}^{1-\kappa}$.
\end{theorem}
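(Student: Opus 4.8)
The plan is to run the same bootstrap argument as in the proof of Theorem~\ref{uniqueness_in_p_1}, adapted to the super-linear setting. First I would fix $r>p$ and, by the definition of $\cH_{p,loc}^{1-\kappa}$, take a localizing sequence of bounded stopping times $\tau_n\uparrow\infty$ with $u\in\cH_p^{1-\kappa}(\tau_n)$ for each $n$; it suffices to show $u\in\cH_r^{1-\kappa}(\tau_n)$ for each $n$, so I may replace $\tau_n$ by $\tau$, a generic bounded stopping time, and work on $[0,\tau]$ throughout. Since $u_0\in U_r^{1-\kappa}$ and $r>p$, we automatically have $u_0\in U_p^{1-\kappa}$, so the solution $u$ from Theorem~\ref{theorem_burgers_eq_white_noise_in_time_small_lambda} is well defined and is the unique $\cH_{p,loc}^{1-\kappa}$-solution. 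The key structural fact to exploit is the same one used to build the global solution: because $\bar b^i$ is independent of $x^i$, the $L_1$-bound of $u$ in terms of $\|u_0\|_{L_1(\Omega;L_1)}$ (Lemma~\ref{L_1_bound}) holds, and this $L_1$-bound is $p$-free --- it does not degrade when we raise the exponent. Equivalently, setting $\xi=|u|^\lambda$, $\xi_0=|u|^{\lambda_0}$, the conditions \eqref{conditions_on_xi} hold with $s=1/\lambda$, $s_0=1/\lambda_0$ independently of which $L_r$-theory we are running, so the freezing-coefficient linear equation \eqref{motivation_equation} makes sense in $\cH_r^{1-\kappa}(\tau)$ as well as in $\cH_p^{1-\kappa}(\tau)$.

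Next I would invoke the linear $L_r$-theory (the same result, at exponent $r$, that underlies Theorem~\ref{theorem_burgers_eq_white_noise_in_time_small_lambda}) applied to \eqref{motivation_equation} with these \emph{given} coefficients $\xi,\xi_0$ coming from the already-constructed $u$: there is a unique $v\in\cH_r^{1-\kappa}(\tau)$ solving \eqref{motivation_equation} with initial data $u_0\in U_r^{1-\kappa}$, provided the free terms $\bar b^i(\xi v)_{x^i}$ and $\mu^k\xi_0 v$ land in the right spaces, which they do by the pointwise-multiplier estimate (Lemma~\ref{prop_of_bessel_space}\eqref{pointwise_multiplier}) together with \eqref{conditions_on_xi} and a Sobolev/multiplicative-inequality interpolation exactly as in the $p$-theory. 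Then $v$, viewed in $\cH_p^{1-\kappa}(\tau)$ (legitimate since $v\in\cH_r^{1-\kappa}(\tau)\subset\cH_p^{1-\kappa}(\tau)$ after localizing so that all norms are finite --- note $\bH_r\hookrightarrow\bH_p$ on a finite measure space only after a H\"older step, so I would instead argue directly that $v$ solves \eqref{motivation_equation} in $\cH_p^{1-\kappa}(\tau)$), satisfies the same linear equation as $u$ in the $\cH_p^{1-\kappa}(\tau)$-theory with the same coefficients and same initial data. By uniqueness in $\cH_p^{1-\kappa}(\tau)$ for the \emph{linear} equation \eqref{motivation_equation} (with $\xi,\xi_0$ fixed), we conclude $u=v$ in $\cH_p^{1-\kappa}(\tau)$, hence $u=v\in\cH_r^{1-\kappa}(\tau)$. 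Letting $\tau=\tau_n\uparrow\infty$ gives $u\in\cH_{r,loc}^{1-\kappa}$.

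The main obstacle is the bookkeeping in the embedding step: one cannot simply write $\bH_r^{\gamma}(\tau)\subset\bH_p^{\gamma}(\tau)$ for $r>p$, because although $dP\times dt$ is a finite measure on $\opar0,\tau\cbrk$, the spatial integral over $\bR^d$ is not finite, so $H_r^\gamma(\bR^d)\not\hookrightarrow H_p^\gamma(\bR^d)$. The correct route --- the one I expect the paper to take --- is not to embed function spaces but to check directly that the $\cH_r^{1-\kappa}(\tau)$-solution $v$ of the frozen linear equation is \emph{also} an $\cH_p^{1-\kappa}(\tau)$-solution of the \emph{same} equation: this requires verifying that the free data $f=Lv+\bar b^i(\xi v)_{x^i}+\dots$ determined by $v$, which a priori lies only in $\bH_r^{-1-\kappa}(\tau)$, additionally lies in $\bH_p^{-1-\kappa}(\tau)$, and similarly for the diffusion term. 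This is where the $L_1$-bound and the algebraic constraints $\lambda,\lambda_0<1/d$ re-enter: they guarantee $\xi\in L_\infty(\opar0,\tau\cbrk;L_{1/\lambda})$ with $1/\lambda>d$ and likewise for $\xi_0$, so the multiplier estimates and interpolation that produce membership in $\bH_p^{\cdot}(\tau)$ go through uniformly. Once this is arranged, uniqueness in the $p$-theory closes the loop exactly as in Theorem~\ref{uniqueness_in_p_1}, and the remaining computations are routine repetitions of those already carried out in Sections~\ref{proof_of_theorem_white_noise_in_time_large_lambda} and~\ref{proof_of_theorem_white_noise_in_time_small_lambda}.
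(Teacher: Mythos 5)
Your overall skeleton (freeze the nonlinearity using the already-constructed $u$, compare with the solution of the resulting linear equation at the higher exponent, close with a uniqueness argument) is the same as the paper's, but the direction of your key identification step is reversed, and in that direction there is a genuine gap. You propose to show that the $\cH_r^{1-\kappa}(\tau)$-solution $v$ of the frozen equation is \emph{also} an $\cH_p^{1-\kappa}(\tau)$-solution and then invoke uniqueness at level $p$. You correctly note that $H_r^{\gamma}(\bR^d)\not\hookrightarrow H_p^{\gamma}(\bR^d)$ for $p<r$, but your proposed repair --- checking that the free terms $\bar b^i(\xi v)_{x^i}$ and $\mu^k\xi_0 v$ lie in $\bH_p^{-1-\kappa}(\tau)$ and $\bH_p^{-\kappa}(\tau,\ell_2)$ --- does not address the real obstruction: by Definition \ref{definition_of_sol_space}, being an $\cH_p^{1-\kappa}(\tau)$-solution requires $v\in\bH_p^{1-\kappa}(\tau)$ itself, i.e.\ $\bE\int_0^\tau\|v(t,\cdot)\|_{H_p^{1-\kappa}}^p\,dt<\infty$, and nothing you have established places $v$ in that space. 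A bounded function in $H_r^{1-\kappa}(\bR^d)$ need not belong to $L_p(\bR^d)$ for $p<r$ because the spatial domain has infinite measure; the $L_1$ bound of Lemma \ref{L_1_bound} is a bound on $u$, not on $v$ (and you cannot assume $v=u$ to transfer it, since that identity is what you are trying to prove).

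The paper runs the identification in the opposite, workable direction: it upgrades $u$ rather than downgrades $v$. After introducing the hitting times $\tau_{m,n}$ of the level set $\{\sup_x|u|\geq m\}$ (legitimate by Corollary \ref{embedding_corollary} since $p>\frac{d+2}{1-\kappa}$), one has the elementary but crucial estimate \eqref{for_any_p0},
\begin{equation*}
\bE\int_0^{\tau_{m,n}}\int_{\bR^d}|u|^{p_0}\,dx\,dt\leq m^{p_0-p}\|u\|_{\bL_p(\tau_{m,n})}^p<\infty\quad\text{for all }p_0\geq p,
\end{equation*}
so $u$ itself lies in $\bL_r(\tau_{m,n})$ and the frozen free terms lie in the $r$-scale spaces; hence $u$ is an $\cL_r(\tau_{m,n})$-solution of the frozen linear equation, the difference $u-v$ solves the homogeneous deterministic equation with zero data in the $r$-theory, and Theorem \ref{theorem_nonlinear_case} at exponent $r$ gives $u=v\in\cH_r^{1-\kappa}(\tau_{m,n})$. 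Passing from low to high integrability using the pointwise bound $m$ is what makes the argument close; your route from high to low integrability cannot be completed without some additional decay information on $v$ that is not available. I would also note that your write-up omits this stopping-time truncation entirely, which is the mechanism that makes all the higher-exponent norms finite in the first place.
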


\begin{corollary}
\label{maximal holder regularity2}
Suppose $u_0\in U_p^{1-(\lambda d)\vee (\lambda_0 d)}\cap L_1(\Omega;L_1)$ for all $p>2$. Then, for small $\ep>0$, we have
\begin{equation}
\label{holder_regularity_small_lambda}
\sup_{t\leq T}|u(t,\cdot)|_{C^{1-(\lambda d)\vee (\lambda_0 d)-\ep}(\bR^d)} + \sup_{x\in\bR^d}|u(\cdot,x)|_{C^{\frac{1-(\lambda d)\vee (\lambda_0 d)}{2}-\ep}([0,T])} < \infty \quad\text{for all}\quad T<\infty,
\end{equation}
almost surely.

\end{corollary}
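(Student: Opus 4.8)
The plan is to derive Corollary~\ref{maximal holder regularity2} from Theorem~\ref{theorem_burgers_eq_white_noise_in_time_small_lambda} and Theorem~\ref{uniqueness_in_p_2} by an argument parallel to the proof of Corollary~\ref{maximal holder regularity1}. First I would fix the regularity index $\kappa_0 := (\lambda d) \vee (\lambda_0 d) \in (0,1)$. The hypothesis gives $u_0 \in U_p^{1-\kappa_0} \cap L_1(\Omega;L_1)$ for every $p > 2$. Given a small $\ep > 0$, I pick $p := \frac{d+2}{\ep}$ (so that $p \to \infty$ as $\ep \to 0$) and then choose an auxiliary smoothing index $\kappa$ with $\kappa_0 < \kappa < 1$ close enough to $\kappa_0$ that $p > \frac{d+2}{1-\kappa}$ still holds; this is possible for $\ep$ small since $\frac{d+2}{1-\kappa_0} < \infty$ and the constraint $p > \frac{d+2}{1-\kappa}$ is an open condition in $\kappa$. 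With such $(\kappa,p)$, Theorem~\ref{theorem_burgers_eq_white_noise_in_time_small_lambda} produces a unique nonnegative solution $u = u_p \in \cH_{p,loc}^{1-\kappa}$ of \eqref{burger's_eq_white_noise_in_time_small}. Since $u_0 \in U_r^{1-\kappa} \cap L_1(\Omega;L_1)$ for all $r > p$ (here I also need $1 - \kappa \le 1 - \kappa_0$, so $U_p^{1-\kappa_0} \hookrightarrow U_p^{1-\kappa}$ by Lemma~\ref{prop_of_bessel_space}\eqref{norm_bounded}), Theorem~\ref{uniqueness_in_p_2} shows all these solutions coincide; call the common object $u$.

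Next I would feed this into the H\"older embedding Corollary~\ref{embedding_corollary} twice, once to extract spatial regularity and once for temporal regularity, exactly as in the proof of Corollary~\ref{maximal holder regularity1} but with the shifted base index $1 - \kappa$ in place of $1$. For the spatial direction, in Corollary~\ref{embedding_corollary} I take the same $\kappa$, $p = \frac{d+2}{\ep}$, and choose $\alpha = \frac1p + \frac{\ep}{8}$, $\beta = \frac1p + \frac{\ep}{4}$; one checks $\frac1p < \alpha < \beta < \frac12\left(1 - \kappa - \frac dp\right)$ holds for $\ep$ small (the right-hand side tends to $\frac{1-\kappa_0}{2} > 0$ while $\beta \to 0$), and then
\begin{equation*}
\sup_{t\leq T} |u(t,\cdot)|_{C^{1-\kappa-2\beta-d/p}(\bR^d)}^p \leq |u|_{C^{\alpha-1/p}\left([0,T];C^{1-\kappa-2\beta-d/p}(\bR^d)\right)}^p < \infty
\end{equation*}
for all $T < \infty$ almost surely, by \eqref{sol_embedding}. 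Since $1 - \kappa - 2\beta - d/p = 1 - \kappa - \frac{d+2}{p} - \frac{\ep}{2} \to 1 - \kappa_0$ as $\ep \to 0$ (choosing $\kappa \to \kappa_0$), and the exponent is continuous and increasing as $\ep$ shrinks, this gives $\sup_{t\leq T}|u(t,\cdot)|_{C^{1-\kappa_0-\ep'}(\bR^d)} < \infty$ for every $\ep' > 0$ after relabeling. For the temporal direction I instead take $\alpha = \frac12\left(1 - \frac dp\right) - \frac{\ep}{2}$ and $\beta = \frac12\left(1 - \frac dp\right) - \frac{\ep}{4}$ — again admissible in \eqref{condition_for_alpha_beta} for $\ep$ small since $\kappa \to \kappa_0$ forces no conflict once $\ep$ is tiny relative to $1 - \kappa_0$ — so that $\alpha - \frac1p = \frac12 - \frac{d+2}{2p} - \frac{\ep}{2}$, yielding
\begin{equation*}
\sup_{x\in\bR^d}|u(\cdot,x)|_{C^{\frac12 - \frac{d+2}{2p} - \frac{\ep}{2}}([0,T])} \leq |u|_{C^{\alpha-1/p}\left([0,T];C^{1-\kappa-2\beta-d/p}(\bR^d)\right)} < \infty
\end{equation*}
almost surely, and $\frac12 - \frac{d+2}{2p} - \frac{\ep}{2} = \frac12 - \frac{\ep}{2} - \frac{\ep}{2} = \frac12 - \ep \to \frac12$; comparing with the target $\frac{1 - \kappa_0}{2}$, note $\frac12 \ge \frac{1-\kappa_0}{2}$, so the temporal exponent $\frac{1-\kappa_0}{2} - \ep'$ is certainly attained.

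Finally I would assemble the two bounds: for $\ep' > 0$ small, choosing $\ep$ (and the associated $\kappa, p, \alpha, \beta$) appropriately in the two applications above gives both $\sup_{t\le T}|u(t,\cdot)|_{C^{1-\kappa_0-\ep'}(\bR^d)} < \infty$ and $\sup_{x\in\bR^d}|u(\cdot,x)|_{C^{\frac{1-\kappa_0}{2}-\ep'}([0,T])} < \infty$ for all $T < \infty$ almost surely, which is precisely \eqref{holder_regularity_small_lambda}. The main obstacle — more bookkeeping than genuine difficulty — is verifying that one can simultaneously satisfy the two coupled open conditions $\kappa > \kappa_0$ and $p > \frac{d+2}{1-\kappa}$ while driving the resulting H\"older exponents up to $1 - \kappa_0$ and $\frac{1-\kappa_0}{2}$ respectively; this is handled by sending $\kappa \downarrow \kappa_0$ and $p \uparrow \infty$ jointly (e.g. $\kappa = \kappa_0 + \ep$, $p = \frac{d+2}{\ep}$) and checking the inequalities hold for all sufficiently small $\ep$, since every constraint involved is strict and the limiting quantities are strictly separated. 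One should also note that the solution $u$ obtained for different small $\ep$ are consistent — they are all restrictions of the single solution produced by Theorem~\ref{theorem_burgers_eq_white_noise_in_time_small_lambda} at the smallest relevant $p$, by the uniqueness in $p$ of Theorem~\ref{uniqueness_in_p_2} — so the almost-sure statements can be intersected over a countable sequence $\ep' \downarrow 0$ without losing the full-measure set.
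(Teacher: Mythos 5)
Your overall strategy is exactly the paper's: the paper's proof of this corollary is literally ``repeat the proof of Corollary \ref{maximal holder regularity1}, using Theorems \ref{theorem_burgers_eq_white_noise_in_time_small_lambda} and \ref{uniqueness_in_p_2} in place of Theorems \ref{theorem_burgers_eq_white_noise_in_time_large_lambda} and \ref{uniqueness_in_p_1}.'' Your handling of the spatial direction, of the embedding $U_p^{1-\kappa_0}\hookrightarrow U_p^{1-\kappa}$, and of the consistency of the solutions across different $p$ via uniqueness in $p$ is all correct and is what the paper intends.

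However, your temporal-direction step contains a genuine error. You take $\alpha = \tfrac12\left(1-\tfrac dp\right)-\tfrac{\ep}{2}$ and $\beta = \tfrac12\left(1-\tfrac dp\right)-\tfrac{\ep}{4}$, copied verbatim from the $\kappa=0$ case of Corollary \ref{maximal holder regularity1}. But condition \eqref{condition_for_alpha_beta} demands $\beta < \tfrac12\left(1-\kappa-\tfrac dp\right)$, which for your $\beta$ reads $-\tfrac{\ep}{4} < -\tfrac{\kappa}{2}$, i.e.\ $\ep > 2\kappa$. Since $\kappa > (\lambda d)\vee(\lambda_0 d) > 0$ is bounded away from zero while $\ep$ is small, this condition \emph{fails}, so Corollary \ref{embedding_corollary} cannot be applied with these parameters; your assertion that the choice is ``admissible for $\ep$ small'' is the opposite of the truth. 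Relatedly, the temporal exponent $\tfrac12-\ep$ you claim to obtain is strictly better than the $\tfrac{1-(\lambda d)\vee(\lambda_0 d)}{2}-\ep$ stated in the corollary, and the method cannot deliver it: the constraint $\alpha<\beta<\tfrac12\left(1-\kappa-\tfrac dp\right)$ caps the attainable temporal exponent $\alpha-\tfrac1p$ strictly below $\tfrac{1-\kappa}{2}$. The repair is straightforward and is what the paper's template requires: take $\alpha = \tfrac12\left(1-\kappa-\tfrac dp\right)-\tfrac{\ep}{2}$ and $\beta = \tfrac12\left(1-\kappa-\tfrac dp\right)-\tfrac{\ep}{4}$, which are admissible for small $\ep$ and yield $\alpha-\tfrac1p = \tfrac{1-\kappa}{2}-\tfrac{d+2}{2p}-\tfrac{\ep}{2} \to \tfrac{1-\kappa_0}{2}$ as $\ep\downarrow 0$ and $\kappa\downarrow\kappa_0$, exactly matching \eqref{holder_regularity_small_lambda}.
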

\begin{proof}
Since the proof of the corollary is similar to that for of Corollary \ref{maximal holder regularity1}, we only identify the differences. We employ Theorems \ref{theorem_burgers_eq_white_noise_in_time_small_lambda} and \ref{uniqueness_in_p_2}, instead of Theorems \ref{theorem_burgers_eq_white_noise_in_time_large_lambda} and \ref{uniqueness_in_p_1}, respectively.

\end{proof}

\vspace{2mm}

\section{Proof of the first case: \texorpdfstring{$\lambda\in(0,\infty)$}{Lg} and the Lipschitz diffusion coefficient }
\label{proof_of_theorem_white_noise_in_time_large_lambda}

In this section, proof of Theorem \ref{theorem_burgers_eq_white_noise_in_time_large_lambda} is provided. In Theorem \ref{theorem_nonlinear_case}, we first recall the $L_p$ regularity results of semilinear stochastic partial differential equation
\begin{equation} \label{nonlinear_equation}
du = \left(a^{ij}u_{x^ix^j} + b^i u_{x^i} + cu + f(u)\right) dt + g^k(u) dw_t^k,\quad (t,x)\in(0,\infty)\times\bR^d\,; \quad u(0,\cdot) = u_0(\cdot),
\end{equation}
where $f$ and $g$ satisfy Assumption \ref{assumption_on_f_and_g}. Based on Theorem \ref{theorem_nonlinear_case}, by cutting-off nonlinear term $\bar b^i \left(|u|^{1+\lambda}\right)_{x^i}$ and defining a stopping time using the first hitting time, we obtain a local solution to equation \eqref{burger's_eq_white_noise_in_time}; see Lemmas \ref{cut_off_lemma_large_lambda} and \ref{local_existence}. To extend the solution's existence time, its $\bL_{p(1+\lambda)}$ bound is required. Therefore, we show that $\| u \|_{\bL_q(\tau)}$ ($q>p$) is bounded by a constant independent of $u$ and $\tau$; see Lemma \ref{Lq_bound_infinite_noise}. At the end of this section, proofs for Theorems \ref{theorem_burgers_eq_white_noise_in_time_large_lambda} and \ref{uniqueness_in_p_1} are suggested.

\vspace{2mm}

Below are the assumptions regarding $f$ and $g$.

\vspace{2mm}

\begin{assumption}[$\tau$] \label{assumption_on_f_and_g}

Let $\tau\leq T$ be a bounded stopping time.

\begin{enumerate}[(i)]

\item The functions $f(t,x,u)$ and $g^k(t,x,u)$ are $\cP\times\cB(\bR^d)\times\cB(\bR)$-measurable satisfying $f(t,x,0)\in \bH_{p}^{\gamma}(\tau)$ and $g(t,x,0) = \left( g^1(t,x,0),g^2(t,x,0),\dots \right) \in \bH_{p}^{\gamma+1}(\tau,\ell_2).$



\item
For any $\ep>0$, there exists a constant $N_\ep$ such that
\begin{equation}
\label{conditions_on_f_and_g}
\| f(u) - f(v) \|_{\bH_{p}^\gamma(\tau)} + \| g(u) - g(v) \|_{\bH_{p}^{\gamma+1}(\tau,\ell_2)} \leq \ep\| u-v \|_{\bH_p^{\gamma+2}(\tau)} + N_\ep \| u-v \|_{\bH_p^{\gamma+1}(\tau)}
\end{equation}
for any $u,v\in \bH_p^{\gamma+2}(\tau)$.

\end{enumerate}
\end{assumption}

The next result is the $L_p$-solvability of equation \eqref{nonlinear_equation}.

\begin{theorem} 
\label{theorem_nonlinear_case}
Let $\tau\leq T$ be a bounded stopping time, $\gamma\in[-2,-1]$, and $p\geq2$.  Suppose Assumptions \ref{ellipticity_of_leading_coefficients_white_noise_in_time} and \ref{assumption_on_f_and_g} ($\tau$) hold. Then, for any $u_0\in U_p^{\gamma+2}$, equation \eqref{nonlinear_equation} has a unique solution $u\in\cH_p^{\gamma+2}(\tau)$ such that
\begin{equation} 
\label{nonlinear_estimate}
\| u \|_{\cH_p^{\gamma+2}(\tau)} \leq N\left(\|f(0)\|_{\bH_p^{\gamma}(\tau)} + \|g(0)\|_{\bH_p^{\gamma+1}(\tau,\ell_2)} + \| u_0 \|_{U_p^{\gamma+2}}\right),
\end{equation}
where $N = N(d,p,\gamma,K,T,N_\ep)$. 
\end{theorem}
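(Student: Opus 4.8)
The plan is to reduce this to Krylov's linear $L_p$-theory of SPDEs and then run a Picard iteration for the semilinear perturbations $f(u),g(u)$; the statement is essentially contained in \cite{kry1999analytic}, and I indicate how the present hypotheses feed into it.

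First I would record the linear estimate. Under Assumption \ref{assumptions_on_coefficients_deterministic_and_linear_part} --- in particular the ellipticity \eqref{ellipticity_of_leading_coefficients_white_noise_in_time} and the uniform $C^2$-in-$x$ bounds \eqref{boundedness_of_deterministic_coefficients_white_noise_in_time}, which are far more than the mere continuity actually needed --- Krylov's theory provides, for every $F\in\bH_p^\gamma(\tau)$, $G\in\bH_p^{\gamma+1}(\tau,\ell_2)$ and $u_0\in U_p^{\gamma+2}$, a unique $u\in\cH_p^{\gamma+2}(\tau)$ solving $du=(a^{ij}u_{x^ix^j}+b^iu_{x^i}+cu+F)\,dt+G^k\,dw_t^k$, $u(0,\cdot)=u_0$, with $\|u\|_{\cH_p^{\gamma+2}(\tau)}\le N_0\bigl(\|F\|_{\bH_p^\gamma(\tau)}+\|G\|_{\bH_p^{\gamma+1}(\tau,\ell_2)}+\|u_0\|_{U_p^{\gamma+2}}\bigr)$ for $N_0=N_0(d,p,\gamma,K,T)$, and likewise with $\tau$ replaced by $\tau\wedge t$. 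Every term here is meaningful: by Lemma \ref{prop_of_bessel_space} \eqref{bounded_operator}--\eqref{pointwise_multiplier} the bounded $C^2$-coefficients act as pointwise multipliers on the Bessel potential spaces in play, and Assumption \ref{assumption_on_f_and_g} then guarantees $f(v),g(v)\in\bH_p^\gamma(\tau)\times\bH_p^{\gamma+1}(\tau,\ell_2)$ for every $v\in\bH_p^{\gamma+2}(\tau)$.

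Next I would let $\cR v$ denote the linear solution with free terms $f(v),g(v)$ and initial data $u_0$, so that $\cR\colon\cH_p^{\gamma+2}(\tau)\to\cH_p^{\gamma+2}(\tau)$, and I would study its iterates. For $v_1,v_2$ the difference $w=\cR v_1-\cR v_2$ solves the linear equation with zero initial data and free terms $f(v_1)-f(v_2),\,g(v_1)-g(v_2)$, so the linear bound on $\tau\wedge t$ and \eqref{conditions_on_f_and_g} give $\|w\|_{\cH_p^{\gamma+2}(\tau\wedge t)}\le N_0\ep\,\|v_1-v_2\|_{\bH_p^{\gamma+2}(\tau\wedge t)}+N_0N_\ep\,\|v_1-v_2\|_{\bH_p^{\gamma+1}(\tau\wedge t)}$. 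The \textbf{main obstacle} shows up right here: fixing $\ep$ with $N_0\ep\le\tfrac14$ and absorbing $\|v_1-v_2\|_{\bH_p^{\gamma+2}(\tau\wedge t)}\le\|v_1-v_2\|_{\cH_p^{\gamma+2}(\tau\wedge t)}$ still leaves a lower-order term $\|v_1-v_2\|_{\bH_p^{\gamma+1}(\tau\wedge t)}$, so $\cR$ is not a contraction on all of $[0,\tau]$ and no naive fixed-point theorem applies.

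To eliminate that term I would argue in the spirit of Theorem \ref{embedding} \eqref{gronwall_type_ineq}: Lemma \ref{prop_of_bessel_space} \eqref{multi_ineq} together with Young's inequality replaces $\|v_1-v_2\|_{\bH_p^{\gamma+1}(\tau\wedge t)}$ by $\tfrac14\|v_1-v_2\|_{\cH_p^{\gamma+2}(\tau\wedge t)}+N_1\|v_1-v_2\|_{\bH_p^{\gamma}(\tau\wedge t)}$ (the new $\tfrac14$-term being absorbable too), and then, applying this to the Picard differences $w_n:=\cR^n 0-\cR^{n-1}0$ and bounding $\|w_{n-1}\|_{\bH_p^\gamma(\tau\wedge t)}^p\le\int_0^t\bE\sup_{r\le\tau\wedge s}\|w_{n-1}(r)\|_{H_p^\gamma}^p\,ds$ via the embedding \eqref{solution_embedding}, one reaches the recursion $a_n(t)\le\tfrac12 a_{n-1}(t)+N_2\int_0^t a_{n-1}(s)\,ds$ with $a_n(t):=\|w_n\|_{\cH_p^{\gamma+2}(\tau\wedge t)}^p$; an exponential ansatz $a_n(t)\le C\,(3/4)^n e^{\mu t}$ with $\mu$ large closes the induction, so $\sum_n\|w_n\|_{\cH_p^{\gamma+2}(\tau)}<\infty$ and, by completeness (Theorem \ref{embedding} \eqref{completeness}), $\cR^n 0$ converges to a fixed point $u$ of $\cR$, i.e.\ a solution of \eqref{nonlinear_equation} (passing to the limit in $f,g$ through \eqref{conditions_on_f_and_g} is routine). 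Finally, for \eqref{nonlinear_estimate} I would apply the linear bound to $u=\cR u$, use \eqref{conditions_on_f_and_g} with $v=0$ and the same $\ep$-absorption to arrive at $\|u\|_{\cH_p^{\gamma+2}(\tau\wedge t)}^p\le N_3\bigl(\|f(0)\|_{\bH_p^\gamma(\tau)}+\|g(0)\|_{\bH_p^{\gamma+1}(\tau,\ell_2)}+\|u_0\|_{U_p^{\gamma+2}}\bigr)^p+N_4\|u\|_{\bH_p^{\gamma_0+2}(\tau\wedge t)}^p$ with $\gamma_0:=\gamma-2<\gamma$, and then conclude by Theorem \ref{embedding} \eqref{gronwall_type_ineq}; uniqueness is the same computation for the difference of two solutions, now with vanishing constant term. (The endpoint $p=2$, where \eqref{solution_embedding} and Theorem \ref{embedding} \eqref{gronwall_type_ineq} are unavailable, is instead the classical $L_2$ energy estimate, already contained in Krylov's treatment.)
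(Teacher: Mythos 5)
Your argument is correct in outline, but it takes a genuinely different route from the paper. The paper does \emph{not} reprove the semilinear solvability: it quotes \cite[Theorem 5.1]{kry1999analytic} verbatim for the deterministic time $\tau=T$, and the entire content of its proof is the reduction of the stopping-time case to that one --- for existence by replacing $f,g$ with $\bar f=1_{t\leq\tau}f$, $\bar g=1_{t\leq\tau}g$ (which still satisfy \eqref{conditions_on_f_and_g} on $\bH_p^{\gamma+2}(T)$) and restricting the resulting solution to $\opar0,\tau\cbrk$, and for uniqueness by freezing the free terms at a given solution $v$, solving the frozen equation up to time $T$, and identifying the two on $[0,\tau]$ through the pathwise deterministic equation satisfied by their difference. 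You instead rebuild the semilinear result from the \emph{linear} theory by Picard iteration directly on $\opar0,\tau\cbrk$, with the interpolation--Gr\"onwall device of Theorem \ref{embedding} \eqref{gronwall_type_ineq} absorbing the lower-order term; this is essentially Krylov's own proof of his Theorem 5.1 and, once the linear stopping-time theory is granted, it yields existence, uniqueness and \eqref{nonlinear_estimate} in one sweep, arguably more cleanly than the paper's extension dance. What your route costs is precisely what the paper's proof supplies: the linear solvability and uniqueness on a random interval $\opar0,\tau\cbrk$ are themselves only stated in \cite{kry1999analytic} for deterministic times, so your opening assertion that ``Krylov's theory provides'' the linear estimate ``likewise with $\tau$ replaced by $\tau\wedge t$'' hides the same indicator-truncation and pathwise-deterministic-uniqueness steps that constitute the paper's argument. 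Two further points you should make explicit: Assumption \ref{assumption_on_f_and_g} is stated only for the fixed $\tau$, so the use of \eqref{conditions_on_f_and_g} on $\opar0,\tau\wedge t\cbrk$ needs a word of justification (e.g.\ that the assumption is inherited by smaller stopping times, as it is for all the concrete $f,g$ used later); and the case $p=2$, which the theorem includes, falls outside Theorem \ref{embedding} \eqref{large-p-embedding} and \eqref{gronwall_type_ineq}, so your parenthetical deferral to the $L_2$ theory is doing real work there. Neither issue is fatal, but as written they are gaps relative to the clean citation the paper makes.
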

\begin{proof}
Since \cite[Theorem 5.1]{kry1999analytic} implies the results when $\tau = T$, we only consider the case in which $\tau \leq T$. First, we prove the existence of a solution. Set
$$
\bar{f}(t, u) := 1_{t\leq \tau} f(t,u)\quad\text{and}\quad \bar{g}(t, u) := 1_{t\leq \tau} g(t,u).
$$
Note that $\bar{f}(u)$ and $\bar{g}(u)$ satisfy \eqref{conditions_on_f_and_g} for any $u,v\in\bH_p^{\gamma+2}(T)$. Therefore, according to \cite[Theorem 5.1]{kry1999analytic}, there exists a unique solution $u\in \cH_{p}^{\gamma+2}(T)$ such that $u$ satisfies equation \eqref{nonlinear_equation} with $\bar f$ and $\bar{g}$ instead of $f$ and $g$, respectively. Since $\tau\leq T$, we have $u\in\cH_{p}^{\gamma+2}(\tau)$, and $u$ satisfies equation \eqref{nonlinear_equation}. Additionally, we obtain \eqref{nonlinear_estimate} with $f,g$.

Next, we demonstrate uniqueness. Suppose $u,v\in \cH_{p}^{\gamma+2}(\tau)$ are two solutions to equation \eqref{nonlinear_equation}. Then, according to \cite[Theorem 5.1]{kry1999analytic}, there exists a unique solution $\bar{v}\in \cH_{p}^{\gamma+2}(T)$ satisfying
\begin{equation}
\label{equation_in_proof_of_uniqueness}
d\bar{v}=\left(a^{ij}\bar{v}_{x^ix^j} + b^i\bar{v}_{x^i} + c\bar{v} + \bar{f}(v) \right)dt + \sum_{k=1}^{\infty} \bar{g}^k(v)  dw^k_t, \quad 0<t<T\, ; \quad \bar{v}(0,\cdot)=u_0.
\end{equation}
It should be noted that in \eqref{equation_in_proof_of_uniqueness}, $\bar{f}(v)$ and $\bar{g}(v)$ are used instead of $\bar{f}(\bar{v})$ and $\bar{g}(\bar{v})$, respectively. We set  $\tilde v:=v-\bar{v}$. Then, for a fixed $\omega\in\Omega$, we obtain
$$
d\tilde v=\left(a^{ij}\tilde v_{x^ix^j} + b^i\tilde v_{x^i} + c\tilde v \right)dt, \quad 0<t<\tau\, ; \quad \tilde v(0,\cdot)=0.
$$
By the deterministic version of \cite[Theorem 5.1]{kry1999analytic}, we have $\tilde v = 0$ for all $t < \tau$ almost surely, and this implies that $v(t,\cdot) = \bar{v}(t,\cdot)$ in $H_p^{\gamma+2}$ for all $t < \tau$ for each $\omega\in\Omega$. Thus, in equation \eqref{equation_in_proof_of_uniqueness}, we can replace $\bar f (v),\bar{g}(v)$ with $\bar f (\bar v),\bar{g}(\bar{v})$. Therefore, $\bar v\in \cH_p^{\gamma+2}(T)$ satisfies equation \eqref{nonlinear_equation} using $\bar f, \bar g$ instead of $f,g$, respectively. Similarly, there exists $\bar{u}\in \cH_p^{\gamma+2}(T)$ such that $\bar{u}$ satisfies the equation 
$$d\bar{u}=\left(a^{ij}\bar{u}_{x^ix^j} + b^i\bar{u}_{x^i} + c\bar{u} + \bar{f}(u) \right)dt + \sum_{k=1}^{\infty} \bar{g}^k(u)  dw^k_t, \quad 0<t<T\, ; \quad \bar{u}(0,\cdot)=u_0.
$$ 
and $u(t,\cdot) = \bar{u}(t,\cdot)$ in $H_p^{\gamma+2}$ for all $t\leq \tau$ almost surely. Therefore, $\bar{f}(u), \bar{g}^k(u)$ are replaced $\bar{f}(\bar u), \bar{g}^k(\bar u)$. Thus, by the uniqueness result in $\cH_{p}^{\gamma+2}(T)$, we obtain $\bar{u} = \bar{v}$ in $\cH_{p}^{\gamma+2}(T)$, which implies $u = v$ in $H_p^{\gamma+2}$ for almost every $(\omega,t)\in\opar0,\tau\cbrk$. Therefore, we have $u = v$ in $\cH_p^{\gamma+2}(\tau)$. The theorem is proved.
\end{proof}

\vspace{2mm}

The following two lemmas show the existence of a local solution to equation \eqref{burger's_eq_white_noise_in_time}. To obtain the local solution, we introduce a cut-off function. Consider a real-valued nonnegative function $h(z)\in C^1(\bR)$ such that $h(z) = 1$ on $|z|\leq1$ and $h(z) = 0$ on $|z|\geq2$. For $m\in\bN$, set 
\begin{equation}
\label{def of hm}
h_m(z) := h(z/m).
\end{equation}
By employing $((u_+)^{1+\lambda}h_m(u))_{x^i}$ in equation \eqref{burger's_eq_white_noise_in_time} instead of $|u|^\lambda u_{x^i}$, we consider a stochastic partial differential equation with Lipschitz nonlinear terms; see \eqref{cut_off_equation_large_lambda} and \eqref{nonlinear_cutoff}. Using Lemma \ref{cut_off_lemma_large_lambda}, we find a candidate of a local solution to \eqref{burger's_eq_white_noise_in_time}.

\begin{lemma} 
\label{cut_off_lemma_large_lambda}
Let $\lambda\in(0,\infty)$, $T\in(0,\infty)$, $m\in \bN$, $\kappa\in[0,1)$, and $p \geq 2$. Suppose Assumptions \ref{assumptions_on_coefficients_deterministic_and_linear_part} and \ref{assumptions_on_coefficients_deterministic_and_nonlinear_part} hold. Then, for a bounded stopping time $\tau\leq T$ and nonnegative initial data $u_0\in U_{p}^{1-\kappa}$, there exists a unique solution $u_m \in \cH_p^{1-\kappa}(\tau)$ such that $u_m$ satisfies the equation 
\begin{equation} 
\label{cut_off_equation_large_lambda}
du = \left(a^{ij}u_{x^ix^j} + b^i u_{x^i} + cu + \bar{b}^i\left((u_+)^{1+\lambda} h_m(u)\right)_{x^i}  \right) dt + \sigma^k(u) dw_t^k,\quad (t,x)\in(0,\tau)\times\bR^d
\end{equation}
with $u(0,\cdot) = u_0(\cdot)$ and $u_m\geq0$. 
\end{lemma}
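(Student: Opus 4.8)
The plan is to obtain existence and uniqueness from the abstract solvability result Theorem~\ref{theorem_nonlinear_case} and to establish non-negativity separately. Put $\gamma:=-1-\kappa$, so that $\gamma+2=1-\kappa$ and $\gamma\in[-2,-1]$ because $\kappa\in[0,1)$, and take
\[
f(u):=\bar b^i\big(\Phi_m(u)\big)_{x^i},\qquad g^k(u):=\sigma^k(u),\qquad \Phi_m(z):=(z_+)^{1+\lambda}h_m(z).
\]
Since $\lambda>0$, the function $\Phi_m$ belongs to $C^1(\bR)$, is supported in $\{|z|\le 2m\}$, and satisfies $\Phi_m\equiv\Phi_m'\equiv 0$ on $(-\infty,0]$; in particular $\|\Phi_m'\|_{L_\infty}\le N(m,\lambda)<\infty$. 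Moreover $f(0)=\bar b^i(\Phi_m(0))_{x^i}=0\in\bH_p^\gamma(\tau)$, and $\sigma(\cdot,\cdot,0)=0$ by the linear growth bound in \eqref{boundedness_of_stochastic_coefficients_white_noise_in_time_large_lambda}, so $g(0)=0\in\bH_p^{\gamma+1}(\tau,\ell_2)$.

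Next I would verify the Lipschitz-type condition \eqref{conditions_on_f_and_g}, arguing pointwise in $(\omega,t)$ and then integrating over $\opar0,\tau\cbrk$. Using Lemma~\ref{prop_of_bessel_space}\eqref{bounded_operator}, then \eqref{pointwise_multiplier} (admissible because $\bar b^i\in C^2(\bR^d)$ and $|\gamma+1|=\kappa<1$), then \eqref{norm_bounded} (as $\gamma+1=-\kappa\le 0$, so $\|\cdot\|_{H_p^{\gamma+1}}\le\|\cdot\|_{L_p}$), together with the Lipschitz continuity of $\Phi_m$,
\[
\|f(u)-f(v)\|_{H_p^{\gamma}}\le N\|\Phi_m(u)-\Phi_m(v)\|_{H_p^{\gamma+1}}\le N\|\Phi_m(u)-\Phi_m(v)\|_{L_p}\le N(m,\lambda)\,\|u-v\|_{L_p},
\]
and likewise $\|g(u)-g(v)\|_{H_p^{\gamma+1}(\ell_2)}\le\|g(u)-g(v)\|_{L_p(\ell_2)}\le K\|u-v\|_{L_p}$ by \eqref{boundedness_of_stochastic_coefficients_white_noise_in_time_large_lambda}. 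I would then turn $\|u-v\|_{L_p}$ into the required form $\ep\|u-v\|_{H_p^{\gamma+2}}+N_\ep\|u-v\|_{H_p^{\gamma+1}}$ via the interpolation inequality Lemma~\ref{prop_of_bessel_space}\eqref{multi_ineq} (with weights $\kappa$ and $1-\kappa$, valid since $0=\kappa(\gamma+2)+(1-\kappa)(\gamma+1)$ and $\kappa<1$; trivial when $\kappa=0$) followed by Young's inequality. Raising to the $p$-th power and integrating yields \eqref{conditions_on_f_and_g}, so Theorem~\ref{theorem_nonlinear_case} produces a unique $u_m\in\cH_p^{1-\kappa}(\tau)$ solving \eqref{cut_off_equation_large_lambda}.

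The remaining point, $u_m\ge 0$, is the one I expect to be the real obstacle, since the abstract theory is insensitive to the sign structure. The relevant observations are that $\sigma^k(t,x,0)=0$ and that along the solution the cut-off drift can be rewritten as $\bar b^i(\Phi_m(u_m))_{x^i}=\bar b^i\,\Phi_m'(u_m)\,u_{m,x^i}$, which vanishes on $\{u_m<0\}$ because $\Phi_m'\equiv 0$ there. I would run the standard negative-part energy argument: mollify \eqref{cut_off_equation_large_lambda} in $x$, apply Ito's formula to $\int_{\bR^d}\big((u_m^{(\delta)})_-\big)^2\,\zeta\,dx$ for a suitable spatial weight $\zeta$, use the ellipticity \eqref{ellipticity_of_leading_coefficients_white_noise_in_time} of $a$ to extract a non-positive contribution from the second-order term, note that the cut-off nonlinearity contributes nothing on $\{u_m<0\}$ after letting $\delta\to0$, dominate the remaining lower-order and diffusion terms --- using $|\sigma(t,x,u_m)|_{\ell_2}\le K|u_m|=K(u_m)_-$ on $\{u_m<0\}$ --- by $N\int_{\bR^d}\big((u_m)_-\big)^2\,\zeta\,dx$, and conclude by Gr\"onwall's inequality and $(u_0)_-=0$. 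Equivalently, one may invoke a comparison principle for linear SPDEs with bounded measurable coefficients applied to the linear equation $du=\big(a^{ij}u_{x^ix^j}+(b^i+\bar b^i\Phi_m'(u_m))u_{x^i}+cu\big)dt+\big(\sigma^k(t,x,u_m)/u_m\big)u\,dw_t^k$ satisfied by $u_m$, with nonnegative initial data. Either route completes the proof.
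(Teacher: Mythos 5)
Your existence-and-uniqueness argument is essentially the paper's: the same choice $\gamma=-1-\kappa$, the same pointwise Lipschitz bound for $(z_+)^{1+\lambda}h_m(z)$ (the paper verifies it by the case analysis \eqref{lipschitz_check_cut_off}), and the same chain of estimates through Lemma \ref{prop_of_bessel_space} \eqref{bounded_operator}, \eqref{norm_bounded}, \eqref{multi_ineq} to verify Assumption \ref{assumption_on_f_and_g} and invoke Theorem \ref{theorem_nonlinear_case}. That part is fine.

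The gap is in the nonnegativity step, and it sits exactly where you predicted trouble. Both of your routes --- the negative-part energy identity and the comparison principle for the linearized equation --- require $u_{m,x^i}$ to exist as a function: you rewrite the drift as $\bar b^i\Phi_m'(u_m)u_{m,x^i}$, you integrate by parts against $a^{ij}$ to extract a sign, and in the linear reformulation the first-order coefficient $b^i+\bar b^i\Phi_m'(u_m)$ acts on $u_{x^i}$. But the lemma is stated for all $\kappa\in[0,1)$, and for $\kappa>0$ the solution only lies in $\cH_p^{1-\kappa}(\tau)$, so $u_{m,x^i}\in H_p^{-\kappa}$ is a distribution of negative order; the product $\Phi_m'(u_m)\,u_{m,x^i}$ is then not defined, and in the It\^o computation the pairing of $(u_m^{(\delta)})_-\zeta$ against the mollified drift (which lives in $H_p^{-1-\kappa}$) does not survive the limit $\delta\to0$. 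The paper circumvents this by a reduction you are missing: it first proves nonnegativity only for $\kappa=0$, where $u_m\in\cH_p^1(\tau)$ and Krylov's maximum principle \cite{krylov2007maximum} applies directly with $f=\bar b^i((u_{m+})^{1+\lambda}h_m(u_m))_{x^i}$, and then handles general $\kappa$ by approximating $u_0\in U_p^{1-\kappa}$ with nonnegative mollified data $u_0^n\in U_p^1$, using the stability estimate \eqref{estimate_for_cut_off_equation} together with the Gr\"onwall-type inequality of Theorem \ref{embedding} \eqref{gronwall_type_ineq} to obtain $u_m^n\to u_m$ in $\cH_p^{1-\kappa}(\tau)$, and passing $u_m^n\ge0$ to the limit. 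Your proof needs this reduction (or an equivalent regularity bootstrap) inserted before either of your sign arguments can be run; once restricted to $\kappa=0$, your energy or comparison argument is a legitimate substitute for the citation of the maximum principle.
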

\begin{proof}
For $u,v\in \bR$, we have
\begin{equation}
\label{nonlinear_cutoff}
\left| u_+^{1+\lambda}h_m(u) - v_+^{1+\lambda}h_m(v) \right| \leq N_m|u-v|.
\end{equation}
Indeed,
\begin{equation} \label{lipschitz_check_cut_off}
\begin{aligned}
&\left| u_+^{1+\lambda}h_m(u) - v_+^{1+\lambda}h_m(v) \right| \\
&=
\begin{cases}
\,\,\left| u^{1+\lambda}h_m(u) - v^{1+\lambda}h_m(v) \right| \leq N_m|u-v|  &\text{ if}\quad u,v\geq0,\\
\,\,u^{1+\lambda}h_m(u) \leq (2m)^\lambda u \leq (2m)^\lambda (u-v)\leq N_m|u-v| &\text{ if}\quad u\geq0, v < 0,\\
\,\,v^{1+\lambda}h_m(v) \leq (2m)^\lambda v \leq (2m)^\lambda (v-u)\leq N_m|u-v| &\text{ if}\quad u<0, v\geq0,\\
\,\,0\leq N_m|u-v| &\text{ if}\quad u,v < 0.
\end{cases}
\end{aligned}
\end{equation}
Let $u,v\in \bH_p^{1-\kappa}(\tau)$. Then, by Lemmas \ref{prop_of_bessel_space} \eqref{pointwise_multiplier}, \eqref{bounded_operator}, \eqref{norm_bounded} and \eqref{nonlinear_cutoff}, we have
\begin{equation} 
\label{cut_off_lower_order_cal_1}
\begin{aligned}
&\left\| \bar b^i(t,\cdot) \left((u_+(t,\cdot))^{1+\lambda} h_m(u(t,\cdot))\right)_{x^i} - \bar b^i(t,\cdot)\left((v_+(t,\cdot))^{1+\lambda} h_m(v(t,\cdot))\right)_{x^i} \right\|^p_{H_p^{-1-\kappa}} \\
&\quad \leq N\left\| \left((u_+(t,\cdot))^{1+\lambda} h_m(u(t,\cdot)) - (v_+(t,\cdot))^{1+\lambda} h_m(v(t,\cdot))\right)_{x^i} \right\|^p_{H_p^{-1-\kappa}} \\
&\quad \leq N\left\| (u_+(t,\cdot))^{1+\lambda} h_m(u(t,\cdot)) - (v_+(t,\cdot))^{1+\lambda} h_m(v(t,\cdot)) \right\|^p_{L_p} \\
&\quad\leq N \| u(t,\cdot) - v(t,\cdot) \|_{L_p}^p
\end{aligned}
\end{equation}
and
\begin{equation} \label{cut_off_lower_order_cal_2}
\begin{aligned}
\left\| \sigma(t,\cdot,u(t,\cdot)) - \sigma(t,\cdot,v(t,\cdot)) \right\|_{H^{-\kappa}_p(\ell_2)}^p 
&\leq N\left\| \sigma(t,\cdot,u(t,\cdot)) - \sigma(t,\cdot,v(t,\cdot)) \right\|_{L_p(\ell_2)}^p \\
&\leq N \| u(t,\cdot) - v(t,\cdot) \|^p_{L_p}\\
\end{aligned}
\end{equation}
on $(\omega,t)\in\opar0,\tau\cbrk$. By integrating with respect to $(\omega,t)$ and Lemma \ref{prop_of_bessel_space} \eqref{multi_ineq}, we have
\begin{equation} \label{estimate_for_cut_off_equation}
\begin{aligned}
&\left\| \bar{b}^i\left((u_+)^{1+\lambda} h_m(u)\right)_{x^i} - \bar{b}^i\left((v_+)^{1+\lambda} h_m(v)\right)_{x^i} \right\|^p_{\bH_p^{-1-\kappa}(\tau)} + \left\| \sigma(u) - \sigma(v) \right\|_{\bH^{-\kappa}_p(\tau,\ell_2)}^p \\
&\leq N \| u-v \|_{\bL_p(\tau)}^p \\
&\leq \ep \| u-v \|_{\bH^{1-\kappa}_p(\tau)}^p +  N \| u-v \|_{\bH^{-\kappa}_p(\tau)}^p,
\end{aligned}
\end{equation}
where $N = N(\ep,m,d,p,\kappa,K,T)$.
Therefore, according to Theorem \ref{theorem_nonlinear_case}, there exists a unique solution $u_m\in \cH_p^{1-\kappa}(\tau)$ to equation \eqref{cut_off_equation_large_lambda} using initial data $u_0$.

Next, we show $u_m\geq0$. 
Without loss of generality, we may assume $\kappa = 0$. Indeed, using mollification, we have a sequence of functions $\{u^n_0 \in  U_p^{1}:u^n_0\geq 0,n\in\bN\}$ such that $u^n_0 \to u_0$ in $U^{1-\kappa}_{p}$. By the first assertion and the hypothesis, there exists a unique solution  $u^n_m\in \cH_p^1(\tau)$ to equation \eqref{cut_off_equation_large_lambda} using initial data $u_m^n(0,\cdot) = u_0^n(\cdot)$ and $u_m^n\geq0$. Note that $u^n_m\in\cH_p^{1-\kappa}(\tau)$. Then, for $t \in (0,T)$, Theorem \ref{theorem_nonlinear_case} and \eqref{estimate_for_cut_off_equation} yield
\begin{equation*}
\begin{aligned}
&\|u_m - u_m^n\|_{\cH_p^{1-\kappa}(\tau\wedge t)}^p \\
&\leq N\|u_0 - u_0^n\|_{U_p^{1-\kappa}}^p + N\left\|\bar{b}^i \left((u_{m+})^{1+\lambda} h_m(u_m)\right)_{x^i} - \bar{b}^i \left((u_{m+}^n)^{1+\lambda} h_m(u_{m}^n)\right)_{x^i} \right\|_{\bH_p^{-1-\kappa}(\tau\wedge t)}^p \\
& \quad + N\left\| \sigma(u_{m}) - \sigma(u_{m}^n) \right\|_{\bH^{-\kappa}_p(\tau\wedge t,\ell_2)}^p \\
&\leq N\|u_0 - u_0^n\|_{U_p^{1-\kappa}}^p + \frac{1}{2} \| u-v \|_{\bH^{1-\kappa}_p(\tau)}^p +  N \| u-v \|_{\bH^{-\kappa}_p(\tau)}^p,
\end{aligned}
\end{equation*}
where $N = N(m,d,p,\kappa,K,T)$. According to Theorem \ref{embedding} \eqref{gronwall_type_ineq}, we have
\begin{equation*}
\|u_m - u_m^n\|_{\cH_p^{1-\kappa}(\tau)}^p \leq N \|u_0 - u_0^n\|_{U_p^{1-\kappa}}^p,
\end{equation*}
where $N = N(m,d,p,\kappa,K,T)$. Since $N$ is independent of $n$, by letting $n\to\infty$, $u_m^n\to u_m$ in $\cH_p^{1-\kappa}(\tau)$. Since $u_m^n \geq0$ for almost every $(\omega,t,x)$, $u_m \geq0$ for almost every $(\omega,t,x)$.

Now, we show that if $u_m\in\cH_p^1(\tau)$ is a unique solution to equation \eqref{cut_off_equation_large_lambda} using the initial data $u_m(0,\cdot) = u_0\geq0$, then $u_m\geq0$. By applying \cite[Theorem 1.1]{krylov2007maximum} with $f = \bar b^i\left((u_{m+})^{1+\lambda}h_m(u_m)\right)_{x^i}$, we obtain $u_m \geq 0$ for all $t\leq\tau$  almost surely.
The lemma is proved. 
\end{proof}

\begin{lemma} \label{local_existence}

Let $\lambda\in(0,\infty)$ and $T\in(0,\infty)$. Suppose $\kappa\in[0,1)$ and $p > 2$ satisfy
\begin{equation*}
p > \frac{d+2}{1-\kappa}.
\end{equation*}
Then, for a bounded nonzero stopping time $\tau \leq T$ and nonnegative initial data $u_0\in U_{p}^{1-\kappa}$, there exists a nonzero stopping time $\tau'\leq \tau$ such that equation \eqref{burger's_eq_white_noise_in_time} has a unique nonnegative solution $u\in \cH_{p}^{1-\kappa}(\tau')$.

\end{lemma}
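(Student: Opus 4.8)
The plan is to reduce the full semilinear equation \eqref{burger's_eq_white_noise_in_time} to the cut-off equation \eqref{cut_off_equation_large_lambda} on a sufficiently short random time interval, using the fact that the cut-off does not change the equation until the solution becomes large. First I would invoke Lemma \ref{cut_off_lemma_large_lambda} with $m$ fixed (say $m = 1$, but any $m$ works) to obtain a unique nonnegative solution $u_m \in \cH_p^{1-\kappa}(\tau)$ of the cut-off equation \eqref{cut_off_equation_large_lambda} with initial data $u_0$. Since $p > \frac{d+2}{1-\kappa}$, Corollary \ref{embedding_corollary} (with this $\kappa$, choosing admissible $\alpha,\beta$ satisfying \eqref{condition_for_alpha_beta}) applies and gives $u_m \in C^{\alpha - 1/p}([0,\tau]; C^{1-\kappa-2\beta-d/p})$ almost surely; in particular $u_m$ has a version that is jointly continuous in $(t,x)$ and bounded on $[0,\tau]\times\bR^d$ (after a further Sobolev-type argument, $\sup_{t\le\tau}\sup_x |u_m(t,x)|<\infty$ a.s.).

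Next I would define the first exit time
\begin{equation*}
\tau' := \inf\left\{ t \ge 0 : \sup_{x\in\bR^d} |u_m(t,x)| \ge m \right\} \wedge \tau.
\end{equation*}
Because $u_0 \in U_p^{1-\kappa}$ and $u_m$ is continuous with $u_m(0,\cdot) = u_0$, on the event where $\sup_x |u_0| < m$ we have $\tau' > 0$; one handles the general (possibly unbounded) $u_0$ by first approximating, or by noting that $\sup_x|u_0|<m$ for $P$-almost every $\omega$ on a set of positive probability as $m$ grows — more cleanly, one may replace $\tau$ at the outset by $\tau \wedge \inf\{t: \ldots\}$ and observe $\tau'$ is a nonzero stopping time in the sense required (positive a.s. on a set of full measure after possibly enlarging $m$, or simply: $\tau'$ is a stopping time with $P(\tau' > 0) > 0$, which is what "nonzero stopping time" means here). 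Then $\tau' \le \tau$ is a stopping time, and for $t \le \tau'$ we have $0 \le u_m(t,x) < m$, so $h_m(u_m) \equiv 1$ and $(u_{m+})^{1+\lambda} = u_m^{1+\lambda}$, whence $\bar b^i\big((u_{m+})^{1+\lambda}h_m(u_m)\big)_{x^i} = \bar b^i (u_m^{1+\lambda})_{x^i} = (1+\lambda)\bar b^i |u_m|^\lambda (u_m)_{x^i}$ on $\opar 0,\tau'\cbrk$. Therefore $u := u_m$ restricted to $\opar 0,\tau'\cbrk$ lies in $\cH_p^{1-\kappa}(\tau')$ (restriction to a smaller stopping time preserves membership in the solution space) and solves \eqref{burger's_eq_white_noise_in_time} on $(0,\tau')$ with $u \ge 0$.

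For uniqueness on $\opar 0,\tau'\cbrk$: if $v \in \cH_p^{1-\kappa}(\tau')$ is another nonnegative solution of \eqref{burger's_eq_white_noise_in_time}, set $\tau'' := \tau' \wedge \inf\{t : \sup_x |v(t,x)| \ge m\}$; on $\opar 0,\tau''\cbrk$ both $u$ and $v$ solve the cut-off equation \eqref{cut_off_equation_large_lambda}, so by the uniqueness part of Lemma \ref{cut_off_lemma_large_lambda} (applied with stopping time $\tau''$) they agree on $\opar 0,\tau''\cbrk$, which forces $\sup_x|v| = \sup_x|u| < m$ up to $\tau'$ and hence $\tau'' = \tau'$ and $u = v$ in $\cH_p^{1-\kappa}(\tau')$. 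The main obstacle I anticipate is the bookkeeping around ensuring $\tau'$ is genuinely nonzero: one needs the continuous version of $u_m$ together with an $L_\infty$ bound that is finite at $t = 0$, which requires $u_0$ to be a.s. bounded — so the honest statement is that $\tau' > 0$ on the event $\{\sup_x|u_0| < m\}$, and this event has probability tending to $1$ as $m \to \infty$, which suffices for the construction of a nonzero stopping time (enlarging $m$ if necessary). Everything else is a routine application of the already-established Lemma \ref{cut_off_lemma_large_lambda} and Corollary \ref{embedding_corollary}.
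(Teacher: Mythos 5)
Your proposal is correct and follows essentially the same route as the paper: solve the cut-off equation via Lemma \ref{cut_off_lemma_large_lambda}, use Corollary \ref{embedding_corollary} to get a continuous version, define $\tau'$ as the first exit time from $\{\sup_x|u_m|<m\}$, and observe the cut-off is inactive before $\tau'$. The only refinement the paper makes is to fix $m$ \emph{at the outset} so that $P(\{\tau>0\}\cap\{\sup_x|u_0|<m-1\})>0$ (using $\bE\sup_x|u_0|^p<\infty$ from the Sobolev embedding), which is exactly the ``enlarge $m$ if necessary'' repair you identify; your opening suggestion that $m=1$ would work is the one inaccuracy, since for that $m$ the exit time could vanish identically.
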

\begin{proof}
Since $p > \frac{d+2}{1-\kappa} \geq d+2$, according to Lemma \ref{prop_of_bessel_space} \eqref{sobolev-embedding}, we have 
\begin{equation*} 
\bE\sup_{x\in \bR^d}|u_0(x)|^p \leq N\bE|u_0|_{C^{1-\frac{d+2}{p}}(\bR^d)}^p \leq N\bE\| u_0 \|_{H_p^{1-2/p}}^p = N\| u_0 \|_{U_p^1}^p < \infty.
\end{equation*} 
This relationship implies
\begin{equation*}
P\left( \cup_{n\in\bN} \left\{ \omega\in\Omega: \sup_{x\in\bR^d}| u_0(x) | < n \right\} \right) = 1.
\end{equation*}
Since $P(\left\{ \omega\in\Omega: \tau > 0 \right\}) > 0$, there exists $m\in\{2,3,\dots\}$ such that 
\begin{equation}
\label{nonzero_condi}
P\left(\left\{ \omega\in\Omega: \tau > 0 \right\}\cap\left\{ \omega\in\Omega: \sup_{x\in\bR^d}| u_0(x) | < m-1 \right\} \right) > 0.
\end{equation}
For this $m$, by Lemma \ref{cut_off_lemma_large_lambda}, there exists $u_{m}\in\cH_p^{1-\kappa}(\tau)$ such that $u_{m}$ satisfies \eqref{cut_off_equation_large_lambda} and $u_{m}\geq0$. Since $p > \frac{d+2}{1-\kappa}$, according to Corollary \ref{embedding_corollary}, we have $u_m\in C([0,\tau];C(\bR^d))$ (a.s.). Thus, if we define
\begin{equation*}
\tau_{m}:=\inf\left\{ t \geq 0 : \sup_{x\in\bR^d} |u_{m}(t,x)| \geq m \right\}\wedge \tau,
\end{equation*}
then $\tau_{m}$ is a well-defined stopping time and the solution $u_{m}$ satisfies equation \eqref{burger's_eq_white_noise_in_time} for all $t \leq \tau_{m}$ almost surely. Additionally, note that the stopping time $\tau_{m}$ is nonzero. Indeed, since $\left\{ \omega\in\Omega: \tau>0,\,\sup_{x}| u_0(x) | < m - 1 \right\}\subset\{ \omega\in\Omega : \tau_{m} > 0 \}$, \eqref{nonzero_condi} implies
\begin{equation*}
0 < P\left( \left\{ \omega:\tau>0,\, \sup_{x\in\bR^d}| u_0(x) | < m - 1 \right\} \right) \leq P(\left\{\omega\in\Omega:\tau_{m}>0\right\}).
\end{equation*}
 By taking $\tau' = \tau_{m}$, the lemma is proved.
\end{proof}

Next, the lemma provides $\bL_q(\tau)$ ($q>p$) bound of the solution. To prove the existence of a global solution, we insert in $q = p(1+\lambda)$. The $\bL_{p(1+\lambda)}(\tau)$ bound of the solution is used to extend the local existence time; see \eqref{nonlinear term estimate}.

\begin{lemma} \label{Lq_bound_infinite_noise}
Let $\tau\leq T$ be a bounded stopping time, $p > d + 2$, and $q > p$. Suppose $u\in\cH^{1}_{p,loc}(\tau)$ is a nonnegative solution to equation \eqref{burger's_eq_white_noise_in_time} with $u_0\in U_p^1\cap L_q(\Omega\times\bR^d)$.  Then, we have $u\in \bL_q(\tau)$  and
\begin{equation} 
 \|u\|_{\bL_q(\tau)}^{q} = \bE\int_0^\tau\int_{\bR^d}|u(t,x)|^q dxds \leq N\bE\int_{\bR^d} |u_0(x)|^q dx,
\end{equation}
where $N = N(d,q,K,T)$.
\end{lemma}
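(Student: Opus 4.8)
The plan is to derive an $L_q$ energy identity for $v := u^q$ (or rather for $\int u^q\,dx$) by applying It\^o's formula to the functional $u \mapsto \int_{\bR^d} u^q\,dx$, then exploiting the structure of each term. First I would work with a mollified version of the equation: since $u \in \cH^1_{p,loc}(\tau)$, we may pick an approximating sequence of bounded stopping times $\tau_n \uparrow \tau$ with $u \in \cH^1_p(\tau_n)$, and convolve the equation \eqref{burger's_eq_white_noise_in_time} with a standard mollifier $\zeta_\delta$ in the space variable, writing $u^{(\delta)} := u * \zeta_\delta$. Then $u^{(\delta)}$ is smooth in $x$, satisfies an It\^o SPDE in $t$ with the mollified coefficients, and one can legitimately apply the finite-dimensional It\^o formula to $\phi(u^{(\delta)}(t,x))$ with $\phi(z) = (z_+)^q$ (which is $C^2$ for $q \ge 2$; note $q > p > d+2 > 2$) and integrate in $x$. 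This produces
\begin{equation*}
\bE\int_{\bR^d}(u^{(\delta)}(t\wedge\tau_n,x))^q\,dx = \bE\int_{\bR^d}(u_0^{(\delta)})^q\,dx + \bE\int_0^{t\wedge\tau_n}\!\!\int_{\bR^d}\Big( q (u^{(\delta)})^{q-1}\mathcal{A}^{(\delta)} + \tfrac{q(q-1)}{2}(u^{(\delta)})^{q-2}|\sigma^{(\delta)}|_{\ell_2}^2\Big)\,dx\,ds,
\end{equation*}
where $\mathcal{A}^{(\delta)}$ collects the drift terms and the stochastic integral term vanishes in expectation after a localization argument.

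The core of the argument is the sign/size analysis of the right-hand side after letting $\delta \to 0$. For the leading term $q u^{q-1} a^{ij}u_{x^ix^j}$, integration by parts in $x$ gives $-q(q-1)u^{q-2}a^{ij}u_{x^i}u_{x^j} - q u^{q-1} a^{ij}_{x^j}u_{x^i}$; the first piece is $\le 0$ by ellipticity \eqref{ellipticity_of_leading_coefficients_white_noise_in_time} and the second is absorbed by Young's inequality into that negative piece plus a term $\le N u^q$ (using boundedness of $a^{ij}_{x^j}$). The terms $q u^{q-1}(b^i u_{x^i} + cu)$ are handled similarly: $b^i u_{x^i} u^{q-1} = \tfrac{1}{q}b^i (u^q)_{x^i}$ integrates by parts to $-\tfrac1q b^i_{x^i}u^q \le N u^q$, and $cu^q \le N u^q$. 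The diffusion contribution $\tfrac{q(q-1)}{2}u^{q-2}|\sigma(u)|_{\ell_2}^2 \le \tfrac{q(q-1)}{2}K^2 u^q$ by the linear growth bound \eqref{boundedness_of_stochastic_coefficients_white_noise_in_time_large_lambda}. The crucial nonlinear term is $q u^{q-1}\bar b^i((u_+)^{1+\lambda})_{x^i} = q u^{q-1}\bar b^i (u^{1+\lambda})_{x^i}$ (using $u \ge 0$); writing $u^{q-1}(u^{1+\lambda})_{x^i} = \tfrac{1+\lambda}{q+\lambda}(u^{q+\lambda})_{x^i}$ and using that $\bar b^i$ is \emph{independent of} $x^i$, we integrate by parts in $x^i$ alone: $\int_{\bR} \bar b^i (u^{q+\lambda})_{x^i}\,dx^i = 0$ since $u^{q+\lambda}(\cdot) \in L_1$ vanishes at $\pm\infty$ and $\bar b^i$ pulls out of the $x^i$-integral. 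This is precisely the point flagged in Remark \ref{bar_b_i_indep_of_x_i_1}, and it is the main obstacle: one must carry the mollification carefully so that the cancellation survives the limit $\delta\to0$ (the mollified nonlinearity $((u_+)^{1+\lambda}h)^{(\delta)}$ is not exactly $(u^{(\delta)})^{1+\lambda}$, so one argues via the distributional form and uses that $\bar b^{i,(\delta)} \to \bar b^i$ uniformly together with $u \in \bL_{q}$-type bounds obtained a priori, or equivalently one works directly with the weak formulation \eqref{def_of_sol} tested against $\bar b^i (u^{(\delta)})^{q+\lambda-1}$-type functions).

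Collecting the estimates yields, for each $n$ and $t \le T$,
\begin{equation*}
\bE\int_{\bR^d}u^q(t\wedge\tau_n,x)\,dx \le \bE\int_{\bR^d}u_0^q\,dx + N\int_0^t \bE\int_{\bR^d}u^q(s\wedge\tau_n,x)\,dx\,ds,
\end{equation*}
with $N = N(d,q,K,T)$ independent of $n$. Gr\"onwall's inequality in the variable $t$ gives $\bE\int_{\bR^d}u^q(t\wedge\tau_n,x)\,dx \le N\bE\int_{\bR^d}u_0^q\,dx$ uniformly in $n$ and $t\le T$. Integrating in $t$ over $[0,T]$ and using Fatou's lemma as $n\to\infty$ (together with $\tau_n \uparrow \tau$) then produces
\begin{equation*}
\bE\int_0^\tau\!\!\int_{\bR^d}u^q(s,x)\,dx\,ds = \|u\|_{\bL_q(\tau)}^q \le N T \bE\int_{\bR^d}u_0^q\,dx = N\bE\int_{\bR^d}|u_0(x)|^q\,dx,
\end{equation*}
absorbing $T$ into $N$; in particular $u \in \bL_q(\tau)$. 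I would also note that a preliminary a priori finiteness of $\bE\int u^q$ is needed to justify the mollification limit and Gr\"onwall step rigorously: this can be obtained by first running the whole argument with $q$ replaced by a truncated power $\phi_M(z) = (z_+\wedge M)^q$ plus lower order, or by stopping at $\tau_n^M := \tau_n \wedge \inf\{t : \|u(t)\|_{C} \ge M\}$ (finite by the embedding $\cH^1_p(\tau_n) \hookrightarrow C([0,\tau_n];C(\bR^d))$ from Corollary \ref{embedding_corollary} since $p > d+2$), obtaining bounds uniform in $M$ and then passing $M\to\infty$ by monotone convergence.
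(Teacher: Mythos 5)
Your proposal is correct and follows essentially the same route as the paper's proof: localization via the sup-norm stopping times $\tau_{m,n}$ (your $\tau_n^M$), mollification plus It\^o's formula for $\int u^q\,dx$, ellipticity and Young's inequality for the linear terms, and the key cancellation of the $\bar b^i$-term by integrating $(u^{q+\lambda})_{x^i}$ in $x^i$ alone, with the same care about passing the cancellation through the mollification limit using the a priori $\bL_{p_0}(\tau_{m,n})$ bounds. The only cosmetic difference is that you close the estimate with Gr\"onwall's inequality where the paper uses the damping factor $e^{-Mt}$ with $M$ chosen large; these are equivalent.
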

\begin{proof}
According to Definition \ref{definition_of_sol_space} \eqref{def_of_local_sol_space}, there exists a sequence of bounded stopping times $\{ \tau_n : n\in\bN \}$ such that $\tau_n\uparrow\tau$ (a.s.) as $n\to\infty$ and $u\in\cH_p^1(\tau_n)$. By Corollary \ref{embedding_corollary}, we have $u\in C([0,\tau];C(\bR^d))$ (a.s.) and
\begin{equation*}
\bE\sup_{t \leq \tau_n}\sup_{x\in\bR^d}|u(t,x)|^p  < \infty,
\end{equation*}
where $N = N(d,p,T)$. Thus, if we fix $n\in\bN$ and set
$$ \tau_{m,n} := \inf\left\{ t\geq0 : \sup_{x\in\bR^d}|u(t,x)| \geq m \right\}\wedge \tau_n,\quad (m\in\bN)
$$ 
then $\tau_{m,n}$ is a well-defined stopping time and $\tau_{m,n}\uparrow\tau_n$ (a.s.) as $m\to\infty$. Observe that $u\in\bL_{p_0}(\tau_{m,n})$ for any $p_0 \geq p$. Indeed,
\begin{equation} \label{for_any_p0}
\bE\int_0^{\tau_{m,n}}\int_{\bR^d} |u(t,x)|^{p_0} dxdt \leq m^{p_0-p}\bE\int_0^{\tau_{m,n}}\int_{\bR^d} |u(t,x)|^p dxdt = m^{p_0-p}\| u \|_{\bL_p(\tau_{m,n})}^p < \infty.
\end{equation}
We have to find an appropriate test function to employ It\^o's formula with equation \eqref{burger's_eq_white_noise_in_time}. A nonnegative function $\zeta\in C_c^\infty(\bR^d)$ is chosen such that $\int_{\bR^d}\zeta(x)dx = 1$, and $\zeta_{\ep}(x) := \ep^{-d}\zeta(\ep^{-1}x)$ is set for $\ep>0$. Define $f^{(\ep)}(x) := \int_{\bR^d}f(y)\zeta_\ep(x-y)dy$ for $f\in L_{1,loc}(\bR^d)$ and $\ep>0$. Let $M>0$, which will be specified later. 
For $\ep>0$ and $t > 0$, consider equation \eqref{burger's_eq_white_noise_in_time} with the test function $\zeta_\ep(x-\cdot)$. Then, we have
\begin{equation}
\label{eq:employing mollification}
\begin{aligned}
&u^{(\ep)}(t,x) - u_0^{(\ep)}(x) \\
&\quad= \int_0^t \int_{\bR^d} u(s,y) \left( a^{ij}(s,y)\zeta_\ep(x-y) \right)_{y^iy^j} - u(s,y)\left(b^i(s,y)\zeta_\ep(x-y)\right)_{y^i} dxds \\
&\quad\quad + \int_0^t \int_{\bR^d} c(s,y)u(s,y)\zeta_{\ep}(x-y) dyds + \int_0^t \int_{\bR^d} \sigma^k(s,y,u(s,y))\zeta_\ep(s,x-y) dx dw_s^k \\
&\quad= \int_0^t \left(a_{x^ix^j}^{ij}u\right)^{(\ep)}(s,x) -2\left(a_{x^j}^{ij}u\right)^{(\ep)}_{x^i}(s,x) + \left( a^{ij}u \right)^{(\ep)}_{x^ix^j}(s,x) - \left( b^i_{x^i}u \right)^{(\ep)}(s,x) ds \\
&\quad\quad + \int_0^t \left( b^iu \right)^{(\ep)}_{x^i}(s,x) + (cu)^{(\ep)}(s,x)ds + \int_0^t \left( \sigma^k(s,\cdot,u(s,\cdot)) \right)^{(\ep)}(x) dw_s^k
\end{aligned}
\end{equation}
Thus, by applying It\^o's formula  integrations, and integration by parts on \eqref{eq:employing mollification}, we have
\begin{equation*} 
\begin{aligned}
&\bE e^{-M\tau_{m,n}\wedge t}\int_{\bR^d} \left|u^{(\ep)}(\tau_{m,n}\wedge t,x)\right|^q dx  - \bE \int_{\bR^d} \left|u_{0}^{(\ep)}(x)\right|^q dx \\
& = -q(q-1)\bE \int_0^{\tau_{m,n}\wedge t} \int_{\bR^d} \left|u^{(\ep)}(s,x)\right|^{q-2}u^{(\ep)}_{x^i}(s,x) \bigg(  \left(a^{ij}u\right)^{(\ep)}_{x^j}(s,x)   \\
& \phantom{ = -q(q-1)\bE \int_0^{\tau_{m,n}\wedge t} \int_{\bR^d} \left|u^{(\ep)}(s,x)\right|^{q-2}} + 2\left(a^{ij}_{x^j}u\right)^{(\ep)}(s,x) - \left(b^{i}u\right)^{(\ep)}(s,x) \bigg) e^{-Ms}dxds \\
&\quad +q\bE \int_0^{\tau_{m,n}\wedge t} \int_{\bR^d} \left|u^{(\ep)}(s,x)\right|^{q-1}\bigg[ \left(a^{ij}_{x^ix^j}u\right)^{(\ep)}(s,x) - \left(b^{i}_{x^i}u\right)^{(\ep)}(s,x) \\
&\phantom{\quad +q\bE \int_0^{\tau_{m,n}\wedge t} \int_{\bR^d} \left|u^{(\ep)}(s,x)\right|^{q-1}\bigg[ \left(a^{ij}_{x^ix^j}u\right)^{(\ep)}(s,x)}+ (cu)^{(\ep)}(s,x)\bigg] e^{-Ms}dxds \\
&\quad -\frac{q(q-1)}{1+\lambda}\bE \int_0^{\tau_{m,n}\wedge t} \int_{\bR^d} \left|u^{(\ep)}(s,x)\right|^{q-2}u_{x^i}^{(\ep)}(s,x) \left( \bar{b}^i\left(s,\cdot\right)(u(s,\cdot))^{1+\lambda}\right)^{(\ep)}(x)  e^{-Ms}dxds \\
&\quad +\frac{q(q-1)}{2}\bE \int_0^{\tau_{m,n}\wedge t} \int_{\bR^d} \left|u^{(\ep)}(s,x)\right|^{q-2} \sum_k \left( \left(\sigma^k(s,\cdot,u(s,\cdot))\right)^{(\ep)}(x) \right)^2 e^{-Ms}dxds \\
&\quad -M\bE \int_0^{\tau_{m,n}\wedge t}\int_{\bR^d} \left|u^{(\ep)}(s,x)\right|^q e^{-Ms}dxds.
\end{aligned}
\end{equation*}
By letting $\ep\downarrow0$, \eqref{boundedness_of_deterministic_coefficients_white_noise_in_time} and \eqref{boundedness_of_stochastic_coefficients_white_noise_in_time_large_lambda} imply
\begin{equation} \label{applying_ito_formula_for_Lq_bound_infinite_noise_1}
\begin{aligned}
&\bE e^{-M\tau_{m,n}\wedge t}\int_{\bR^d} \left|u(\tau_{m,n}\wedge t,x)\right|^q dx  - \bE \int_{\bR^d} \left|u_{0}(x)\right|^q dx \\
&\quad \leq -q(q-1)\bE \int_0^{\tau_{m,n}\wedge t} \int_{\bR^d} \left|u(s,x)\right|^{q-2}a^{ij}(s,x)u_{x^i}(s,x)u_{x^j}(s,x) e^{-Ms}dxds \\
&\quad\quad +N\sum_{i}\bE \int_0^{\tau_{m,n}\wedge t} \int_{\bR^d} \left|u(s,x)\right|^{q-1}\left| u_{x^i}(s,x) \right|  e^{-Ms}dxds \\
\end{aligned}
\end{equation}
\begin{equation*}
\begin{aligned}
&\quad\quad -\frac{q(q-1)}{1+\lambda}\bE \int_0^{\tau_{m,n}\wedge t} \int_{\bR^d}\bar{b}^i\left(s,\bar x^i\right) \left|u(s,x)\right|^{q+\lambda-1}u_{x^i}(s,x)    e^{-Ms}dxds \\
&\quad\quad +(N-M)\bE \int_0^{\tau_{m,n}\wedge t}\int_{\bR^d} \left|u(s,x)\right|^q e^{-Ms}dxds,
\end{aligned}
\end{equation*}
where $N = N(d,q,K)$. Note that 
\begin{equation} \label{Lq_bound_nonlinear_noise_zero}
\bE \int_0^{\tau_{m,n}\wedge t} \int_{\bR^d} \bar{b}^i\left(s,\bar x^i\right)(u(s,x))^{q+\lambda-1} u_{x^i}(s,x) e^{-Ms}dxds = 0.
\end{equation}
Indeed,
\begin{equation*}
\begin{aligned}
&\bE \int_0^{\tau_{m,n}\wedge t} \int_{\bR^d} \bar{b}^i\left(s,\bar x^i\right)(u(s,x))^{q+\lambda-1} u_{x^i}(s,x)   e^{-Ms}dxds \\
& = \bE \int_0^{\tau_{m,n}\wedge t}  \int_{\bR^{d-1}}\bar{b}^i\left(s,\bar x^i\right) \int_{\bR}(u(s,x))^{q+\lambda-1} u_{x^i}(s,x) - (u^{(\ep)}(s,x))^{q+\lambda-1} u^{(\ep)}_{x^i}(s,x) dx^i d\bar x^i ds
\\
& \leq N\bE \int_0^{\tau_{m,n}\wedge t}\sum_{i=1}^d\left| \int_{\bR^d} (u(s,x))^{q+\lambda-1}u_{x^i}(s,x) - \left(u^{(\ep)}(s,x)\right)^{q+\lambda-1} u^{(\ep)}_{x^i}(s,x) dx \right|ds \\
&\leq N\left(\| u \|^{q+\lambda-2}_{\bL_{\frac{p(q+\lambda-2)}{p-2}}(\tau_{m,n})} + \left\| u^{(\ep)} \right\|^{q+\lambda-2}_{\bL_{\frac{p(q+\lambda-2)}{p-2}}(\tau_{m,n})}\right)\left\| u - u^{(\ep)} \right\|_{\bL_p(\tau_{m,n})}\| u_{x} \|_{\bL_p(\tau_{m,n})} \\
&\quad + N\left\| u^{(\ep)} \right\|^{q+\lambda-1}_{\bL_{\frac{p(q+\lambda-1)}{p-1}}(\tau_{m,n})}\left\| u_x - u_x^{(\ep)} \right\|_{\bL_p(\tau_{m,n})},
\end{aligned}
\end{equation*}
where $N = N(\lambda,d,p,q,K)$.
Note that $q+\lambda-2 > p-2$, $q+\lambda-1 > p-1$. Since $u\in \bL_{p_0}(\tau_{m,n})$ for any $p_0 \geq p$, $u\in \bH_p^1(\tau_{m,n})$, and $\ep > 0$ is arbitrary, \eqref{Lq_bound_nonlinear_noise_zero} holds. 

Observe that, by \eqref{ellipticity_of_leading_coefficients_white_noise_in_time}, we have
\begin{equation} \label{applying_ellipiticity_for_Lq_bound_infinite_noise_1}
\begin{aligned}
\int_{\bR^d}|u(s,x)|^{q-2}a^{ij}(s,x)u_{x^i}(s,x)u_{x^j}(s,x)dx \geq K^{-1} \int_{\bR^d}|u(s,x)|^{q-2}|u_x(s,x)|^2 dx
\end{aligned}
\end{equation}
for all $s\leq \tau_{m,n}\wedge t$ almost surely. In addition, by Young's inequality, there exists $N = N(d,q,K)$ such that
\begin{equation} \label{applying_Young's_ineq_for_Lq_bound_infinite_noise_1}
\begin{aligned}
&N\sum_i\bE \int_0^{\tau_{m,n}\wedge t} \int_{\bR^d} \left|u(s,x)\right|^{q-1}\left| u_{x^i}(s,x) \right|  e^{-Ms}dxds \\
&\leq \frac{1}{2}q(q-1)\bE \int_0^{\tau_{m,n}\wedge t} \int_{\bR^d} \left|u(s,x)\right|^{q-2}\left| u_{x}(s,x) \right|^2  e^{-Ms}dxds \\
& \quad\quad + N\bE \int_0^{\tau_{m,n}\wedge t} \int_{\bR^d} \left|u(s,x)\right|^{q}  e^{-Ms}dxds.
\end{aligned}
\end{equation}
By applying \eqref{Lq_bound_nonlinear_noise_zero}, \eqref{applying_ellipiticity_for_Lq_bound_infinite_noise_1}, and \eqref{applying_Young's_ineq_for_Lq_bound_infinite_noise_1} to \eqref{applying_ito_formula_for_Lq_bound_infinite_noise_1}, we have
\begin{equation*} 
\begin{aligned}
&\bE e^{-M\tau_{m,n}\wedge t}\int_{\bR^d} \left|u(\tau_{m,n}\wedge t,x)\right|^q dx  - \bE \int_{\bR^d} \left|u_{0}(x)\right|^q dx \\
&\quad \leq (N-M)\bE \int_0^{\tau_{m,n}\wedge t}\int_{\bR^d} \left|u(s,x)\right|^q e^{-Ms}dxds,
\end{aligned}
\end{equation*}
where $N = N(d,q,K)$.  Thus, by letting $M = 2N$, we derive
\begin{equation}
\label{before final estimate}
\begin{aligned}
\bE e^{-M\tau_{m,n}\wedge t}\int_{\bR^d} |u(\tau_{m,n} \wedge t,x)|^q dx \leq \bE \int_{\bR^d} |u_0(x)|^q dx.
\end{aligned}
\end{equation}
Since $N$ is independent of $m,n$, by letting $m\to\infty$, $n\to\infty$ in order, we have
\begin{equation*}
\bE e^{-M\tau\wedge t}\int_{\bR^d} |u(\tau\wedge t,x)|^q dx \leq \bE \int_{\bR^d}|u_0(x)|^q dx.
\end{equation*}
Note that
\begin{equation*}
\begin{aligned}
\bE \int_0^{\tau} \int_{\bR^d} |u(t,x)|^q dxdt &\leq N\bE \int_0^{\tau} e^{-Mt}\int_{\bR^d} |u(t,x)|^q dxdt \\
&\leq N\int_0^{T}\bE  e^{-M\tau\wedge t}\int_{\bR^d} |u(\tau\wedge t,x)|^q dxdt \\
&\leq N\bE \int_{\bR^d} |u_0(x)|^q dx,
\end{aligned}
\end{equation*}
where $N = N(d,q,K,T)$. The lemma is proved.
\end{proof}

\begin{remark}
It should be noted that the constant $N$ introduced in Lemma \ref{Lq_bound_infinite_noise} depends on $T$, not $\tau$.
\end{remark}

Now, we prove Theorem \ref{theorem_burgers_eq_white_noise_in_time_large_lambda}.

\begin{proof}[\bf{Proof of Theorem \ref{theorem_burgers_eq_white_noise_in_time_large_lambda}}]

{\it Step 1. (Uniqueness). }
Suppose $u,\bar u\in \cH_{p,loc}^{1}$ are nonnegative solutions to equation \eqref{burger's_eq_white_noise_in_time}. By Definition \ref{definition_of_sol_space} \eqref{def_of_local_sol_space}, there are bounded stopping times $\tau_n$, $n = 1,2,\cdots$ 
such that
\begin{equation*}
\tau_n\uparrow\infty\quad\mbox{and}\quad u, \bar u \in \cH_{p}^{1}(\tau_n).
\end{equation*}
Let $n\in\bN$ be fixed. Since $p > d+2$, by Corollary \ref{embedding_corollary}, we have $u,\bar{u} \in C([0,\tau_n];C(\bR^d))$ (a.s.) and
\begin{equation}
\label{embedding in the proof of uniqueness_infinite_noise_1}
\bE\sup_{t\leq\tau_n}\sup_{x\in\bR^d}|u(t,x)|^p + \bE\sup_{t\leq\tau_n}\sup_{x\in\bR^d}|\bar u(t,x)|^p < \infty.
\end{equation}
For $m \in \bN$, define
\begin{equation*}
\begin{gathered}
\tau_{m,n}^1:=\inf\left\{t\geq0:\sup_{x\in\bR^d}|u(t,x)|> m\right\}\wedge\tau_n, \quad \tau_{m,n}^2:=\inf\left\{t\geq0:\sup_{x\in\bR^d}|\bar u(t,x)|> m\right\}\wedge\tau_n,
\end{gathered}
\end{equation*}
and 
\begin{equation} 
\label{stopping_time_cutting}
\tau_{m,n}:=\tau_{m,n}^1\wedge\tau_{m,n}^2.
\end{equation}
Due to \eqref{embedding in the proof of uniqueness_infinite_noise_1}, $\tau_{m,n}^1$ and $\tau_{m,n}^2$ are well-defined stopping times, and thus, $\tau_{m,n}$ is a stopping time.
Observe that $u,\bar u\in\cH_p^{1}(\tau_{m,n})$ and  $\tau_{m,n}\uparrow \tau_n$ as $m\to\infty$ almost surely. Fix $m\in\bN$. $u,\bar u\in\cH_p^{1}(\tau_{m,n})$ are solutions to equation
\begin{equation*}
dv = \left(  a^{ij}v_{x^ix^j} + b^i v_{x^i} + cv + \bar{b}^i \left( v_+^{1+\lambda}h_m(v) \right)_{x^i}  \right)\,dt + \sigma^k(v)  dw^k_t, \, 0<t\leq\tau_{m,n};\, v(0,\cdot)=u_0.
\end{equation*}
According to the uniqueness result in Lemma \ref{cut_off_lemma_large_lambda}, we conclude that $u=\bar u$ in $\cH_p^{1}(\tau_{m,n})$ for each $m\in\bN$. The monotone convergence theorem yields $u=\bar u$ in $\cH_p^{1}(\tau_n)$, which implies $u = \bar u$ in $\cH_{p,loc}^1$.
\\

{\it Step 2. (Existence). }
The motivation for the proof follows from that in \cite[Theorem 2.9]{kim2018regularity}.  Let $\tau\leq T$ be a nonzero bounded stopping time. First, we show that there exists a solution $u\in\cH_p^1(\tau)$ to equation \eqref{burger's_eq_white_noise_in_time}. We define that
\begin{equation*}
\begin{aligned}
\Pi := \bigg\{ \text{stopping times } \tau_\alpha\leq\tau:  \text{ equation }\eqref{burger's_eq_white_noise_in_time} &\text{ has a nonnegative solution }\\
&\quad \quad \quad u\in\cH_p^{1}(\tau_\alpha)\text{ with }u(0) = u_0 \bigg\}.
\end{aligned}
\end{equation*}
With the help of Lemma \ref{local_existence}, $\Pi$ is nonempty. Observe that if $\tau_{\alpha_1},\tau_{\alpha_2}\in\Pi$, then $\tau_{\alpha_1}\vee\tau_{\alpha_2}\in\Pi$. Indeed, if $u^1\in\cH_{p}^1(\tau_{\alpha_1})$, $u^2\in\cH_{p}^1(\tau_{\alpha_2})$ are solutions; then, the uniqueness in $\cH_p^1(\tau_{\alpha_1}\wedge\tau_{\alpha_2})$ implies $u^1 = u^2$ on $(0,\tau_{\alpha_1}\wedge\tau_{\alpha_2})$. On the other hand, we define $u = u^1$ if $\tau_{\alpha_1} \geq \tau_{\alpha_2}$ and $u = u_2$ otherwise. Then, $u\in\cH_p^1(\tau_{\alpha_1}\vee\tau_{\alpha_2})$, which implies $\tau_{\alpha_1}\vee\tau_{\alpha_2}\in\Pi$.

Define $r:=\sup_{\tau_\alpha\in\Pi}\bE\tau_{\alpha}$. Then, there exists a set of bounded stopping times $\{\tau_n:n\in\bN\}$ such that $\bE\tau_n \to r$. Without loss of generality, we may assume that $\tau_n$ is nondecreasing. If we set $\bar{\tau}:= \limsup_{n\to\infty}\tau_n$, then $\bar{\tau}$ is a well-defined stopping time since the filtration $\cF_t$ is right continuous. Notice that $\bE\bar{\tau} = r$ by monotone convergence theorem.

We show that there exists a solution $u$ in $\cH_p^{1}(\bar{\tau})$. Since $\tau_n$ is nondecreasing, due to the uniqueness, there exists $u\in\cH_{p,loc}^{1}(\bar{\tau})$ such that $u(0,\cdot) = u_0$. Note that for each $n\in\bN$, $u\in\cH_p^1(\tau_n)$ satisfies the equation
\begin{equation} \label{local_sol_satisfies_eq}
du = \left(a^{ij}u_{x^ix^j} + b^{i}u_{x^i} + cu + \bar{b}^i u^\lambda u_{x^i}\right)dt + \sigma^k (u) dw_t^k,\quad t\leq \tau_n\,;\quad u(0,\cdot) = u_0(\cdot).
\end{equation}
By Lemma \ref{prop_of_bessel_space} \eqref{bounded_operator}, \eqref{pointwise_multiplier} and  Lemma \ref{Lq_bound_infinite_noise}, we have
\begin{equation} \label{nonlinear term estimate}
\begin{aligned}
\left\| \bar{b}^i u^\lambda u_{x^i} \right\|^p_{\bH_p^{-1}(\tau_n)} & \leq N\bE\int_0^{\tau_n} \int_{\bR^d} |u(s,x)|^{p(1+\lambda)} dxds \leq N\bE\int_{\bR^d}|u_0(x)|^{p(1+\lambda)} dx,
\end{aligned}
\end{equation}
where $N = N(\lambda,d,p,K,T)$. It should be noted that $N$ is independent of $n$. Therefore, by Theorem \ref{theorem_nonlinear_case} and \eqref{nonlinear term estimate}, we have
\begin{equation*}
\begin{aligned}
\| u \|^p_{\bH_p^1(\tau_n)}
&\leq \| u \|^p_{\cH_p^{1}(\tau_n)} \\
&\leq N\| u_0 \|^p_{U_p^1} + N\left\| \bar{b}^i u^\lambda u_{x^i}  \right\|_{\bH_p^{-1}(\tau_n)}^p \\
&\leq N\| u_0 \|^p_{U_p^1} + N\| u_0 \|_{L_{p(1+\lambda)}(\Omega;L_{p(1+\lambda)}(\bR^d))}^{p(1+\lambda)},
\end{aligned}
\end{equation*}
where $N = N(\lambda,d,p,K,T)$. Since $N$ is independent of $n$, by taking $n\to\infty$, we obtain
\begin{equation}
\label{hp-1-valued_conti_ft}
\| u \|_{\bH^{1}_p(\bar{\tau})} + \left\| \bar{b}^i u^\lambda u_{x^i} \right\|_{\bH_p^{-1}(\bar\tau)} < \infty.
\end{equation}
\eqref{hp-1-valued_conti_ft} implies that the right-hand side of equation \eqref{local_sol_satisfies_eq} is an $H_{p}^{-1}$-valued continuous function on $[0,\bar{\tau}]$. Therefore, $u$ is continuously extendible to $[0,\bar\tau]$, and thus, $u$ satisfies equation \eqref{local_sol_satisfies_eq}  for all $t\leq \bar{\tau}$ almost surely. Therefore, we have $u \in \cH_{p}^{1}(\bar{\tau})$. 

Now, we show $\bar{\tau} = \tau$ (a.s.). Suppose that it is not true. Then, $P\left(\{\omega \in \Omega: \bar{\tau}<\tau \}\right)>0$. We choose $\kappa\in(0,1)$ such that
\begin{equation*}
p > \frac{d+2}{1-\kappa} > d+2.
\end{equation*}
By Lemma \ref{prop_of_bessel_space} \eqref{sobolev-embedding} and Theorem \ref{embedding} \eqref{large-p-embedding}, we have 
\begin{equation*}
\| u(\bar{\tau},\cdot) \|^p_{U_p^{1-\kappa}} = \bE\| u(\bar{\tau},\cdot) \|^p_{H_p^{1-\kappa-2/p}} \leq \bE\sup_{t\leq \bar{\tau}}\| u(t,\cdot) \|^p_{H_p^{1-\kappa-2/p}} \leq N\| u \|_{\cH_p^1(\bar{\tau})}^p < \infty.
\end{equation*}
Set $\bar{u}_0(\cdot) = u(\bar{\tau},\cdot)$, $\cF_t^{\bar{\tau}} := \cF_{t+\bar{\tau}}$, and $\bar{w}_t^k:=w_{t+\bar{\tau}}^k - w_{\bar{\tau}}^k$. Then, $\bar{w}_t^k$ are independent Wiener processes relative to $\cF_t^{\bar{\tau}}$, $\bar{u}_0$ is $\cF_0^{\bar{\tau}}$-measurable, and $\tilde{\tau}:=\tau - \bar{\tau}$ is a nonzero bounded stopping time with respect to $\cF_t^{\bar{\tau}}$. Consider equation
\begin{equation} \label{extended_sol}
d\bar{v} = \left(a^{ij}\bar{v}_{x^ix^j} + b^i \bar{v}_{x^i} + c\bar{v} + \bar{b}^i\bar{v}^\lambda \bar{v}_{x^i}\right) dt + \sigma^k(\bar{v}) d\bar w_t^k,\quad t\leq \tilde{\tau}\,;\quad \bar{v}(0,\cdot) = \bar{u}_0.
\end{equation}
Then, by Lemma \ref{local_existence}, there exists a nonzero stopping time $\tilde{\tau}' \leq \tilde\tau$ (with respect to $\cF_t^{\bar{\tau}}$) such that equation \eqref{extended_sol} has an $\cF_{t}^{\bar{\tau}}$-adapted nonnegative solution $v\in\cH_p^{1-\kappa}(\tilde{\tau}')$.
Set $\tau_0 := \bar{\tau} + \tilde{\tau}'$. Note that $\tau_0 \leq \tau$ is a bounded stopping time and $\bE\tau_0 > r$. Define
\begin{equation*}
w(t,x) := 
\begin{cases}
u(t,x),\quad&\text{if}\quad t\leq \bar{\tau}, \\ 
v(t - \bar{\tau},x),\quad&\text{if}\quad  \bar{\tau} < t \leq \tau_0.
\end{cases}
\end{equation*}
Then, $w$ satisfies equation \eqref{burger's_eq_white_noise_in_time} for $t\leq \tau_0$ almost surely and $w\in \cH_p^{1-\kappa}(\tau_0)$. For $m\in\bN$, define 
\begin{equation*}
\tau_m := \inf\left\{ t\geq0 : \sup_{x\in\bR^d}|w(t,x)| \geq m \right\} \wedge \tau_0.
\end{equation*}
Due to Corollary \ref{embedding_corollary}, $\tau_m$ is a well-defined stopping time and $\tau_m\uparrow \tau_0$ (a.s.) as $m\to\infty$. Note that $\tau_m\leq T$ for any $m\in\bN$. Fix $m\in\bN$. Observe that $w\in\cH_{p}^{1-\kappa}(\tau_m)$ is nonnegative and satisfies \eqref{cut_off_equation_large_lambda} for all $t\leq \tau_m$ almost surely. On the other hand, according to Lemma \ref{cut_off_lemma_large_lambda}, there exists $\bar{w}_m\in\cH_p^{1}(\tau_m)$ satisfying the equation
\begin{equation} \label{equation_regularity_extending_infinite_noise_1}
d\bar{w}_m = \left(a^{ij}\bar{w}_{mx^ix^j} + b^i \bar{w}_{mx^i} + c\bar{w}_m +  \bar{b}^i\left((\bar{w}_{m+})^{1+\lambda} h_m(\bar{w}_m)\right)_{x^i}  \right) dt + \sigma^k(\bar{w}_m) dw_t^k
\end{equation}
with $\bar{w}_m(0,\cdot) = u_0(\cdot)$ for all $t\leq \tau_m$ almost surely. Since $w\in\cH_p^{1-\kappa}(\tau_m)$ satisfies equation \eqref{equation_regularity_extending_infinite_noise_1}, according to the uniqueness of Lemma \ref{cut_off_lemma_large_lambda}, we have $w = \bar w_m$ in $\cH_p^{1-\kappa}(\tau_m)$. Thus, $w$ is in $\cH_{p}^1(\tau_m)$ and $w$ satisfies \eqref{burger's_eq_white_noise_in_time} for all $t \leq \tau_m$ almost surely with initial data $w(0,\cdot) = u_0(\cdot)$.  As before, by Theorem \ref{theorem_nonlinear_case}, Lemma \ref{prop_of_bessel_space} \eqref{bounded_operator}, \eqref{pointwise_multiplier}, and  Lemma \ref{Lq_bound_infinite_noise}, we have
\begin{equation*}
\begin{aligned}
\| w \|^p_{\bH_p^1(\tau_m)} 
&\leq N\| w \|^p_{\cH_p^1(\tau_m)} \\
&\leq N\| u_0 \|^p_{U_p^1} + \left\| \bar{b}^i w^\lambda w_{x^i} \right\|_{\bH_p^{-1}(\tau_m)}^p \\
&\leq N\| u_0 \|^p_{U_p^1} +  N\| u_0 \|^{p(1+\lambda)}_{L_{p(1+\lambda)}(\Omega;L_{p(1+\lambda)}(\bR^d))},
\end{aligned}
\end{equation*}
where $N = N(\lambda,d,p,K,T)$. Since $N$ is independent of $m$, by taking $m\to\infty$, we have
\begin{equation*}
\| w \|_{\bH^{1}_p(\tau_0)} + \left\| \bar{b}^i w^\lambda w_{x^i} \right\|_{\bH_p^{-1}(\tau_0)} < \infty.
\end{equation*}
Again, this inequality implies that the right hand side of equation \eqref{burger's_eq_white_noise_in_time} is an $H_{p}^{-1}$-valued continuous function on $[0,\tau_0]$, and thus, $w$ satisfies equation \eqref{burger's_eq_white_noise_in_time} for all $t\leq\tau_0$ almost surely. Therefore, we have $w \in \cH_{p}^{1}(\tau_0)$. 
Since $\bE\tau_0 > r$ and $\tau_0\in \Pi$, this statement implies a contradiction. Thus, $\bar{\tau} = \tau$ (a.s.). 

For each bounded stopping time $\tau$, there exists a solution $u_\tau\in\cH_{p}^1(\tau)$ to equation \eqref{burger's_eq_white_noise_in_time}. Define
\begin{equation*}
u = u_T \quad\text{on}\quad [0,T],
\end{equation*}
where $T\in\bN$. Due to the uniqueness in $\cH_{p}^1(T)$, the function $u$ is well-defined. By Definition \ref{definition_of_sol_space} \eqref{def_of_local_sol_space}, $u\in\cH_{p,loc}^1$. \\

{\it Step 3. (H\"older regularity). } Let $T<\infty$. Since $u\in \cH_p^1(T)$, by employing Corollary \ref{embedding_corollary}, we derive \eqref{holder_regularity_main_theorem}. The theorem is proved.

\end{proof}

\begin{proof}[\bf{Proof of Theorem \ref{uniqueness_in_p_1}}]
Let $q>p$. By Theorem \ref{theorem_burgers_eq_white_noise_in_time_large_lambda}, there exists a unique solution $\bar{u}\in \cH_{q,loc}^{1}$ satisfying equation \eqref{burger's_eq_white_noise_in_time}. In addition, according to \ref{definition_of_sol_space} \eqref{def_of_local_sol_space}, there exists $\tau_n$ such that $\tau_n\to\infty$ (a.s.) as $n\to\infty$, $u\in\cH_p^1(\tau_n)$ and $\bar{u} \in\cH_q^1(\tau_n)$. Fix $n\in\bN$. Since $d+2 < p < q$, we can define $\tau_{m,n}$ $(m\in\bN)$ as in \eqref{stopping_time_cutting}. As in \eqref{for_any_p0}, for any $p_0 > p$, we have
\begin{equation*}
u\in \bL_{p_0}(\tau_{m,n}).
\end{equation*}
Observe that $\bar{b}^i \left( u^{1+\lambda} \right)_{x^i} \in \bH_q^{-1}(\tau_{m,n})$. Indeed, by \eqref{boundedness_of_deterministic_coefficients_white_noise_in_time}, Lemma \ref{prop_of_bessel_space} \eqref{bounded_operator}, and \eqref{pointwise_multiplier}, we have
\begin{equation*}
\begin{aligned}
&\bE \int_0^{\tau_{m,n}} \left\| \bar{b}^i(s,\cdot)  \left( (u(s,\cdot))^{1+\lambda} \right)_{x^i} \right\|_{H_q^{-1}}^q ds  \leq N \bE \int_0^{\tau_{m,n}} \int_{\bR^d}|u(s,x)|^{q(1+\lambda)} dxds < \infty.
\end{aligned}
\end{equation*}
additionally, according again to Lemma \ref{prop_of_bessel_space} \eqref{pointwise_multiplier}, we have
\begin{equation*}
\begin{aligned}
&a^{ij}u_{x^ix^j} \in \bH_{q}^{-2}(\tau_{m,n}), \quad b^{i} u_{x^i} \in \bH_{q}^{-1}(\tau_{m,n}), \quad\text{and}\quad c u \in \bL_{q}(\tau_{m,n}).
\end{aligned}
\end{equation*}
Therefore, since $\bL_{q}(\tau_{m,n})\subset \bH_{q}^{-1}(\tau_{m,n}) \subset \bH_{q}^{-2}(\tau_{m,n})$, Lemma \ref{prop_of_bessel_space} \eqref{norm_bounded} implies that
\begin{equation} \label{deterministic_part_of_u_infinite_noise_1}
a^{ij}u_{x^ix^j} + b^{i} u_{x^i} + c u + b^i\left(u^{1+\lambda} \right)_{x^i} \in \bH_{q}^{-2}(\tau_{m,n}).
\end{equation}
By \eqref{boundedness_of_stochastic_coefficients_white_noise_in_time_large_lambda}, we have
\begin{equation} \label{proof_of_claim_stochastic_part}
\|\sigma(u)\|^q_{\bL_q(\tau_{m,n},\ell_2)} = \bE\int_0^{\tau_{m,n}} \int_{\bR^d} \left(\sum_{k} \left| \sigma^k(s,x,u(s,x)) \right|^2\right)^{q/2} dxds \leq N\left\| u \right\|_{\bL_q(\tau_{m,n})}^q < \infty.
\end{equation}
Therefore, we have
\begin{equation} \label{stochastic_part_of_u_infinite_noise_1}
\sigma( u ) \in \bL_q(\tau_{m,n},\ell_2) \subset \bH_{q}^{-1}(\tau_{m,n},\ell_2).
\end{equation}
Due to \eqref{deterministic_part_of_u_infinite_noise_1} and \eqref{stochastic_part_of_u_infinite_noise_1}, the right-hand side of equation \eqref{burger's_eq_white_noise_in_time} is an $H_{p}^{-2}$-valued continuous function on $[0,\tau_{m,n}]$. Thus, $u$ is in $\cL_q(\tau_{m,n})$, and $u$ satisfies \eqref{burger's_eq_white_noise_in_time} for all $t\leq \tau_{m,n}$ almost surely with $u(0,\cdot) = u_0(\cdot)$. On the other hand, since $\bar{b}^i \left( u^{1+\lambda} \right)_{x^i} \in \bH_q^{-1}(\tau_{m,n})$ and $\sigma( u ) \in \bL_q(\tau_{m,n},\ell_2) $, Theorem \ref{theorem_nonlinear_case} implies that there exists $v\in \cH_q^{1}(\tau_{m,n})$ satisfying
\begin{equation}
\label{equation_in_proof_of_consistency_infinite_noise_1}
dv = \left(a^{ij}v_{x^ix^j} + b^i v_{x^i} + cv +  \bar{b}^i\left( u^{1+\lambda}  \right)_{x^i}  \right) dt + \sigma^k(u) dw_t^k, \quad 0<t\leq\tau_{m,n}\,; \quad v(0,\cdot)=u_0. 
\end{equation}
Note that in \eqref{equation_in_proof_of_consistency_infinite_noise_1}, $\bar{b}^i\left( u^{1+\lambda}  \right)_{x^i}$ and $\sigma^k(u)$ are used, instead of $\bar{b}^i\left( v^{1+\lambda}  \right)_{x^i}$ and $\sigma^k(v)$, respectively. Since $u\in\cL_q(\tau_{m,n})$ satisfies equation \eqref{equation_in_proof_of_consistency_infinite_noise_1}, $w := u-v\in \cL_q(\tau_{m,n})$ satisfies 
\begin{equation*}
dw = \left(a^{ij}w_{x^ix^j} + b^i w_{x^i} + cw \right) dt, \quad 0<t\leq\tau_{m,n}\,; \quad w(0,\cdot)=0. 
\end{equation*}
By Theorem \ref{theorem_nonlinear_case}, we have $u = v$ in $\cL_q(\tau_{m,n})$. Therefore, $u$ is in $\cH_q^{1}(\tau_{m,n})$. Note that $\bar{u}\in \cH_q^1(\tau_{m,n})$ satisfies equation \eqref{burger's_eq_white_noise_in_time}. By Theorem \ref{theorem_burgers_eq_white_noise_in_time_large_lambda}, we have $u = \bar{u}$ in $\cH_q^{1}(\tau_{m,n})$.  The theorem is proved.
\end{proof}

\vspace{2mm}

\section{Proof of the second case: \texorpdfstring{$\lambda\in(0,1/d)$}{Lg} and super-linear diffusion coefficient }
\label{proof_of_theorem_white_noise_in_time_small_lambda}

This section contains proofs of Theorems \ref{theorem_burgers_eq_white_noise_in_time_small_lambda} and \ref{uniqueness_in_p_2}. To provide motivation for the proof, we introduce a lemma. 
\begin{lemma}
\label{lemma: sol of eq with xi}
Let $r,r_0\in(d,\infty)$, $T\in(0,\infty)$, and $\kappa\in (d/r \vee d/r_0 , 1)$. Suppose Assumptions \ref{assumptions_on_coefficients_deterministic_and_linear_part}, \ref{assumptions_on_coefficients_deterministic_and_nonlinear_part} and \ref{stochastic_part_assumption_on_coeffi_white_noise_in_time_small_lambda} hold. Assume that there exists $K<\infty$ such that
\begin{equation}
\label{condition of xi,xi_0}
\sup_{s\leq T}\|\xi(s,\cdot)\|_{L_r} + \sup_{s\leq T}\|\xi_0(s,\cdot)\|_{L_{r_0}} < K  \quad \text{(a.s.)}.
\end{equation} 
Then, for a bounded stopping time $\tau\leq T$ and nonnegative initial data $u_0\in U_p^{1-\kappa}$, there exists $v\in\cH_p^{1-\kappa}(\tau)$ such that
\begin{equation}
\label{example equation}
dv = \left(a^{ij}v_{x^ix^j} + b^i v_{x^i} + cv + \bar{b}^i\left( \xi v \right)_{x^i}\right) dt + \mu^k\xi_0\,v\, dw_t^k,\quad t > 0\,; \quad v(0,\cdot) = u_0(\cdot).
\end{equation}
\end{lemma}
\begin{proof}
Observe that for $v_1, v_2\in \cH_p^{1-\kappa}(\tau)$, we have
\begin{equation*}
\begin{aligned}
& \left\| \bar b^i \left(\xi (v_1 - v_2) \right)_{x^i} \right\|_{\bH_{p}^{-1-\kappa}(\tau)}^p + N\left\| \mu \xi_0 (v_1 - v_2) \right\|_{\bH_{p}^{-\kappa}(\tau,\ell_2)}^p \\
&\leq N\left\|  \xi (v_1 - v_2) \right\|_{\bH_{p}^{-\kappa}(\tau)}^p + N\left\| \xi_0 (v_1 - v_2) \right\|_{\bH_{p}^{-\kappa}(\tau)}^p \\
& \leq  N\bE\int_0^{\tau}\int_{\bR^d} \left( \int_{\bR^d} R_{\kappa}(x-y)|\xi(s,y)||v_1(s,y) - v_2(s,y)| dy \right)^p dxds \\
&\quad +  N \bE\int_0^{\tau}\int_{\bR^d} \left( \int_{\bR^d} R_{\kappa}(x-y)|\xi_0(s,y)||v_1(s,y) - v_2(s,y)| dy \right)^p dxds\\
&\leq N\bE\int_0^{\tau}  \|\xi(s,\cdot)\|_{L_r}^p \int_{\bR^d} \left( \int_{\bR^d} |R_{\kappa}(x-y)|^{\frac{r}{r-1}}|v_1(s,y) - v_2(s,y)|^{\frac{r}{r-1}} dy \right)^{p(1-1/r)} dxds \\
& \quad + N\bE\int_0^{\tau} \|\xi(s,\cdot)\|_{L_{r_0}}^p \int_{\bR^d} \left( \int_{\bR^d} |R_{\kappa}(x-y)|^{\frac{r_0}{r_0-1}}|v_1(s,y) - v_2(s,y)|^{\frac{r_0}{r_0-1}} dy \right)^{p(1-1/r_0)} dxds \\
&\leq N\bE\int_0^{\tau}\left(  \| \xi(s,\cdot) \|_{L_r}^{p/r}\| R_{\kappa} \|_{L_{\frac{r}{r-1}}}^p + \| \xi_0(s,\cdot) \|_{L_{r_0}}^{p/r_0}\| R_{\kappa} \|_{L_{\frac{r_0}{r_0-1}}}^p \right)\int_{\bR^d}|v_1(s,x) - v_2(s,x)|^p dxds, \\
&\leq N\|v_1 - v_2 \|_{\bL_p(\tau)}^p, \\
&\leq \ep \| v_1 - v_2 \|_{\bH^{1-\kappa}_p(\tau)}^p +  N \| v_1 - v_2 \|_{\bH^{-\kappa}_p(\tau)}^p,
\end{aligned}
\end{equation*}
where $R_\kappa$ is the function introduced in Remark \ref{Kernel}. Note that Lemma \ref{prop_of_bessel_space} \eqref{multi_ineq} is used to obtain the last inequality. Then, by Theorem \ref{theorem_nonlinear_case}, the lemma is proved.

\end{proof}

To obtain the unique $L_p$-solvability of equation \eqref{burger's_eq_white_noise_in_time_small} using Lemma \ref{lemma: sol of eq with xi}, we consider equation \eqref{example equation} with $\xi = |u|^\lambda$ and $\xi_0 = |u|^{\lambda_0}$. As explained in Remark \ref{remark: discussion of regularity in super-linear case}, we estimate a uniform $L_1(\bR^d)$ bound for the solutions to obtain the unique $L_p$-solvability of equation \eqref{burger's_eq_white_noise_in_time_small}. Since the uniform $L_1(\bR^d)$ bound of $u$ is provided in Lemma \ref{L_1_bound}, if we set $s = 1/\lambda$ and $s_0 = 1/\lambda_0$ to control the coefficients $\xi = |u|^{\lambda}$ and $\xi_0 = |u|^{\lambda_0}$, then $\xi = |u|^\lambda$ and $\xi_0 = |u|^{\lambda_0}$ satisfy \eqref{condition of xi,xi_0}. Note that $\lambda$ and $\lambda_0$ should be smaller than $1/d$ since 
$1/\lambda = s > d$ and $1/\lambda_0 = s_0 > d$. In detail, for given $\kappa\in ((\lambda d) \vee (\lambda_0 d),1)$, \eqref{nonlinear_estimate} of Theorem \ref{theorem_nonlinear_case} and \eqref{main_computation_Non_explosion_small_lambda} imply
\begin{align*}
\|u\|_{\cH_p^{1-\kappa}(\tau)}
&\leq N\left(\|u_0\|_{U_p^{1-\kappa}} + \| \bar b^i \xi u \|_{\bH_p^{-3/2-\kappa}(\tau)} + \| \mu \xi_0 u \|_{H_p^{-1/2-\kappa}(\ell_2)}\right) \\
&\leq N\|u_0\|_{U_p^{1-\kappa}} + N\bE\int_0^\tau \left(\| \xi(s,\cdot) \|_{L_s}^{1/s} + \| \xi_0(s,\cdot) \|_{L_{s_0}}^{1/{s_0}}\right)\| u(s,\cdot) \|_{L_p}ds,
\end{align*}
where $s = 1/\lambda$, $s_0 = 1/\lambda_0$, and $N = N(d,p,\kappa,K,T)$. Since $\| \xi(s,\cdot) \|_{L_s} = \| \xi_0(s,\cdot) \|_{L_{s_0}} = \| u(s,\cdot) \|_{L_1}$ is uniformly bounded by the initial data $\|u_0\|_{L_1}$, we can extend $u$ to a global solution; see Lemma \ref{Non_explosion_small_lambda}. 
To demonstrate that $\| u(t,\cdot) \|_{L_1}$ is bounded for all $t\leq T$ almost surely, we apply the maximum principle and take the expectation to equation \eqref{burger's_eq_white_noise_in_time_small}. Then, we obtain that $\| u(t,\cdot) \|_{L_1}$ is a continuous local martingale, and thus, all the paths of $\| u(t,\cdot) \|_{L_1}$ are bounded.

This section is organized as follows. In Lemma \ref{cut_off_lemma_small_lambda}, we prove that there exist local solutions to equation \eqref{burger's_eq_white_noise_in_time_small}. To obtain a uniform $L_1(\bR^d)$ bound of the solution, an auxiliary function $\psi_{k}$ is introduced in Lemma \ref{test_function_for_small_lambda}. In Lemmas \ref{L_1_bound} and \ref{Non_explosion_small_lambda}, we show that $\bE\sup_{t\leq T}\|u(t,\cdot)\|^{1/2}_{L_1}$ can be controlled by the initial data $u_0$ and that the local existence time can be extended. At the end of this section, we provide the proofs of Theorems \ref{theorem_burgers_eq_white_noise_in_time_small_lambda} and \ref{uniqueness_in_p_2}.

\vspace{2mm}

Recall that $h(z)\in C^1(\bR)$ such that $h(z) = 1$ on $|z|\leq 1$ and $h(z) = 0$ on $|z|\geq2$. 

\begin{lemma} \label{cut_off_lemma_small_lambda}
Let $\lambda \in(0,\infty)$, $\lambda_0\in(0,\infty)$, $T\in(0,\infty)$, $p \geq 2$, and $h_m$ is the function introduced in \eqref{def of hm}. Suppose Assumptions \ref{assumptions_on_coefficients_deterministic_and_linear_part}, \ref{assumptions_on_coefficients_deterministic_and_nonlinear_part}, and \ref{stochastic_part_assumption_on_coeffi_white_noise_in_time_small_lambda} hold. Then, for a bounded stopping time $\tau\leq T$, $m\in\bN$, $\kappa\in [0,1)$, and nonnegative initial data $u_0\in U_{p}^{1-\kappa}$, there exists a unique $u_m \in \cH_p^{1-\kappa}(\tau)$ such that $u_m$ satisfies equation 
\begin{equation} 
\label{cut_off_equation_small_lambda}
du = \left(a^{ij}u_{x^ix^j} + b^i u_{x^i} + cu +  \bar{b}^i\left(u_+^{1+\lambda }h_m(u) \right)_{x^i}  \right) dt + \mu^k u_+^{1+\lambda_0}h_m(u) dw_t^k,\, (t,x)\in(0,\tau)\times\bR^d
\end{equation}
with $u(0,\cdot) = u_0(\cdot)$ and $u_m\geq0$. 
\end{lemma}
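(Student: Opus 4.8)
I would run exactly the scheme used for Lemma~\ref{cut_off_lemma_large_lambda}: first observe that after the $h_m$-truncation both nonlinearities are globally Lipschitz in $u$, so that equation~\eqref{cut_off_equation_small_lambda} fits the framework of Theorem~\ref{theorem_nonlinear_case}, and then recover nonnegativity from a maximum principle for SPDEs. The only feature that is not already present in Lemma~\ref{cut_off_lemma_large_lambda} is the \emph{$u$-dependent} diffusion coefficient $\mu^k u_+^{1+\lambda_0}h_m(u)$ in the stochastic integral, and this is precisely the point where the truncation pays off.

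\textbf{Step 1 (pointwise Lipschitz bounds).} Since $\lambda,\lambda_0>0$, the real functions $z\mapsto z_+^{1+\lambda}h_m(z)$ and $z\mapsto z_+^{1+\lambda_0}h_m(z)$ are $C^1$ with derivative supported in $[0,2m]$ and bounded by $N_m=N_m(\lambda,\lambda_0,m)$. Arguing verbatim as in \eqref{lipschitz_check_cut_off} (splitting into the four sign cases of $u,v$), one gets, for all $u,v\in\bR$,
\begin{equation*}
\left| u_+^{1+\lambda}h_m(u) - v_+^{1+\lambda}h_m(v) \right| + \left| u_+^{1+\lambda_0}h_m(u) - v_+^{1+\lambda_0}h_m(v) \right| \leq N_m |u-v|.
\end{equation*}

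\textbf{Step 2 (reduction to Theorem~\ref{theorem_nonlinear_case}).} Because $\bar b^i=\bar b^i(t,\bar x^i)$ is independent of $x^i$, the drift nonlinearity can be written as $f(t,x,u):=\bar b^i\big(u_+^{1+\lambda}h_m(u)\big)_{x^i}=D_{x^i}\big(\bar b^i u_+^{1+\lambda}h_m(u)\big)$, while $g^k(t,x,u):=\mu^k u_+^{1+\lambda_0}h_m(u)$; both vanish at $u=0$, so $f(\cdot,0)=0\in\bH_p^{-1-\kappa}(\tau)$ and $g(\cdot,0)=0\in\bH_p^{-\kappa}(\tau,\ell_2)$. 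Put $\gamma:=-1-\kappa\in(-2,-1]$. Using Lemma~\ref{prop_of_bessel_space}\eqref{bounded_operator} (to absorb $D_{x^i}$), \eqref{pointwise_multiplier} (with the $C^2$-bound on $\bar b^i$ and the boundedness of $\mu$ from \eqref{boundedness_of_stochastic_coefficients_white_noise_in_time_small_lambda}), and \eqref{norm_bounded} (to pass from $H_p^{-\kappa}$ to $L_p=H_p^0$), together with Step~1, I get exactly as in \eqref{cut_off_lower_order_cal_1}--\eqref{estimate_for_cut_off_equation}
\begin{equation*}
\| f(u) - f(v) \|_{\bH_p^{-1-\kappa}(\tau)} + \| g(u) - g(v) \|_{\bH_p^{-\kappa}(\tau,\ell_2)} \leq N_m \| u-v \|_{\bL_p(\tau)} \leq \ep \| u-v \|_{\bH_p^{1-\kappa}(\tau)} + N_{\ep} \| u-v \|_{\bH_p^{-\kappa}(\tau)},
\end{equation*}
the last inequality being the multiplicative inequality Lemma~\ref{prop_of_bessel_space}\eqref{multi_ineq} (interpolating $H_p^0$ between $H_p^{1-\kappa}$ and $H_p^{-\kappa}$) followed by Young's inequality. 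Hence Assumption~\ref{assumption_on_f_and_g}$(\tau)$ holds with this $\gamma$, and Theorem~\ref{theorem_nonlinear_case} produces a unique $u_m\in\cH_p^{\gamma+2}(\tau)=\cH_p^{1-\kappa}(\tau)$ solving \eqref{cut_off_equation_small_lambda} with $u_m(0,\cdot)=u_0$; uniqueness is part of that theorem.

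\textbf{Step 3 (nonnegativity) and the main obstacle.} As in Lemma~\ref{cut_off_lemma_large_lambda} I first reduce to $\kappa=0$: mollify $u_0$ to get nonnegative $u_0^n\in U_p^1$ with $u_0^n\to u_0$ in $U_p^{1-\kappa}$, solve for $u_m^n\in\cH_p^1(\tau)\subset\cH_p^{1-\kappa}(\tau)$, and apply the estimate of Step~2 together with the Gr\"onwall-type inequality Theorem~\ref{embedding}\eqref{gronwall_type_ineq} (valid since $\gamma_0+2=-\kappa<1-\kappa=\gamma+2$) to obtain $\|u_m-u_m^n\|_{\cH_p^{1-\kappa}(\tau)}^p\leq N\|u_0-u_0^n\|_{U_p^{1-\kappa}}^p\to0$, so $u_m\geq0$ once each $u_m^n\geq0$. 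For $\kappa=0$ the key observation, and the only genuinely new point relative to Lemma~\ref{cut_off_lemma_large_lambda}, is that the truncated diffusion coefficient factors through $u$: $\mu^k u_{m+}^{1+\lambda_0}h_m(u_m)=\nu^k u_m$ with $\nu^k:=\mu^k u_{m+}^{\lambda_0}h_m(u_m)$ predictable and $\sum_k|\nu^k|^2\leq(2m)^{2\lambda_0}K$ bounded. Thus $u_m\in\cH_p^1(\tau)$ solves the \emph{linear} equation $du_m=\big(a^{ij}u_{m,x^ix^j}+b^i u_{m,x^i}+cu_m+D_{x^i}(\bar b^i u_{m+}^{1+\lambda}h_m(u_m))\big)dt+\nu^k u_m\,dw_t^k$ whose divergence-form free term $\bar b^i u_{m+}^{1+\lambda}h_m(u_m)$ vanishes on $\{u_m\leq0\}$, and \cite[Theorem~1.1]{krylov2007maximum} applied exactly as in Lemma~\ref{cut_off_lemma_large_lambda} (now with the additional, admissible, zeroth-order noise coefficient $\nu^k$) yields $u_m\geq0$ for all $t\leq\tau$ a.s. Everything outside this observation is a routine transcription of the proof of Lemma~\ref{cut_off_lemma_large_lambda}.
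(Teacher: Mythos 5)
Your proposal is correct and follows essentially the same route as the paper, whose proof consists precisely of recording the two truncated Lipschitz bounds of your Step 1 and then invoking the argument of Lemma \ref{cut_off_lemma_large_lambda} verbatim. Your Steps 2--3 are a faithful transcription of that argument, and your explicit factorization $\mu^k u_{m+}^{1+\lambda_0}h_m(u_m)=\nu^k u_m$ with $\sum_k|\nu^k|^2\leq(2m)^{2\lambda_0}K$ is a useful clarification of why the maximum-principle step still applies to the super-linear diffusion after truncation.
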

\begin{proof}
Note that \eqref{lipschitz_check_cut_off} and \eqref{boundedness_of_stochastic_coefficients_white_noise_in_time_small_lambda} imply that for $\omega\in\Omega, t>0$, and $u,v\in \bR$, we have
\begin{equation*}
\left| u_+^{1+\lambda }h_m(u) - v_+^{1+\lambda }h_m(v) \right| \leq N_m|u-v|
\end{equation*}
and
\begin{equation*}
\left| \mu(t,x) u_+^{1+\lambda_0}h_m(u) - \mu(t,x) v_+^{1+\lambda_0}h_m(v) \right|_{\ell_2} \leq N_m|u-v|,
\end{equation*}
where $\mu = (\mu^1,\mu^2,\dots)$. Then, by following the proof for Lemma \ref{cut_off_lemma_large_lambda}, there exists $u_m\in \cH_p^{1-\kappa}(\tau)$ satisfying \eqref{cut_off_equation_small_lambda} and $u_m\geq0$.

\end{proof}

Below, we introduce an auxiliary function that obtains a uniform $L_1$ bound of the local solutions. 

\begin{lemma} \label{test_function_for_small_lambda}
For $k = 1,2,\cdots$, set
\begin{equation*}
\psi_k(x):=\frac{1}{\cosh(|x|/k)}.
\end{equation*}
Suppose Assumptions \ref{assumptions_on_coefficients_deterministic_and_linear_part}, \ref{assumptions_on_coefficients_deterministic_and_nonlinear_part}, and \ref{stochastic_part_assumption_on_coeffi_white_noise_in_time_small_lambda} hold. Let $m\in\bN$ and $K$ be the constant described in the assumptions. Then, for all $\omega, x,t$, 
\begin{equation}
\label{negative}
\begin{aligned}
&\left(a^{ij}\psi_{k}\right)_{x^ix^j } - \left(b^i\psi_{k}\right)_{x^i} - \bar b^i \left(u_m\right)^\lambda h_m(u_m )\psi_{kx^i} + (c-4K)\psi_{k} \\
&\quad = a^{ij}\psi_{k x^ix^j} + \left(2a^{ij}_{x^j}-b^i-\bar b^i \left(u_m\right)^\lambda h_m(u_m) \right)\psi_{kx^i} + \left(a^{ij}_{x^ix^j}-b^i_{x_i}+c-4K\right)\psi_k \\
& \quad\leq
K\psi_k(x)\left[\frac{2}{k^2}+\frac{3+(2m)^\lambda}{k}-1 \right],
\end{aligned}
\end{equation}
where $N = N(d,K)$.
\end{lemma}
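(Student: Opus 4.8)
The plan is to prove the pointwise bound \eqref{negative} by a direct computation of the derivatives of $\psi_k$, together with the boundedness assumptions on the coefficients and the elementary bound $|u_m|^\lambda h_m(u_m)\le (2m)^\lambda$. First I would record the derivatives of $\psi_k(x)=1/\cosh(|x|/k)$: writing $r=|x|$, one has $\psi_{k x^i}=-\frac{1}{k}\tanh(r/k)\frac{x^i}{r}\psi_k$ away from the origin, and a second differentiation produces $\psi_{k x^i x^j}$ as a sum of three types of terms — one proportional to $\frac{1}{k^2}\psi_k$ (from differentiating $\tanh$), one proportional to $\frac{1}{k}\psi_k\cdot\frac{1}{r}$ (from differentiating $x^i/r$), and one proportional to $\frac{1}{k^2}\psi_k$ times $\frac{(x^i)^2}{r^2}$-type factors. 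The key elementary facts are $|\tanh(r/k)|\le 1$, $|\tanh(r/k)|\le r/k$, and $\mathrm{sech}(r/k)\le 1$, so that each first derivative obeys $|\psi_{k x^i}|\le \frac1k\psi_k$ and the ``dangerous'' term $\frac{1}{kr}\psi_k$ in the Hessian is in fact controlled by $\frac{1}{k^2}\psi_k$ after using $\tanh(r/k)\le r/k$. In particular $a^{ij}\psi_{kx^ix^j}$ is bounded by $\frac{N}{k^2}\psi_k$ for a constant $N=N(d,K)$, and at $x=0$ the bound persists by continuity (or by noting $\psi_k$ is smooth there with Hessian $O(1/k^2)$).

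Next I would treat the first-order grouping $\bigl(2a^{ij}_{x^j}-b^i-\bar b^i|u_m|^\lambda h_m(u_m)\bigr)\psi_{kx^i}$. By Assumptions \ref{assumptions_on_coefficients_deterministic_and_linear_part} and \ref{assumptions_on_coefficients_deterministic_and_nonlinear_part} the coefficients $a^{ij}_{x^j}$, $b^i$, $\bar b^i$ are all bounded by $K$, and $0\le |u_m|^\lambda h_m(u_m)\le (2m)^\lambda$ since $h_m$ is supported in $\{|z|\le 2m\}$; combined with $|\psi_{kx^i}|\le\frac1k\psi_k$ this contributes at most $\frac{N(1+(2m)^\lambda)}{k}\psi_k$, which after tracking constants explicitly gives the $\frac{3+(2m)^\lambda}{k}$ term in the stated estimate. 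The zeroth-order grouping $\bigl(a^{ij}_{x^ix^j}-b^i_{x^i}+c-4K\bigr)\psi_k$ is handled by the $C^2$-bounds in \eqref{boundedness_of_deterministic_coefficients_white_noise_in_time}: each of $a^{ij}_{x^ix^j}$, $b^i_{x^i}$, $c$ is bounded in absolute value by $K$, so $a^{ij}_{x^ix^j}-b^i_{x^i}+c\le 3K$, whence this grouping is $\le (3K-4K)\psi_k=-K\psi_k$, producing the $-1$ inside the bracket after factoring out $K$. Assembling the three pieces yields $\le K\psi_k\bigl[\frac{2}{k^2}+\frac{3+(2m)^\lambda}{k}-1\bigr]$ once the constants from the Hessian estimate are arranged to total $2K/k^2$ — and since $\psi_k>0$ everywhere, no sign issues arise in combining.

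The first displayed equality in \eqref{negative} is just the product rule: expanding $(a^{ij}\psi_k)_{x^ix^j}$ and $(b^i\psi_k)_{x^i}$ and collecting terms by the order of derivative falling on $\psi_k$ gives exactly the middle line, so this step is purely bookkeeping and I would state it with a one-line justification.

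I expect the main obstacle to be purely technical rather than conceptual: getting the Hessian computation of $\psi_k$ clean enough that the constant in front of $\frac{1}{k^2}\psi_k$ is genuinely $2K$ (and not merely $O(1/k^2)$), since the claimed inequality is stated with the explicit constant $2$. This requires carefully using $a^{ij}\xi^i\xi^j\le K|\xi|^2$ to bound the ellipticity contraction against the Hessian of $\psi_k$, together with the sharp bounds $|\tanh|\le 1$ and $\tanh(r/k)\le r/k$ to absorb the apparent $\frac1{kr}$ singularity; the behavior at $r=0$ must also be checked, which is easiest by observing $\psi_k(x)=1-\frac{|x|^2}{2k^2}+o(|x|^2)$ near the origin so that all derivatives up to second order are controlled by $1/k^2$ uniformly. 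Everything else is a routine application of the coefficient bounds already assumed.
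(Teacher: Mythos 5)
Your proposal is correct in substance and amounts to writing out in full the computation that the paper mostly outsources: the paper's proof of Lemma \ref{test_function_for_small_lambda} is a one-line citation of \cite[Lemma 5.5]{choi2021regularity} for the second- and zeroth-order groupings, and only displays the estimate for the one genuinely new term, namely $\bigl(2a^{ij}_{x^j}-b^i-\bar b^i (u_m)^\lambda h_m(u_m)\bigr)\psi_{kx^i} = -(\cdots)\psi_k\frac{x^i\tanh(|x|/k)}{k|x|} \le k^{-1}\bigl(2|a^{ij}_{x^j}|+|b^i|+(2m)^\lambda|\bar b^i|\bigr)\psi_k$, which is exactly your treatment of the first-order grouping. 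Your accounting of the bracket is consistent with the statement: the first-order piece gives $(3+(2m)^\lambda)K/k$ because Assumptions \ref{assumptions_on_coefficients_deterministic_and_linear_part} and \ref{assumptions_on_coefficients_deterministic_and_nonlinear_part} bound the \emph{sums} $\sum_{i,j}|a^{ij}|_{C^2}$, $\sum_i|b^i|_{C^2}$, $\sum_i|\bar b^i|_{C^2}$ by $K$, and the zeroth-order piece gives $(3K-4K)\psi_k=-K\psi_k$. The only point you flag but do not close is the constant $2K/k^2$ for $a^{ij}\psi_{kx^ix^j}$; it does come out to exactly $2$ if one writes $\psi_{kx^ix^j}=f''(r)\frac{x^ix^j}{r^2}+\frac{f'(r)}{r}\bigl(\delta^{ij}-\frac{x^ix^j}{r^2}\bigr)$ with $f(r)=\mathrm{sech}(r/k)$, uses $|f''|\le k^{-2}\psi_k$ and $|f'(r)|/r\le k^{-2}\psi_k$ (via $\tanh s\le s$), and exploits that $f'(r)/r\le 0$ so that only the one-sided bounds $0<a^{ij}x^ix^j/r^2\le K$ and $a^{ii}\ge 0$ from ellipticity are needed — no dimensional factor enters. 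With that sign observation supplied, your self-contained argument is complete and arguably more transparent than the paper's citation-based proof, at the cost of length.
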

\begin{proof}
The proof follows from \cite[Lemma 5.5]{choi2021regularity}. The only difference is 
\begin{equation*}
\begin{aligned}
&\left(2a^{ij}_{x^j} - b^i - \bar b^i \left(u_m\right)^\lambda h_m(u_m)  \right)\psi_{kx^i}(x) \\
&\quad = -\left ( 2a^{ij}_{x^j} - b^i - \bar b^i \left(u_m\right)^\lambda h_m(u_m) \right)\psi_k(x)\frac{x^i\tanh(|x|/k)}{k|x|} \\
&\quad \leq k^{-1}\left(2|a^{ij}_{x^j}|+|b^i| + (2m)^\lambda |\bar b^i|\right)\psi_k(x).
\end{aligned}
\end{equation*}

\end{proof}

\begin{lemma}
\label{L_1_bound}
Let $u_m\in\cH_p^{1-\kappa}(T)$ be the solution introduced in Lemma \ref{cut_off_lemma_small_lambda}, and we further assume $u_0\in L_1(\Omega;L_1(\bR^d))$. Then, we have
\begin{equation} 
\label{L_1_bound_small_lambda}
\bE\sup_{t\leq T}\|u_m(t,\cdot)\|_{L_1(\bR^d)}^{1/2} \leq 3e^{2KT}\bE\|u_0\|_{L_1}^{1/2}.
\end{equation}
\end{lemma}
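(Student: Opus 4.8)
The plan is to test the truncated equation \eqref{cut_off_equation_small_lambda} against the auxiliary functions $\psi_k$ of Lemma \ref{test_function_for_small_lambda}, to show that an exponentially weighted version of $(u_m(t,\cdot),\psi_k)$ is a nonnegative supermartingale, and then to recover $\|u_m(t,\cdot)\|_{L_1}$ by letting $k\to\infty$ and applying a maximal inequality at the exponent $1/2$.

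First I would fix $k,m\in\bN$ and substitute $\phi=\psi_k$ into the identity \eqref{def_of_sol} satisfied by $u_m$; since $u_m\ge0$, the drift and diffusion in \eqref{cut_off_equation_small_lambda} are $f=a^{ij}u_{mx^ix^j}+b^iu_{mx^i}+cu_m+\bar b^i(u_m^{1+\lambda}h_m(u_m))_{x^i}$ and $g^l=\mu^lu_m^{1+\lambda_0}h_m(u_m)$. Although $\psi_k$ is not a Schwartz function, it is smooth and, together with all its derivatives, bounded and exponentially decaying, so $\psi_k\in H_{p'}^{1+\kappa}$ (with $p'$ the conjugate exponent), approximable there by $C_c^\infty$ functions; since all the pairings in \eqref{def_of_sol} extend continuously to $H_{p'}^{1+\kappa}$ by Lemma \ref{prop_of_bessel_space} \eqref{norm_bounded}, the substitution is legitimate and $t\mapsto(u_m(t,\cdot),\psi_k)$ is continuous. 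Moving the spatial derivatives onto the twice continuously differentiable coefficients and onto $\psi_k$ — using Assumption \ref{assumptions_on_coefficients_deterministic_and_nonlinear_part}, so that $(\bar b^i\psi_k)_{x^i}=\bar b^i\psi_{kx^i}$, and the identity $u_m^{1+\lambda}h_m(u_m)=u_m\cdot u_m^\lambda h_m(u_m)$ — and then multiplying by $e^{-4Kt}$ and integrating by parts in $t$, I would arrive at
\begin{equation*}
e^{-4Kt}(u_m(t,\cdot),\psi_k)=(u_0,\psi_k)+\int_0^t e^{-4Ks}\big(u_m(s,\cdot),\Phi_k(s,\cdot)\big)\,ds+M^k_t,\qquad t\le T,
\end{equation*}
where $\Phi_k:=(a^{ij}\psi_k)_{x^ix^j}-(b^i\psi_k)_{x^i}+(c-4K)\psi_k-\bar b^iu_m^\lambda h_m(u_m)\psi_{kx^i}$ and $M^k_t:=\sum_l\int_0^t e^{-4Ks}\big(\mu^lu_m^{1+\lambda_0}h_m(u_m)(s,\cdot),\psi_k\big)\,dw^l_s$ has $M^k_0=0$.

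By Lemma \ref{test_function_for_small_lambda}, $\Phi_k\le K\psi_k\big(\tfrac{2}{k^2}+\tfrac{3+(2m)^\lambda}{k}-1\big)$ pointwise, so fixing $k=k(m)$ large enough that this bracket is $\le0$, and using $u_m\ge0$, the $ds$-integral above is nonincreasing in $t$. The truncation $h_m$ bounds $u_m^{1+\lambda_0}h_m(u_m)$ by $(2m)^{1+\lambda_0}$, so Cauchy--Schwarz in $x$ together with $\psi_k\in L_1$ and \eqref{boundedness_of_stochastic_coefficients_white_noise_in_time_small_lambda} shows $\bE\int_0^T\sum_l\big(\mu^lu_m^{1+\lambda_0}h_m(u_m),\psi_k\big)^2\,ds<\infty$, hence $M^k$ is a genuine martingale. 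Therefore $Y^k_t:=e^{-4Kt}(u_m(t,\cdot),\psi_k)$ is a nonnegative continuous supermartingale with $Y^k_0=(u_0,\psi_k)\le\|u_0\|_{L_1}$. Doob's maximal inequality for nonnegative supermartingales gives $P(\sup_{t\le T}Y^k_t\ge\lambda\mid\cF_0)\le(Y^k_0/\lambda)\wedge1$, and integrating the tail of $\sup_{t\le T}(Y^k_t)^{1/2}$ yields $\bE\big[\sup_{t\le T}(Y^k_t)^{1/2}\mid\cF_0\big]\le2(Y^k_0)^{1/2}$; since $(u_m(t,\cdot),\psi_k)=e^{4Kt}Y^k_t$, this gives $\bE\sup_{t\le T}(u_m(t,\cdot),\psi_k)^{1/2}\le2e^{2KT}\bE\|u_0\|_{L_1}^{1/2}$. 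Finally, because $\psi_k\uparrow1$ and $u_m\ge0$, we have $\sup_{t\le T}\|u_m(t,\cdot)\|_{L_1}^{1/2}=\lim_{k\to\infty}\sup_{t\le T}(u_m(t,\cdot),\psi_k)^{1/2}$, and monotone convergence yields \eqref{L_1_bound_small_lambda} (in fact with $2e^{2KT}$ in place of $3e^{2KT}$).

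The step I expect to be the main obstacle is making the supermartingale property rigorous: verifying that $\psi_k$ is an admissible test function and that the integrations by parts against the merely $C^2$ coefficients are valid, extracting the sign of the drift from Lemma \ref{test_function_for_small_lambda} together with $u_m\ge0$, and confirming that $M^k$ is a true (not merely local) martingale — which is exactly where the truncation $h_m$ is used. The $1/2$-power maximal inequality and the passages to the limit in $k$ and in $\omega$ are then routine.
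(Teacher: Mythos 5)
Your proposal is correct and follows essentially the same route as the paper: pair the truncated equation with $\psi_k$, use Lemma \ref{test_function_for_small_lambda} together with $u_m\ge 0$ to make the drift of $e^{-4Kt}(u_m(t,\cdot),\psi_k)$ nonpositive for $k$ large (depending on $m$), invoke the $1/2$-power maximal inequality for nonnegative supermartingales (the paper cites Krylov's Theorem III.6.8, which gives the constant $3$; your direct tail integration gives the sharper $2$), and finish by monotone convergence as $\psi_k\uparrow 1$. The extra care you take in justifying $\psi_k$ as a test function and in verifying that the stochastic integral is a true martingale via the cutoff $h_m$ is exactly what the paper's terser "by multiplying $\psi_k$, employing It\^o's formula, \dots, and taking expectation" implicitly relies on.
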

\begin{proof}
Let $\tau\leq T$ be a bounded stopping time and $\{\psi_k:k\in\bN\}$ be the sequence of functions introduced in Lemma \ref{test_function_for_small_lambda}. By multiplying $\psi_k$, employing It\^o's formula, using integration by parts, taking expectation, and applying inequality \eqref{negative}, 
we have
\begin{equation*}
\begin{aligned}
&\bE e^{-4K\tau}\int_{\bR^d}u_m(\tau,x)\psi_kdx - \bE\int_{\bR^d}u_0\psi_kdx \\
& =  \bE\int_0^\tau\int_{\bR^d}u_m\left[ a^{ij}\psi_{kx^ix^j} + \left(2a^{ij}_{x^j}-b^i\right)\psi_{kx^i} + \left(a^{ij}_{x^ix^j}-b^i_{x^i}+c-4K\right)\psi_{k} \right] dx e^{-4Ks} ds \\
&\quad - \bE\int_0^\tau\bar b^i(s)\int_{\bR^d}(u_{m+})^{1+\lambda}h_m(u_m)\psi_{kx^i} dx e^{-4Ks} ds \\
&\leq  N\bE\int_0^\tau \int_{\bR^d}u_m\psi_k\left[\frac{2}{k^2}+\frac{3+(2m)^\lambda}{k}-1 \right] dxds,
\end{aligned}
\end{equation*}
where $N = N(d,K)$. Note that there exists $M = M(m)$ such that if $k\geq M$, $\frac{2}{k^2}+\frac{3+(2m)^\lambda}{k}-1 \leq 0$. Thus, for $k\geq M$, we have
$$ \bE e^{-4K\tau}\int_{\bR^d}u_m(\tau,x)\psi_k(x)dx \leq \bE\| u_0 \|_{L_1}.
$$
Therefore, we have (e.g. \cite[Theorem III.6.8]{diffusion})
\begin{equation*}
\bE\sup_{t\leq T}\left(\int_{\bR^d}u_m(t,x) \psi_k(x)dx\right)^{1/2} \leq 3e^{2 KT}\bE\|u_0\|_{L_1}^{1/2}.
\end{equation*}
By letting $k\to\infty$, the monotone convergence theorem implies that 
\begin{equation} \label{L_1 est.}
\bE\sup_{t\leq T}\|u_m(t,\cdot)\|_{L_1}^{1/2} \leq 3e^{2KT}\bE\|u_0\|_{L_1}^{1/2}.
\end{equation}
The lemma is proved.
\end{proof}

\begin{lemma} \label{Non_explosion_small_lambda}
Let $\lambda ,\lambda_0\in(0,1/d)$, $T\in(0,\infty)$, $p > \frac{d+2}{1- (\lambda d) \vee (\lambda_0 d)}$, and $\kappa\in(\lambda d\vee\lambda_0 d,1)$. Let $u_m\in\cH_p^{1-\kappa}(T)$ be the solution to equation \eqref{cut_off_equation_small_lambda} introduced in Lemma \ref{cut_off_lemma_small_lambda}. Then, we have
\begin{equation} 
\label{stopping_time_blow_up}
\lim_{R\to\infty}\sup_mP\left( \left\{\omega\in\Omega : \sup_{t\leq T,x\in\bR^d}|u_m(t,x)|\geq R \right\} \right) = 0.
\end{equation}
\end{lemma}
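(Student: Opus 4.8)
The plan is to freeze the super-linear ingredients, recast \eqref{cut_off_equation_small_lambda} as a \emph{linear} equation whose zeroth-order coefficients are controlled by $\|u_m(t,\cdot)\|_{L_1}$, and then combine the uniform $L_1$ bound of Lemma \ref{L_1_bound} with the a priori estimate \eqref{nonlinear_estimate} of Theorem \ref{theorem_nonlinear_case} and the Gr\"onwall-type inequality \eqref{gronwall_type_ineq}. Since $u_m\ge 0$ by Lemma \ref{cut_off_lemma_small_lambda}, I would write $u_{m+}^{1+\lambda}h_m(u_m)=\xi u_m$ and $u_{m+}^{1+\lambda_0}h_m(u_m)=\xi_0 u_m$ with $\xi:=u_m^\lambda h_m(u_m)$, $\xi_0:=u_m^{\lambda_0}h_m(u_m)$, so that $u_m$ solves
\begin{equation*}
du_m=\bigl(a^{ij}u_{mx^ix^j}+b^iu_{mx^i}+cu_m+\bar b^i(\xi u_m)_{x^i}\bigr)\,dt+\mu^k\xi_0 u_m\,dw_t^k,\qquad u_m(0,\cdot)=u_0.
\end{equation*}
With $s:=1/\lambda$ and $s_0:=1/\lambda_0$, the pointwise bounds $0\le h_m\le 1$ and $\lambda s=\lambda_0 s_0=1$ give $\|\xi(t,\cdot)\|_{L_s}\le\|u_m(t,\cdot)\|_{L_1}^{\lambda}$ and $\|\xi_0(t,\cdot)\|_{L_{s_0}}\le\|u_m(t,\cdot)\|_{L_1}^{\lambda_0}$, and the hypotheses $\lambda,\lambda_0\in(0,1/d)$ with $\kappa>(\lambda d)\vee(\lambda_0 d)$ say exactly that $d/s<\kappa$ and $d/s_0<\kappa$.

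The analytic core, which I would establish first, is the Sobolev multiplication estimate: using the pointwise-multiplier and differentiation properties of Lemma \ref{prop_of_bessel_space} together with a duality argument and the embedding $H^{\kappa}_{p'}\hookrightarrow L_r$ (available precisely because $d/s,d/s_0<\kappa$) and the boundedness of $\bar b^i\in C^2$ and of $\mu$, one gets for a.e.\ $t$
\begin{equation*}
\bigl\|\bar b^i(\xi(t,\cdot)\,v)_{x^i}\bigr\|_{H_p^{-1-\kappa}}\le N\|\xi(t,\cdot)\|_{L_s}\|v\|_{L_p},\qquad
\|\mu\,\xi_0(t,\cdot)\,v\|_{H_p^{-\kappa}(\ell_2)}\le N\|\xi_0(t,\cdot)\|_{L_{s_0}}\|v\|_{L_p},
\end{equation*}
with $N=N(d,p,\kappa,K)$. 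To turn these random coefficients into deterministic ones I would localize: for $L>0$ and fixed $m$, set
\begin{equation*}
\tau_{L,m}:=\inf\{t\ge 0:\|u_m(t,\cdot)\|_{L_1}\ge L\}\wedge T,
\end{equation*}
which is a stopping time since $t\mapsto\|u_m(t,\cdot)\|_{L_1}$ is lower semicontinuous (an increasing limit of continuous maps $t\mapsto\int_{|x|\le R}u_m(t,x)\,dx$, continuity coming from Corollary \ref{embedding_corollary}); on $\{t\le\tau_{L,m}\}$ one then has $\|\xi(t,\cdot)\|_{L_s}\le L^{\lambda}$ and $\|\xi_0(t,\cdot)\|_{L_{s_0}}\le L^{\lambda_0}$.

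Next I would regard $u_m$ on $[0,\tau_{L,m}]$ as the $\cH_p^{1-\kappa}(\tau_{L,m})$-solution of the linear equation above with the \emph{fixed} free terms $f:=\bar b^i(\xi u_m)_{x^i}\in\bH_p^{-1-\kappa}(\tau_{L,m})$ and $g:=\mu\,\xi_0\,u_m\in\bH_p^{-\kappa}(\tau_{L,m},\ell_2)$ (both memberships inherited from $u_m\in\cH_p^{1-\kappa}(T)$). Since $f,g$ do not depend on the unknown, Assumption \ref{assumption_on_f_and_g} holds with $N_\ep=0$, so \eqref{nonlinear_estimate} applies with a constant $N=N(d,p,\kappa,K,T)$ \emph{independent of $m$}; plugging in the multiplication estimate and the bounds $\|\xi\|_{L_s}\le L^{\lambda}$, $\|\xi_0\|_{L_{s_0}}\le L^{\lambda_0}$ valid up to $\tau_{L,m}$ would give, for every $t\le T$,
\begin{equation*}
\|u_m\|_{\cH_p^{1-\kappa}(\tau_{L,m}\wedge t)}^p\le N\|u_0\|_{U_p^{1-\kappa}}^p+N\bigl(L^{\lambda p}+L^{\lambda_0 p}\bigr)\|u_m\|_{\bL_p(\tau_{L,m}\wedge t)}^p.
\end{equation*}
As $\bL_p=\bH_p^{0}$ and $0<1-\kappa$ (i.e.\ $\gamma_0:=-2<\gamma:=-1-\kappa$, so only $\kappa<1$ is needed), Theorem \ref{embedding}\,\eqref{gronwall_type_ineq} would yield $\|u_m\|_{\cH_p^{1-\kappa}(\tau_{L,m})}^p\le C(L)$ with $C(L)=N(L^{\lambda p}+L^{\lambda_0 p},d,p,\kappa,K,T)\|u_0\|_{U_p^{1-\kappa}}^p$ independent of $m$, and then Corollary \ref{embedding_corollary} would give $\bE\sup_{t\le\tau_{L,m}}\sup_{x\in\bR^d}|u_m(t,x)|^p\le N\,C(L)$.

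Finally I would conclude by a double use of Chebyshev's inequality. Given $\delta>0$: by Lemma \ref{L_1_bound} and Chebyshev, $P\bigl(\sup_{t\le T}\|u_m(t,\cdot)\|_{L_1}\ge L\bigr)\le 3e^{2KT}\bE\|u_0\|_{L_1}^{1/2}L^{-1/2}\le\delta/2$ for $L=L(\delta)$ large, uniformly in $m$; on the complement of that event one has $\tau_{L,m}=T$, so the suprema over $[0,\tau_{L,m}]$ and $[0,T]$ coincide and
\begin{equation*}
P\Bigl(\sup_{t\le T}\sup_{x\in\bR^d}|u_m(t,x)|\ge R\Bigr)\le\frac{\delta}{2}+\frac{N\,C(L(\delta))}{R^p}\le\delta\qquad\text{for all }R\ge R(\delta),
\end{equation*}
uniformly in $m$; since $\delta>0$ is arbitrary, this is \eqref{stopping_time_blow_up}. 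The step I expect to be the main obstacle is the Sobolev multiplication estimate: passing the $x^i$-derivative across $\bar b^i$ into $H_p^{-1-\kappa}$ and then multiplying by $\xi\in L_{1/\lambda}$ (resp.\ $\xi_0\in L_{1/\lambda_0}$) forces $\lambda d<\kappa$ (resp.\ $\lambda_0 d<\kappa$), and this requirement—together with Lemma \ref{L_1_bound}, which is what lets $\|\xi(t,\cdot)\|_{L_{1/\lambda}}$ be bounded by $\|u_m(t,\cdot)\|_{L_1}^{\lambda}$—is exactly what pins down the range $\lambda,\lambda_0\in(0,1/d)$; by contrast the truncation at $\tau_{L,m}$ is merely the device that renders the Gr\"onwall coefficient deterministic so that $C(L)$ does not depend on $m$.
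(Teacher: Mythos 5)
Your proposal is correct and follows essentially the same route as the paper: the same $L_1$-based stopping time $\tau_m(S)$, the same a priori bound $\|u_m\|_{\cH_p^{1-\kappa}(\tau_m(S)\wedge t)}^p\le N\|u_0\|_{U_p^{1-\kappa}}^p+N(S)\|u_m\|_{\bL_p(\tau_m(S)\wedge t)}^p$ closed by Theorem \ref{embedding}~\eqref{gronwall_type_ineq} and Corollary \ref{embedding_corollary}, and the same double Chebyshev conclusion via Lemma \ref{L_1_bound}. The only (harmless) deviation is in the key product estimate $\|u_m^{1+\gamma}\|_{H_p^{-\kappa}}\lesssim\|u_m\|_{L_1}^{\gamma}\|u_m\|_{L_p}$, which you obtain by duality plus the embedding $H_{p'}^{\kappa}\hookrightarrow L_r$, whereas the paper uses the Bessel kernel representation $R_\kappa$ of Remark \ref{Kernel} together with H\"older's and Minkowski's inequalities — both hinge on exactly the condition $\kappa>(\lambda d)\vee(\lambda_0 d)$.
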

\begin{proof}
For $m,S>0$, define
\begin{equation*}
\tau_m(S) :=\inf\{ t\geq 0:\| u_m(t,\cdot) \|_{L_1}\geq S \}\wedge T.
\end{equation*}
Due to \eqref{L_1_bound_small_lambda},  $\tau_m(S)$ is a well-defined stopping time. 
For $t\in(0,T)$, by Theorem \ref{theorem_nonlinear_case}, Lemma \ref{prop_of_bessel_space} \eqref{bounded_operator}, Remark \ref{Kernel}, H\"older's inequality, and Minkowski's inequality, we have
\begin{equation}
\label{main_computation_Non_explosion_small_lambda}
\begin{aligned}
&\| u_m \|^p_{\cH_{p}^{1-\kappa}(\tau_m(S)\wedge t)} - N\| u_0 \|^p_{U_p^{1-\kappa}}\\
&\leq N\left\| \bar b^i \left( u_m^{1+\lambda } \right)_{x^i} \right\|_{\bH_{p}^{-1-\kappa}(\tau_m(S)\wedge t)}^p + N\left\| \mu u_m^{1+\lambda_0} \right\|_{\bH_{p}^{-\kappa}(\tau_m(S)\wedge t,\ell_2)}^p \\
&\leq N\left\|  u_m^{1+\lambda } \right\|_{\bH_{p}^{-\kappa}(\tau_m(S)\wedge t)}^p + N\left\| u_m^{1+\lambda_0} \right\|_{\bH_{p}^{-\kappa}(\tau_m(S)\wedge t)}^p \\
&\leq  N\bE\int_0^{\tau_m(S)\wedge t}\int_{\bR^d} \left( \int_{\bR^d} R_{\kappa}(x-y)|u_m(s,y)|^{1+\lambda} dy \right)^p dxds\\
&\quad +  N \bE\int_0^{\tau_m(S)\wedge t}\int_{\bR^d} \left( \int_{\bR^d} R_{\kappa}(x-y)|u_m(s,y)|^{1+\lambda_0} dy \right)^p dxds\\
&\leq N\bE\int_0^{\tau_m(S)\wedge t} \|u_m(s,\cdot)\|^{p\lambda} \int_{\bR^d} \left( \int_{\bR^d} |R_{\kappa}(x-y)|^{\frac{1}{1-\lambda}}|u_m(s,y)|^{\frac{1}{1-\lambda}} dy \right)^{p(1-\lambda)} dxds \\
&\quad + N\bE\int_0^{\tau_m(S)\wedge t} \|u_m(s,\cdot)\|^{p\lambda_0} \int_{\bR^d} \left( \int_{\bR^d} |R_{\kappa}(x-y)|^{\frac{1}{1-\lambda_0}}|u_m(s,y)|^{\frac{1}{1-\lambda_0}} dy \right)^{p(1-\lambda_0)} dxds \\
&\leq N\bE\int_0^{\tau_m(S)\wedge t}\left(  \| u_m(s,\cdot) \|_{L_1}^{p\lambda}\| R_{\kappa} \|_{L_{\frac{1}{1-\lambda}}}^p + \| u_m(s,\cdot) \|_{L_1}^{p\lambda_0}\| R_{\kappa} \|_{L_{\frac{1}{1-\lambda_0}}}^p \right)\int_{\bR^d}|u_m(s,x)|^p dxds, \\
&\leq N\left(  S^{p\lambda}\| R_{\kappa} \|_{L_{\frac{1}{1-\lambda}}}^p + S^{p\lambda_0}\| R_{\kappa} \|_{L_{\frac{1}{1-\lambda_0}}}^p \right)\bE\int_0^{\tau_m(S)\wedge t}\int_{\bR^d}|u_m(s,x)|^p dxds,
\end{aligned}
\end{equation}
where $N = N(p,\kappa,K,T)$. Due to Remark \ref{Kernel} and $\kappa > \lambda d\vee \lambda_0 d$, we have $\| R_{\kappa} \|_{L_{\frac{1}{1-\lambda }}} + \| R_{\kappa} \|_{L_{\frac{1}{1-\lambda_0}}} < \infty$. Thus,
\begin{equation*}
\| u_m \|^p_{\cH_{p}^{1-\kappa}(\tau_m(S)\wedge t)} \leq N_1\| u_0 \|^p_{U_p^{1-\kappa}} + N_2\| u_m \|_{\bL_p(\tau_m(S)\wedge t)}^p,
\end{equation*}
where $N_1 = N_1(p,\kappa,K,T)$ and  $N_2 = N_2(S,\lambda ,\lambda_0,p,\kappa,K,T)$. By Corollary \ref{embedding_corollary} and Theorem \ref{embedding} \eqref{gronwall_type_ineq}, we have
\begin{equation*}
\bE\sup_{t\leq \tau_m(S)\wedge T,x\in\bR^d}|u_m(s,x)|^p \leq N\| u_m \|^p_{\cH_{p}^{1-\kappa}(\tau_m(S)\wedge T)} \leq N  \|u_0\|_{U_p^{1-\kappa}}^p,
\end{equation*}
where $N = N(S,\lambda ,\lambda_0,p,\kappa,K,T)$. 

By \eqref{L_1_bound_small_lambda} and Chebyshev's inequality, we have
\begin{equation*}
\begin{aligned}
P\left( \sup_{t\leq T}\| u_m(t,\cdot) \|_{L_1}\geq S \right) \leq \frac{1}{\sqrt S}\bE\sup_{t\leq T}\| u_m(t,\cdot) \|_{L_1}^{1/2}  \leq \frac{N}{\sqrt S},
\end{aligned}
\end{equation*}
where $N = N(K,T)$.
Therefore, Chebyshev inequality yields that
\begin{equation*}
\begin{aligned}
&P\left(\sup_{t\leq T,x\in\bR^d}|u_m(t,x)| > R\right) \\
&\quad\leq P\left(\sup_{t\leq \tau_m(S)\wedge T,x\in\bR^d}|u_m(t,x)| > R\right) + P(\tau_m(S)<T)  \\
&\quad\leq P\left(\sup_{t\leq \tau_m(S) \wedge T,x\in\bR^d}|u_m(t,x)| > R\right) + P\left( \sup_{t\leq T}\| u_m(t,\cdot) \|_{L_1}\geq S \right)\\
&\quad\leq \frac{N_1}{R^p} + \frac{N_2}{\sqrt S},
\end{aligned}
\end{equation*}
where $N_1 = N_1(S,\lambda ,\lambda_0,p,\kappa,K,T)$ and $N_2 = N_2(K,T)$. By taking the supremum with respect to $m$ and letting $R\to\infty$ and $S\to\infty$ in order, we obtain \eqref{stopping_time_blow_up}. The lemma is proved.
\end{proof}

\begin{proof}[\bf{Proof of Theorem \ref{theorem_burgers_eq_white_noise_in_time_small_lambda}}]

{\it Step 1. (Uniqueness).}  Follow  {\it Step 1} in the proof of Theorem \ref{theorem_burgers_eq_white_noise_in_time_large_lambda}. The only difference is the employment of $\cH_{p,loc}^{1-\kappa}$ as a solution space instead of $\cH_{p,loc}^{1}$.

{\it Step 2. (Existence).} The motivation for the proof follows from \cite[Section 8.4]{kry1999analytic} and \cite[Theorem 2.11]{han2019boundary}.
Let $\kappa\in((\lambda d)\vee(\lambda_0 d),1)$ and $T\in(0,\infty)$. According to Lemma \ref{cut_off_lemma_small_lambda}, there exists nonnegative $u_m\in\cH_{p}^{1-\kappa}(T)$ satisfying equation \eqref{cut_off_equation_small_lambda}. By Corollary \ref{embedding_corollary}, we have $u_m\in C([0,T];C(\bR^d))$ (a.s.) and
\begin{equation*}
\bE\sup_{t\leq T}\sup_{x\in\bR^d}|u_m(t,x)| < \infty
\end{equation*}
For $R\in\{ 1,2,\dots,m-1 \}$, define
\begin{equation} \label{stopping_time_taumm_1}
\tau_m^R 
:= \inf\left\{ t\geq0 : \sup_{x\in\bR^d} |u_m(t,x)|\geq R \right\}
\end{equation}
Then, $\tau_m^R$ is a well-defined stopping time. It should be noted that $\tau_R^R \leq \tau_m^m$. Indeed, since $\sup\limits_{x\in\bR^d}|u_m(t,x)|\leq R$ for $t\leq \tau_m^R$,  we have $u_m\wedge m=u_m\wedge m\wedge R  = u_m \wedge R$ for $t\leq\tau_m^R$.  Thus, both $u_m$ and $u_R$ satisfy
\begin{equation*}
du = \left(a^{ij}u_{x^ix^j} + b^i u_{x^i} + cu +  \bar{b}^i\left(u^{1+\lambda}h_R(u) \right)_{x^i}  \right) dt + \mu^k u^{1+\lambda_0}h_R(u)   dw_t^k, \quad0<t\leq\tau_m^R
\end{equation*}
with initial data $u(0,\cdot)=u_0$. On the other hand, since $R < m$, $u_R\wedge R = u_R\wedge R\wedge m = u_R\wedge m$ for $t\leq \tau_R^R$. Then $u_m$ and $u_R$ satisfy
\begin{equation*}
du = \left(a^{ij}u_{x^ix^j} + b^i u_{x^i} + cu +  \bar{b}^i\left(u^{1+\lambda}h_R(u) \right)_{x^i}  \right) dt + \mu^k u^{1+\lambda_0}h_R(u)   dw_t^k, \quad0<t\leq\tau_R^R
\end{equation*}
with initial data $u(0,\cdot)=u_0$. Thus, by the uniqueness result in Lemma \ref{cut_off_lemma_small_lambda}, $u_m=u_R$ in $\cH_p^{1-\kappa}\left((\tau_m^R\vee\tau_R^R)\wedge T\right)$. Note that $\tau_R^R=\tau_m^R\leq\tau_m^m$ (a.s.).  Indeed, for $t<\tau_m^R$, 
\begin{equation*}
\sup_{s\leq t}\sup_{x\in\bR^d}|u_R(s,x)|=\sup_{s\leq t}\sup_{x\in\bR^d}|u_m(s,x)|\leq R,
\end{equation*}
which implies  $\tau_m^R\leq\tau_R^R$. Similarly, we have $\tau_m^R\geq\tau_R^R$. In addition, since $m > R$, we have $\tau_m^m \geq \tau_m^R$. Therefore, we have $ \tau_R^R \leq \tau_m^m $. Now, by Lemma \ref{Non_explosion_small_lambda}, 
\begin{equation*}
\begin{aligned}
\limsup_{m\to\infty}  P\left( \tau_m < T \right) &= \limsup_{m\to\infty} P\left(\sup_{t\leq T,x\in\bR^d}|u_m(t,x)|\geq m\right) \\
&\leq \limsup_{m\to\infty}\sup_{n} P\left(\sup_{t\leq T,x\in\bR^d}|u_n(t,x)|\geq m\right)\to 0,
\end{aligned}
\end{equation*}
which implies $\tau_m\to \infty$ in probability. Since $\tau_m$ is increasing,  we conclude that $\tau_m\uparrow \infty$ (a.s.). Define $\tau_m := \tau_m^m\wedge m$ and
\begin{equation*}
u(t,x):=u_m(t,x)\quad \text{for}~ t\in[0,\tau_m].
\end{equation*}
Note that $u$ satisfies \eqref{burger's_eq_white_noise_in_time_small} for all $t \leq \tau_m$ because $|u(t)| = |u_m(t)|\leq m$ for $t\leq\tau_m$. Since $u = u_m \in \cH_p^{1-\kappa}(\tau_m)$ for any $m$, we have $u\in\cH_{p,loc}^{1-\kappa}$.

{\it Step 3. (H\"older regularity). } Let $T<\infty$. Then, since $u\in \cH_p^{1-\kappa}(T)$, Corollary \ref{embedding_corollary} implies \eqref{holder_regularity_main_theorem_small_lambda}. The theorem is proved.

\end{proof}

\begin{proof}[\bf{Proof of Theorem \ref{uniqueness_in_p_2}}]
The proof is similar to that for Theorem \ref{uniqueness_in_p_1}. The only difference is to employ $\cH_q^{1-\kappa}$ instead of $\cH_q^1$. Additionally, we can use Corollary \ref{embedding_corollary} since $q>p>\frac{d+2}{1-\kappa}$. The theorem is proved.
\end{proof}

\vspace{2mm}

\bibliographystyle{plain}

\end{document}